\numberwithin{equation}{section}
\theoremstyle{plain}
\newtheorem{thm}{Theorem}[section]
\newtheorem*{thm*}{Theorem}
\newtheorem{conj}{CONJECTURE}[section]
\newtheorem{lem}[thm]{Lemma}
\newtheorem{prop}[thm]{Proposition}
\newtheorem{cor}[thm]{Corollary}
\newtheorem{defn-exmp}[thm]{Definition-Example}
\theoremstyle{definition}
\newtheorem{defn}[thm]{Definition}
\newtheorem{defn-prop}[thm]{Definition-Proposition}
\theoremstyle{remark}
\newtheorem*{rem}{Remark}
\newcommand{\quatrematrix}[4]{\begin{pmatrix}
#1 & #2  \\
#3 & #4
\end{pmatrix}}
\renewcommand{\refeq}[1]{(\ref{#1})}
\newcommand{\Z}{\mathbb{Z}}
\newcommand{\C}{\mathbb{C}}
\newcommand{\Res}{\operatorname{Res}}
\newcommand{\End}{\operatorname{End}}
\newcommand{\Hom}{\operatorname{Hom}}
\newcommand{\Span}{\operatorname{Span}}
\newcommand{\Id}{\operatorname{Id}}
\newcommand{\Rep}{\operatorname{Rep}}
\newcommand{\Repf}{\operatorname{Rep}_{\operatorname{f}}}
\newcommand{\smod}{\textrm{-}\operatorname{mod}}
\renewcommand{\H}{\mathcal{H}}
\newcommand{\St}{\operatorname{St}}
\newcommand{\Ind}{\operatorname{Ind}}
\newcommand{\ind}{\operatorname{ind}}
\newcommand{\res}{\operatorname{res}}
\newcommand{\Waff}{W_{\operatorname{aff}}}
\newcommand{\Saff}{S_{\operatorname{aff}}}
\newcommand{\R}{\mathcal{R}}
\newcommand{\WR}{W(\R)}
\newcommand{\NI}{\operatorname{i}}
\newcommand{\Nr}{\operatorname{r}}
\newcommand{\ND}{\operatorname{D}}
\newcommand{\GL}{\mathrm{GL}}
\newcommand{\SL}{\mathrm{SL}}
\newcommand{\SO}{\mathrm{SO}}
\newcommand*{\Unitary}{\ {}^{\scriptsize\textcolor{green!50!black}{(U)}}}
\title[The Aubert-Zelevinsky involution for $G_2$ and its associated Hecke algebras]{The Aubert-Zelevinsky involution for $G_2$ and its associated Hecke algebras}
\author{Chuan Qin}
\begin{document}
\begin{abstract}
Motivated by the recent work of Aubert-Xu and the techniques in G. Muic's
article, we provide examples of computations of the Aubert-Zelevinsky duality
functor for the principal and mediate series of the exceptional group $G_2$, and
deduce corresponding results regarding the involution on the Hecke algebra side.
These computations also allow us to confirm several instances of the Bernstein
conjecture for $G_2$. This article is developed from part of the author's PhD thesis.
\end{abstract}

\maketitle
\tableofcontents
\section{Introduction}
 Let $\mathbb{G}$
be the exceptional group of type $G_2$, and $G = \mathbb{G} (F)$ the
$F$-rational points. Let $T$ be a maximal $F$-split
torus of $G_2$ and $B= TU$ be a Borel subgroup. In this article, we compute the restriction of the Aubert–Zelevinsky duality functor $\ND_G$ to the subcategory $\Rep_{\mathrm{f}}^{\mathfrak{s}}(G)$  formed by finite length representations of the \emph{Bernstein block} $\Rep^{\mathfrak{s}}(G)$  (\emph{i.e.} consisting of the representations whose composition factors are subquotients of a parabolically induced representation $\NI_P^G (\sigma)$, with $\mathfrak{s} = [L, \sigma]_G$  indexed by a proper Levi subgroup $L \subset P$ of $G$ and a supercuspidal representation $\sigma$ of $L$). After finding a progenerator $\Sigma$ of $\Rep^{\mathfrak{s}_L}(L) $ ($ \mathfrak{s}_L = [L, \sigma]_L$), the equivalence of categories also yields an involution on the category of finite-dimensional right modules over the endomorphism algebra $\H^{\mathfrak{s}}(G) := \End_G(\NI^G_P(\Sigma))$ (it can be proved to be a Hecke algebra).

We adopt the notation of indexing triples and standard modules from \cite{KazhdanLusztig1987}. We provide explicit computations of the Aubert–Zelevinsky duality functor for the principal and intermediate series of the exceptional group $G_2$, and deduce the corresponding involution on the Hecke algebra side. These computations also allow us to verify numerous cases of the Bernstein conjecture for $G_2$.

Sections \ref{subsec:casebycase} to \ref{subsec:case3last} present detailed case-by-case analyses following the approach first developed in \cite{Muic1997}, which we extend to obtain further results. A summary of these results is given in Section \ref{subsec:Main}.

\subsection{Main results}\label{subsec:Main}
We list the main results in the following tables, the unitarizable ones are
labeled by $\Unitary$:

\begin{table}[H]
    \centering
    \begin{tabular}{|c|c|c|}
        \hline
        $\mathfrak{s} = [M_{\alpha}, \sigma]$ & $\Rep^s (G_2)$ & $\H^{\mathfrak{s}} \smod $ \\ \hline
        $(-)$ & $\pi (\sigma) \Unitary$ & $M_{t_a, e_{\alpha_1}, 1}$ \\ \hline
        $D(-)$ & $-J(\sigma{}) \Unitary$  & $-M_{t_a,0,1}$ \\ \hline
    \end{tabular}
     \caption{}
    \label{tab:1}
\end{table}

\begin{table}[H]
    \centering
    \begin{tabular}{|c|c|c|}
        \hline
        $\mathfrak{s} = [M_{\beta}, \sigma \otimes \tilde{\beta}]$ & $\Rep^s (G_2)$ & $\H^{\mathfrak{s}} \smod $ \\ \hline
        $(-)$ & $\pi (\sigma) \Unitary$ & $M_{t_a, e_{\alpha_1}, 1}$ \\ \hline
        $D(-)$ & $-J(\sigma{}) \Unitary$  & $-M_{t_a,0,1}$ \\ \hline
    \end{tabular}
     \caption{}
    \label{tab:2}
\end{table}

  \begin{table}[H]
    \centering
{\renewcommand\arraystretch{1.25}
  \resizebox{0.5\textwidth}{!}{\begin{tabular}{|c|c|c|}
\hline
   $\mathfrak{s}=[T, \xi \otimes 1]_G$ & \multicolumn{2}{|c|}{Case (3) $\xi$ not quadratic, $\mathfrak{J}^{\mathfrak{s}} =\GL_2(\C)$}  \\ \hline
      &  $\Rep^s (G_2)$ &  $\H^{\mathfrak{s}} \smod$ \\ \hline
    $(-) $ &  $I_\alpha\left( \delta (\nu^{\pm 1/2} \xi_2)
\right)$ & $M_{t_a, e_{\alpha},1}$\\ \hline
      $D (-)$ &  $I_\alpha\left( \nu^{\pm 1/2} \xi_2 \circ \operatorname{det}\right)$  & $M_{t_a,
    0, 1}$  \\ \hline
    \end{tabular}} }
   \caption{}
    \label{tab:5}
\end{table}

\begin{table}[H]
{\renewcommand\arraystretch{1.25}
    \centering
    \resizebox{\textwidth}{!}{ \begin{tabular}{|c|c|c|c|c|c|c|}
        \hline
      $\mathfrak{s} = [T, \xi \otimes \xi]$ & \multicolumn{2}{p{4cm}|}{Case (2) with $\xi_2$ ramified cubic, $\mathfrak{J}^{\mathfrak{s}} = \SL_3 (\C)$} & \multicolumn{2}{p{4cm}|}{Case (2) with $\xi_2$ ramified noncubic, $\mathfrak{J}^{\mathfrak{s}} = \GL_2 (\C)$}  & \multicolumn{2}{p{4cm}|}{$\xi_2$ is unramified, $s_2 \neq \pm 1$, $\mathfrak{J}^{\mathfrak{s}}= G_2 (\C)$} \\ \hline
      & $\Rep^s (G_2)$ & $\H^{\mathfrak{s}} \smod$ & $\Rep^s (G_2)$ & $\H^{\mathfrak{s}} \smod$ & $\Rep^s (G_2)$ & $\H^{\mathfrak{s}} \smod$  \\ \hline
      $(-)$    & $\pi (\nu^{\mp 1} \otimes \xi_2)$   & $M_{t_b, e_{\alpha_2},1} $ & $\pi (\nu^{\mp 1} \otimes \xi_2)$ & $M_{t_a, e_{\alpha_1},1}$&  $\pi (\nu^{\mp 1} \otimes \xi_2)$ & $M_{t_g, e_{\alpha_1},1}$ \\ \hline
       $D (-)$ &  $J(\nu^{\mp 1} \otimes \xi_2)$     & $M_{t_b,
    0, 1}$ &  $J(\nu^{\mp 1} \otimes \xi_2)$  & $M_{t_a,
    0, 1} $ & $J(\nu^{\mp 1} \otimes \xi_2)$ & $M_{t_g,
    0, 1} $ \\ \hline
    \end{tabular}} }
  \caption{}
    \label{tab:3}
  \end{table}

\begin{table}[H]
{\renewcommand\arraystretch{1.25}
  \centering
  \resizebox{1.1\textwidth}{!}{\begin{tabular}{|c|c|c|c|c|c|c|c|}
\hline
\multicolumn{4}{|c|}{ Case (3) with $\chi$ ramified quadratic, $\mathfrak{J}^{\mathfrak{s}}
=\SO_4(\C)$}   & \multicolumn{4}{c|}{Case (3) with $\chi$ ramified cubic, $\mathfrak{J}^{\mathfrak{s}}
=\SL_3(\C)$}  \\ \hline
       $\Rep^s (G_2)$ & $\H^{\mathfrak{s}} \smod$ & $\Rep^s (G_2)$ & $\H^{\mathfrak{s}} \smod$ & $\Rep^s (G_2)$ & $\H^{\mathfrak{s}} \smod$ &  $\Rep^s (G_2)$ &  $\H^{\mathfrak{s}} \smod$ \\ \hline
      $\pi (\chi) \Unitary $   & $M_{(t_a, e_{\alpha_1}, 1), (t_a, e_{\alpha_1}, 1)} $ & $J_{\alpha}
 (1/2, \delta(\chi)) \Unitary$ & $M_{t_a, e_{\alpha_1},1}$&  $\pi (\chi) \Unitary $ & $M_{t_a, e_{\alpha^{\vee}}+e_{2 \alpha^{\vee} + 3 \beta^{\vee}}, 1}$ & $J_{\alpha} (1/2, \delta(\chi))$ & $M_{t_a, e_{\alpha^{\vee}}, 1} $\\ \hline
       $J_{\beta} (1, \pi (1, \chi)) \Unitary$     & $M_{(t_a, 0, 1), (t_a, 0, 1)}$ &  $J_{\beta} (1/2, \delta (\chi)) \Unitary $  & $M_{(t_a, e_{\alpha_1}, 1), (t_a, 0, 1)}  $ & $J_{\beta} (1, \pi (\chi^{-1}, \chi^{-1})) \Unitary$ & $M_{t_a, 0,1} $ & $J_{\alpha} (1/2, \delta (\chi^{-1}))$ & $M_{t_a, e_{3 \beta^{\vee}+2 \alpha^{\vee}}, 1}$ \\ \hline
    \end{tabular}} }
   \caption*{ \ref{tab:3} Continued}
    \label{tab:3continued}
\end{table}

\begin{table}[H]
{\renewcommand\arraystretch{1.25}
    \centering
    \resizebox{1.1\textwidth}{!}{ \begin{tabular}{|c|c|c|c|c|c|c|}
        \hline
      $\mathfrak{s} = [T,    1]$ & \multicolumn{6}{|c|}{Case (2) with $\chi =1$, $s = 1/2$, $\mathfrak{J}^{\mathfrak{s}} = G_2 (\C)$} \\ \hline
      & $\Rep^s (G_2)$ & $\H^{\mathfrak{s}} \smod$ & $\Rep^s (G_2)$ & $\H^{\mathfrak{s}} \smod$ & $\Rep^s (G_2)$ & $\H^{\mathfrak{s}} \smod$  \\ \hline
      $(-)$    & $\pi (1) \Unitary $   & $M_{t_e, e_{\alpha^{\vee}} + e_{\alpha^{\vee}} + 2 \beta^{\vee}, (21)} $ & $J_{\beta} (1/2, \delta (1)) \Unitary $ & $M_{t_e, e_{\alpha^{\vee} + \beta^{\vee}}, 1}$&  $\pi ' (1) \Unitary $ & $M_{t_e, e_{\alpha^{\vee}} + e_{\alpha^{\vee}} + 2 \beta^{\vee}, (3)}$ \\ \hline
       $D (-)$ &  $J_{\alpha} (1/2, \delta (1)) \Unitary $     & $M_{t_e, e_{\alpha^{\vee}}, 1}$ &  $J_{\beta} (1/2, \delta (1)) \Unitary$  & $M_{t_e, e_{\alpha^{\vee} + \beta^{\vee}}, 1}  $ & $ J_{\beta} (1, \pi (1,1)) \Unitary $ & $M_{t_e, 0, 1} $ \\ \hline
    \end{tabular}} }
  \caption{}
    \label{tab:4}
  \end{table}

\begin{table}[H]
{\renewcommand\arraystretch{1.25}
  \centering
  \resizebox{\textwidth}{!}{\begin{tabular}{|c|c|c|c|c|c|}
\hline
\multicolumn{4}{|c|}{Case (2) with $s=3/2$ and $\chi=1$ , $\mathfrak{J}^{\mathfrak{s}} = G_2(\C)$}   & \multicolumn{2}{c|}{Case (3) with $\xi_2$ unramified,  $\mathfrak{J}^{\mathfrak{s}} = G_2(\C)$}  \\ \hline
       $\Rep^s (G_2)$ & $\H^{\mathfrak{s}} \smod$ & $\Rep^s (G_2)$ & $\H^{\mathfrak{s}} \smod$ & $\Rep^s (G_2)$ & $\H^{\mathfrak{s}} \smod$ \\ \hline
      $\St_{G_2} \Unitary $   & $M_{{t_a, e_{\alpha^{\vee}}+ e_{\beta^{\vee}} , 1 }} $ & $J_{\alpha} (3/2, \delta (1))$ & $M_{t_a, e_{\alpha^{\vee}}, 1}$&  $I_\alpha\left( \delta (\nu^{\pm 1/2} \xi_2)
\right)$ & $M_{t_g, e_{\alpha},1}$ \\ \hline
       $1_{G_2} \Unitary $     & $ M_{t_a, 0, 1}$ &  $J_{\beta} (5/2, \delta (1)) $  & $M_{t_a, e_{\beta^{\vee}}, 1} $ & $I_\alpha\left( \nu^{\pm 1/2} \xi_2 \circ \operatorname{det}\right)$ & $ M_{t_g,
    0, 1} $ \\ \hline
    \end{tabular}} }
   \caption*{\ref{tab:4} Continued}
    \label{tab:4continued1}
\end{table}

\begin{table}[H]
{\renewcommand\arraystretch{1.25}
  \centering
  \resizebox{1.1\textwidth}{!}{\begin{tabular}{|c|c|c|c|c|c|c|c|}
\hline
\multicolumn{4}{|c|}{ Case (3) with $\chi$ unramified
quadratic,  $\mathfrak{J}^{\mathfrak{s}}
=G_2(\C)$}   & \multicolumn{4}{c|}{Case (3) with $s=1/2$ and $\chi$ cubic,  $\mathfrak{J}^{\mathfrak{s}}
=G_2(\C)$}  \\ \hline
       $\Rep^s (G_2)$ & $\H^{\mathfrak{s}} \smod$ & $\Rep^s (G_2)$ & $\H^{\mathfrak{s}} \smod$ & $\Rep^s (G_2)$ & $\H^{\mathfrak{s}} \smod$ & $\Rep^s (G_2)$ & $\H^{\mathfrak{s}} \smod$ \\ \hline
      $ \pi (\chi) \Unitary $   & $M_{t_d, e_{\alpha^{\vee}}+e_{\alpha^{\vee} + 2 \beta^{\vee}}, 1}$ & $J_{\alpha}
 (1/2, \delta(\chi)) \Unitary $ & $M_{t_d, e_{\alpha^{\vee}}, 1}$&  $\pi (\chi) \Unitary $ & $M_{t_c, e_{\alpha^{\vee}}+e_{ \alpha^{\vee} + 3 \beta^{\vee}}, 1}$ & $J_{\alpha} (1/2, \delta(\chi))$ & $ M_{t_c, e_{\alpha^{\vee}}, 1}$\\ \hline
       $J_{\beta} (1, \pi (1, \chi)) \Unitary $     & $M_{t_d, 0,1}$ &  $J_{\beta} (1/2, \delta (\chi)) \Unitary $  & $M_{t_d, e_{2\beta^{\vee}+\alpha^{\vee}}, 1}  $ & $J_{\beta} (1, \pi (\chi^{-1}, \chi^{-1})) \Unitary $ & $M_{t_c, 0,1} $ & $ J_{\alpha} (1/2, \delta (\chi^{-1}))$ & $M_{t_c, e_{3 \beta^{\vee}+ \alpha^{\vee}}, 1}$ \\ \hline
    \end{tabular}} }
   \caption*{\ref{tab:4} Continued II}
    \label{tab:4continued2}
\end{table}
\subsection*{Acknowledgements.} The author thanks Anne-Marie Aubert for her
detailed explanations of \cite{AubertXu2023explicit} and
\cite{AubertXu2022Hecke}, as well as for many helpful suggestions. Gratitude is
also extended to Chi-Heng Lo for pointing out the methods in \cite{HJLLZ2024} can be applied to unitarity problems for $G_2$ and to Noriyuki Abe for identifying an ambiguity in the original proof of Proposition \ref{prop:Muic4.2}. The author also thanks Paul Boisseau, David Renard, Yi Shan, and Zhixiang Wu for valuable comments and discussions. Finally, the author gratefully acknowledges the research environment provided by Sorbonne University and the ITS at Westlake University, as well as research funding support from Professor Huayi Chen.

\section{Notation and preliminaries}\label{sec:padicgrppreliminaries}
\subsection{Representations of $p$-adic groups}
Let $H \subset G$ be a subgroup and $(\rho, V_{\rho})$ a smooth representation of $H$, we write $(\ind_{H}^G(\rho),
\ind_H^G (V_{\rho}))$ for the compactly induced representation. For $ g \in G$, ${}^g (\textrm{-})$ denote the action
$g (\textrm{-} )g^{-1}$ on $G$, ${}^g \rho$ denote the representation $\rho
({}^{g^{-1}} (\textrm{-}))$. From now on until the
end of this paragraph, we assume
$H$ is open in $G$. For a smooth $(\pi,V)$ representation of $G$,
we write $(\res^G_H (\pi), \res^G_H(V))$ for the restriction functor. We have an
adjoint pair $(\ind_H^G, \res_H^G)$ in the sense that there exists a natural
equivalence $\Hom_G (\ind_H^G (\rho), -) \cong \Hom_H (\rho ,  \res^G_H (-))$.

Let us fix a Borel subgroup $B$ of $G$ and a maximal $F$-split
torus $T$ contained in $B$. For any standard subgroup $P$, we have explicitly $P = LU$ where $U$ is the unipotent radical and we also write
the opposite Levi by $\overline{P} = L \overline{U}$. Let $\mathcal{P}(L)$
denote the set of parabolic subgroups with Levi component $L$. Let $W (G,T):=N_G (T)/T$
denote the (finite)
Weyl group of $G$, $R$ the set of roots and $S$ the set of simple roots
determined by the choice of $B$. For $I \subset S$, let $P_I$ denote the
standard parabolic $F$-subgroup of $G$ determined by $I$, and $L_I$ the Levi
subgroup of $P_I$. We write
$\widehat{T}$ for the complex torus dual to $T$, $\widehat{G}$ for the Langlands
dual group of $G$, $W_F$ for the Weil group of
$F$, $I_F$ for the inertial group of $F$.

For a smooth  $(\rho, V_{\rho})$ representation of a Levi subgroup $L \subset P=LU$, $\rho$ extends to a
representation of $P$ by letting $U$ act trivially. Let $(\Ind_P^G (\rho),
\Ind_P^G (V_{\rho}))$ denote the un-normalized induction. For a smooth
representation $(\pi, V)$ of $G$, let $(\pi_U, V_U)$ denote the un-normalized
Jaquet module of $(\pi, V)$ and $j_U: V \rightarrow V_U$ denote the quotient map. We denote the normalized
induction and normalized restriction (Jacquet module) of $(\rho, V_{\rho})$ by
$(\NI_P^G (\rho), \NI_P^G (V_{\rho}) )$ and $(\Nr_P^G (\rho), \Nr_P^G
(V_{\rho}))$ (or $(\Nr_U(\rho), \Nr_U(V_{\rho}))$) respectively. We have two
adjoint pairs: $(r_U, \NI_P^G)$, where there exists a natural equivalence $\Hom_G ( - , \NI_P^G (\rho)) \cong \Hom_L (r_U (-),
\rho)$ and $(\NI_P^G, r_{\overline{U}})$ with  $\Hom_G ( \NI_P^G (\rho) , - ) \cong \Hom_L (\rho,
 r_{\overline{U}}(-))$ (the second adjoint theorem, see \cite[VI.9]{Renard2010}).

Let $M$ be a Levi subgroup of $G$, $\Rep (M)$ denote the category of all smooth
complex representations of $M$,  $\Repf (M)$ denote the subcategory of finite
length representations, and $\operatorname{Irr}(M)$ denote the set of isomorphism classes of smooth
irreducible representations. Let $\mathfrak{R}(G)$ be the Grothendieck group of
$\Repf (G)$ and for $\pi \in \Repf (G)$, denote by $[\pi]$ its image in $\mathfrak{R}(G)$.

\subsection{The Aubert-Zelevinsky duality}\label{AZduality}
  For $\pi \in \Repf (G)$ we follow \cite{Aubert1995}, consider the map
\begin{equation}\label{eq:AZDuality}
  \begin{aligned}
    \ND_G: \mathfrak{R}(G) & \rightarrow  \mathfrak{R}(G)  \\
                      [\pi]   &  \mapsto [\sum_{I \subset S} (-1)^{|I|} \NI_{P_I}^{G} \circ \Nr^G_{P_I} (\pi) ].
  \end{aligned}
\end{equation}
It satisfies the following basic properties which are proved by \cite[Theorem 1.7]{Aubert1995}:

  \begin{enumerate}[(1)]
  \item We have $\ND_{G} ((-)^{\vee}) =  (\ND_G(-))^{\vee}$ where $^{\vee}$ means taking contragredient.
    \item For any subset $J \subset S$, $\ND_G \circ \NI_{P_J}^{G} = \NI_{P_J}^G
      \circ \ND_{L_J}$.
    \item $\ND_G^2 = \Id$.
    \item If $\pi$ is irreducible cuspidal, then $\ND_G (\pi) = (-1)^{|S|} \pi$.
    \end{enumerate}

\subsection{The Bernstein decomposition}

We write $[L, \sigma]_{G}$ for the inertial equivalence class of $(L, \sigma)$
(sometimes omitting the sub-index if no confusion is caused) and
$\mathfrak{B}(G)$ for the set of all \emph{inertial equivalence classes} where the equivalence relation is given by  $(L_1 , \rho_1) \sim (L_2, \rho_2)$ if there exists $g \in G$, such that $gL_1g^{-1} = L_2$ and $\rho_2 = \rho_1^{g} \otimes \omega $ for some unramified character $\omega$ of $L_2$). If $M$ is a
proper Levi subgroup of $G$, $L$ is a Levi subgroup of $M$ and $\sigma$ is a
supercuspidal representation of $L$, then such $\mathfrak{s}_M = [L, \sigma]_M \in
\mathfrak{B}(M)$ determines a $\mathfrak{s}_G=[L,\sigma]_G \in
\mathfrak{B}(G)$ naturally. If $(\pi ,
V)$ is an irreducible smooth representation of $G$, one defines the \emph{inertial support}
$\mathfrak{I}\left(\pi \right)$ of $(\pi, V)$ to be the inertial equivalence class of the
supercuspidal support of $\pi$. The subcategories $ \Rep^{\mathfrak{s}}(G) , \ \mathfrak{s} \in \mathfrak{B}(G)$
split the category $\Rep (G)$, and we have the Bernstein decomposition of
$\Rep (G)$ (see \cite{Bernstein1984}):
\begin{equation}
  \label{eq:BernsteinDecomposition}
  \Rep (G)=\prod_{\mathfrak{s} \in \mathfrak{B}(G)} \Rep^{\mathfrak{s}}(G)
\end{equation}
of the subcategories $\Rep^{\mathfrak{s}}(G)$ as $\mathfrak{s}$ ranges through
$\mathfrak{B}(G)$. Let
\begin{equation}
  \label{eq:prpadic}
  \operatorname{pr}_G^{\mathfrak{s}}: \Rep (G) \rightarrow
  \Rep^{\mathfrak{s}}(G)
\end{equation}
denote the projection functor.

For $\mathfrak{s}=[L, \sigma]_G \in \mathfrak{B}(G)$, following
\cite[section 3.2]{AubertXu2023explicit} we define $
\mathrm{N}_G\left(\mathfrak{s}_L\right):=\left\{g \in G:{ }^g L=L\right.$ and ${
}^g \sigma \simeq \chi \otimes \sigma$, for some $\left.\chi \in
  \mathfrak{X}_{\mathrm{nr}}(L)\right\}$, where $\mathfrak{s}_L=[L, \sigma]_L
\in \mathfrak{B}(L)$ and denote by $W_G^{\mathfrak{s}}$ the extended finite Weyl
group $\mathrm{N}_G\left(\mathfrak{s}_L\right) / L$ and $\Waff^{\mathfrak{s}}$
the corresponding affine version.

\subsection{Hecke algebras associated with Bernstein blocks}

Let $K$ be a compact open subgroup of $G$ and $(\rho, V_{\rho})$ a
representation of $K$, now recall the definition of Hecke
algebra associated with $(K, \rho)$.
\begin{defn}\label{defn:HGrho}
  Let $\H (G, \rho)$ denote the space of
compactly supported functions
\[
\phi : G \rightarrow \End_{\C} (V_{\rho})
\]
satisfying
\[
\phi (k_1 gk_2) = \rho (k_1 ) \circ \phi (g) \circ \rho (k_2)
\]
for all $k_1$, $k_2 \in K$ and $g \in G$.
For $\phi_i$, $\phi_2 \in \H(G, \rho)$, the standard convolution product
\[
(\phi_{1} * \phi_2) (x) : = \int_G \phi_1(y) \circ \phi_2 (y^{-1}x) dy
\]
gives $\H (G, \rho)$ a structure of $\C$-algebra.
\end{defn}

There exists an anti-isomorphism from $\H(G, \rho)$ to $\H(G, \rho^{\vee})$ by
sending $\phi (\textrm{-})$ to $\phi ( (\textrm{-})^{-1})^{\vee}$. Our
definition for $\H(G, \rho)$ differs from
\cite[section 7]{Roche1998} and \cite[2.4]{BushnellKutzko1998} by a
contragredient, following \emph{loc.cit} 2.6 we have an algebra isomorphism
\begin{equation}
  \label{eq:HGrEndind}
  \H (G, \rho) \cong \End_{G} (\ind_K^G (\rho)).
\end{equation}
where $\End_G  (\ind_K^G (\rho))$ denotes the right $G$-endomorphism.

\subsection{Principal blocks}\label{subsec:Principalblocks}
We recall here some results from \cite{Roche1998} section 6 to 8. Let $T^1$ be
the unique maximal compact subgroup of $T$ and $\chi: T^1 \rightarrow \C$ be a
smooth character. Let $\widetilde{\chi}$ be any character of $T$ extending
$\chi$, the inertial equivalence class $\mathfrak{s}_{\chi} : =[T,
\widetilde{\chi}]$ depends only on $\chi$ (it is well-defined). An
\emph{$\mathfrak{s}$-type} in $G$ is a pair $(K, \rho)$ where $K$ is a compact open
subgroup of $G$ and $\rho$ is an irreducible smooth representation of $K$ such
that an irreducible representation $\pi$ of $G$ contains $\rho$ if and only if
$\mathfrak{I}(\pi) = \mathfrak{s}$. In \cite{Roche1998}, A. Roche constructed an $\mathfrak{s}_{\chi}$-type $(J, \rho)$ where $J$ is a
compact open subgroup of $G$ and $\rho$ is a smooth character of $J$ such that
$J \cap T = T^1$ and $\rho|_{J \cap T} = \chi$. Recall Definition
\ref{defn:HGrho}, $ \H (G, \rho)$ is the space of complex compactly supported
functions equipped with a star operation $\kappa$ given by
\[
\kappa (f)(x) = \overline{f(x^{-1})}, \textrm{ for }f \in \H (G, \rho).
\]
 There is an equivalence of
categories $\Rep^{\mathfrak{s_{\chi}}} (G) \rightarrow \H (G, \rho) \textrm{-}
\operatorname{Mod}$ by \emph{loc.cit.} Theorem 7.5 and Corollary 7.9.

There exists a dual group interpretation of $\H (G, \rho)$. Applying the Local Langlands
correspondence for $T$, we have a homomorphism $\varphi_{\widetilde{\chi}}: W_F
\rightarrow \widehat{T}$ associated with $\widetilde{\chi}: T \rightarrow \C$. By
local class field theory, $\varphi_{\widetilde{\chi}}$ restricted to $I_F$ depends only on
$\chi$ and we denote it by $\varphi_{\chi}$. Let $\mathfrak{J}^{\mathfrak{s}}$ denote the centralizer in $\widehat{G}$ of the
image of $\varphi_{\chi}$. Since $\mathbb{G}$ has connected center, the group
$\mathfrak{J}^{\mathfrak{s}}$ is connected and its Weyl group is isomorphic to
the group $W^{\mathfrak{s}}$. Let $J_{\mathfrak{s}}$ denote the group of
$F$-rational points of the $F$-split reductive group whose Langlands dual is the
group $\mathfrak{J}^{\mathfrak{s}}$. By \cite[section 8]{Roche1998}, the algebra
$\H (G, \rho )$ and $\H (J_{\mathfrak{s}}, 1_J)$ are isomorphic via a family of
$ \kappa $-preserving, support-preserving isomorphisms.

We recall here some results from \cite{KazhdanLusztig1987} about indexing triples and standard modules for affine Hecke algebras. Let $\mathcal{G}$ be a simple complex algebraic group with root system $R$ and weight
lattice $X$. Recall that $q$ is the number of elements in the residue field of $F$. \emph{An indexing triple} $(s, n, \rho)$ consists of a semisimple element
$s \in \mathcal{G}$, a nilpotent element $n \in \operatorname{Lie}(\mathcal{G})$ such
that $\operatorname{Ad} (s) n = q n$ and an irreducible representation of the
component group $A(s, n):= Z_{\mathcal{G}} (s,n)/ Z_{\mathcal{G}}(s, n)^{\circ}$ where $Z_{\mathcal{G}} (s,n): =
Z_{\mathcal{G}}(s) \cap Z_{\mathcal{G}} (n)$. Let $n = \operatorname{exp} (n)$ denote the corresponding
unipotent element of $\mathcal{G}$. Let $K(\mathcal{B}_{s,u})$ be the $K$-theory of the
variety $\mathcal{B}_{s,u}$ where $\mathcal{B}_{s,u}$ is the variety of Borel
subgroups of $\mathcal{G}$ containing both $s$ and $u$. The group $A(s, u)$ and $\H (G, \rho )$ act
on $K (\mathcal{B}_{s,u})$ and their actions commute. The standard modules
$M_{s,n, \rho}$ are the $\H (G, \rho )$-modules in the decomposition:
\begin{equation}
  \label{eq:standardmodules}
   K( \mathcal{B}_{s,u}) = \bigoplus_{\rho \in \operatorname{Irr} (A(s,u))}
  M_{s,n, \rho} \otimes \rho .
\end{equation}

\section{Computations of the duality for Bernstein blocks of $G_2$}\label{sec:computationsG2}
\subsection{Preliminaries on $G_2$}
Let $V$ be the hyperplane of $ \mathbb{R}^3 =
\Span_{\mathbb{R}} (e_1, e_2, e_3)$
formed by points whose sum of coordinates is zero.

~\\
\begin{figure}[htbp]
  \centering
  \includegraphics[width=0.4\textwidth]{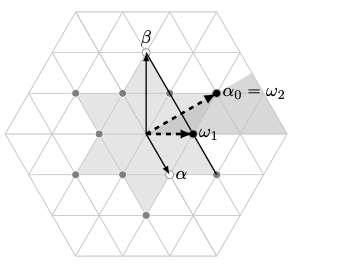}
  \label{pic:G2}
\end{figure}

The choice of simple roots $\alpha = \dfrac{1}{\sqrt{3}} (e_1-e_2)$ (\emph{the short
root}) and $ \beta =
\dfrac{1}{\sqrt{3}} (-2e_1 + e_2 + e_3)$ (\emph{the long root}) define a basis
for the root system $R$ of $G$, and $R^{+}= \{ \alpha, \beta, \alpha+\beta, 2
\alpha+\beta, 3 \alpha+\beta, 3 \alpha+2 \beta   \}$.  The dominant
weights are $\omega_1= 2\alpha +
\beta = \dfrac{1}{\sqrt{3}}(- e_2+e_3) $ and $\omega_2 =3\alpha + 2
\beta =\dfrac{1}{\sqrt{3}} (-e_1-e_2+2e_3)$. For any $\gamma \in R$, we denote
by $\gamma^{\vee}$ the corresponding coroot, and $s_{\gamma}$ the reflection in  the Weyl group $W$ of $G$ defined by $\gamma$. Let $B = TU$ be the corresponding Borel subgroup,
$P_{\gamma}$ be the maximal standard parabolic subgroup of $G$ generated by $B$
and $x_{\pm \gamma} (F)$ (defined in \cite[Section 4]{AubertXu2023explicit}).

We need to clarify some notations and definitions on Levi subgroups
of $G_2$, following in A-M. Aubert \& Y. Xu \cite{AubertXu2023explicit} and G. Muic \cite{Muic1997}, we define:
\begin{equation}\label{eq:Torusab}
  \begin{aligned}
    (t_1, t_2) \in T \cong F^{\times} \times F^{\times} & \mapsto \quatrematrix{t_{1}}{{}}{{}}{t_{2}} \in T_{\alpha} \\
    (t_1, t_2) \in T \cong F^{\times} \times F^{\times} & \mapsto \quatrematrix{t_{2}}{{}}{{}}{t_{1}t_{2}^{-1}} \in T_{\beta} \\
  \end{aligned}
\end{equation}
as the isomorphisms between the split torus of maximal Levis associated with $\{
\alpha , \beta\}$ and $T
$.

Accordingly, if $\chi_1 \otimes \chi_2$ is a character of $F^{\times} \times
F^{\times}$, then:
\begin{equation}\label{eq:Characterab}
  \begin{aligned}
    \chi_1 \otimes \chi_2  & \mapsto \quatrematrix{\chi_{1}}{{}}{{}}{\chi_{2}} \textrm{ as character of }T_{\alpha}\\
   \chi_1 \otimes \chi_2  & \mapsto \quatrematrix{\chi_1 \chi_2}{{}}{{}}{\chi_1} \textrm{ as character of }T_{\beta}
  \end{aligned}
\end{equation}
\begin{rem}
  The notations mean the following:
  \[
    \quatrematrix{\chi_{1}}{{}}{{}}{\chi_{2}}
    \quatrematrix{t_{1}}{{}}{{}}{t_{2}} = \chi_1 (t_1) \chi_2 (t_2) =  \quatrematrix{\chi_1 \chi_2}{{}}{{}}{\chi_1}\quatrematrix{t_{2}}{{}}{{}}{t_{1}t_{2}^{-1}}.
  \]
\end{rem}
The above isomorphisms between $T$ and $T_{\gamma}$ extends to isomorphisms
between $\GL_2 (F) $ and $L_{\gamma}$, for $\gamma = \alpha$ or $\beta$. The Weyl group action of characters on $F^{\times}
\times F^{\times}$ is described in \cite[Table 1]{AubertXu2023explicit}.

\subsubsection{Some definitions and abbreviations}
We will use the following fact implicitly: for a smooth representation $\sigma$ of a Levi subgroup $L \subset
P=LU$, $\sigma$ extends to a representation of $P$ by letting $U$ act trivially
(and is thus seen as a representation of $P$). We use $\nu$ to denote the normalized absolute value of $F$.
The following functors are abbreviated as follows (with $\gamma$
equals $ \alpha \textrm{ or } \beta$): the normalized parabolic induction $\NI_{T (U \cap
  L_{\gamma})}^{L_{\gamma}} $  to the intermediate Levi $L_{\gamma}$  is abbreviated as $I^{\gamma} $;  the
normalized Jacquet functor $\Nr_{P_{\gamma}}^{G_2}$ from $G_2$ to  $P_{\gamma}$  is
abbreviated as $r_{\gamma}$. The normalized parabolic induction $\NI_{B}^{G} $    is abbreviated as $I$; the
normalized Jacquet functor $\Nr_{B}^{G_2}$ from $G_2$ to  $B$  is
abbreviated as $r_{\emptyset}$.  We have
$I^{\gamma} \left( s_{\gamma}(\chi_1 \otimes
\chi_2) \right) =I^{\gamma} (\chi_1 \otimes \chi_2)$ in $\operatorname{R}(L_{\gamma})$ where
$\gamma = \alpha, \ \beta$ and $s_{\gamma}$ the corresponding simple reflection
in the Weyl group. For an admissible representation $\pi$ of
$L_{\gamma}$, $s \in \C$ and $\eta \in X_{*} (L_{\gamma}) \otimes_{\Z}
\mathbb{R}$ (where $X_{*} (L_{\gamma})$ is the group of $F$-rational characters of $L_{\gamma}$), we write
\[
I_{\gamma} (\chi, \pi) := \NI_{P_{\gamma}}^{G} ((\nu \circ \chi ) \otimes \pi ),
\]
and
\[
I_{\gamma} (s, \pi):= \NI_{P_{\gamma}}^{G} ( (\nu^{s} \circ \det) \otimes \pi ),
\ I_{\gamma} (\pi):= I_{\gamma} (0, \pi).
\]

 When $\pi$ is a tempered irreducible representation and $s \in \mathbb{R}^{>0}$,
the representation $I^{\gamma}(s, \pi)$ has a unique irreducible quotient
denoted by $J_{\gamma}(s, \pi)$ by the Langlands quotient Theorem(see
\cite[Theorem 3.5]{Konno2003}).

We end an equation with $\
[\operatorname{R}(G)]$ to mean that this is an equation in the Grothendieck
group of $G$.

\subsubsection{Representation theory of $L_{\gamma}$, $\gamma = \alpha$ or $\beta$}\label{subsubsec:GL2Rep}
We recall some $\GL(2)$ theory since the Levi factors of maximal parabolic
subgroups of $G_2$ are isomorphic to $\GL_2 (F)$. (For this subsection only) Denote by $B=TU$ a Borel
subgroup of $\GL_2 (F)$ with maximal torus $T$. For a smooth character $\chi$ of $F^{\times}$ and any smooth admissible representation $\pi$ of
$\GL_2 (F)$, we denote by $ \pi \chi$ the twist of $\pi$ by $\chi \circ \det $.
We define the \emph{generalized Steinberg representation} $\delta(\chi)$ as the unique irreducible subrepresentation of
$\NI^{\GL_2}_B (\nu ^{1 / 2} \chi \otimes \nu^{-1 / 2} \chi )$, and the
\emph{generalized trivial representation} $ \chi \circ \det$ as the unique irreducible quotient of
$\NI^{\GL_2}_B (\nu ^{1 / 2} \chi \otimes \nu^{-1 / 2} \chi )$. For unitary
characters $\chi_1, \chi_2$, denote by $\pi\left(\chi_1,
  \chi_2\right):=I^{\gamma} (\chi_1
\otimes \chi_2 )$ for $\gamma = \alpha $ or $\beta$, which  is a tempered irreducible representation. Now we recall well-known
facts about principal series representations of Levi factors of maximal
parabolic subgroups of $G_2$ following \cite[Proposition 1.1]{Muic1997}.

\begin{prop}\label{prop:Mgamma}
  Suppose that $\chi, \chi_1$ and $\chi_2$ are characters of $F^{\times}$, and
  $\gamma \in$ $\{\alpha, \beta\}$. Then we have the following.
  \begin{enumerate}[(1)]
  \item The principal series $I^{\gamma} \left(\chi_1 \otimes \chi_2\right)$ of
    $L_\gamma$ reduces if and only if $\left(\chi_1 \otimes \chi_2\right) \circ
    \gamma^{\vee}=\nu^{ \pm 1}$. If $I^{\gamma} \left(\chi_1 \otimes \chi_2\right)$
    is irreducible, it is isomorphic to $I^{\gamma} \left(s_\gamma\left(\chi_1 \otimes
        \chi_2\right)\right)$.
    \item The principal series $I^\alpha\left( \nu^{1 / 2} \chi \otimes \nu^{-1 / 2}
        \chi\right)$ (\emph{resp.} $I^\alpha\left( \nu^{-1 / 2} \chi \otimes \nu^{1 / 2}
          \chi\right)$) contains $\delta(\chi)$ and $\chi \circ$ det as a
      unique irreducible subrepresentation (\emph{resp.} quotient) and quotient
      (\emph{resp.} subrepresentation).
      \item  The principal series $I^\beta\left(\nu^{-1 / 2} \chi \otimes  \nu
        \right) $ ( \emph{resp.} $I^\beta\left(\nu^{1 / 2} \chi \otimes
          \nu^{-1}\right)$) contains $\delta(\chi)$ and $\chi \circ \det$  as a
        unique irreducible subrepresentation (\emph{resp.} quotient) and
        quotient (\emph{resp.} subrepresentation).
  \end{enumerate}
\end{prop}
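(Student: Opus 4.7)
The plan is to reduce all three assertions to the classical representation theory of $\GL_2(F)$, which applies because each $L_\gamma$ is isomorphic to $\GL_2(F)$ via \eqref{eq:Torusab}. The main bookkeeping task is to track the character identifications in \eqref{eq:Characterab} and their effect on coroots.

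For part (1), I first compute $(\chi_1 \otimes \chi_2) \circ \gamma^\vee$ in the coordinates $T \cong F^\times \times F^\times$. Under \eqref{eq:Torusab}, the $\GL_2$-coroot corresponding to the short simple root lifts to $\alpha^\vee(t) = (t, t^{-1})$, yielding $(\chi_1 \otimes \chi_2) \circ \alpha^\vee = \chi_1\chi_2^{-1}$; for the long root, solving $\mathrm{diag}(t_2, t_1 t_2^{-1}) = \mathrm{diag}(t, t^{-1})$ gives $\beta^\vee(t) = (1, t)$, so $(\chi_1 \otimes \chi_2) \circ \beta^\vee = \chi_2$. On the other side, \eqref{eq:Characterab} identifies $I^\alpha(\chi_1 \otimes \chi_2)$ with $\NI^{\GL_2}_B(\chi_1 \otimes \chi_2)$ and $I^\beta(\chi_1 \otimes \chi_2)$ with $\NI^{\GL_2}_B((\chi_1\chi_2) \otimes \chi_1)$. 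The classical $\GL_2$ criterion (reducibility iff the ratio of the two characters equals $\nu^{\pm 1}$) then yields precisely the conditions $\chi_1\chi_2^{-1} = \nu^{\pm 1}$ and $\chi_2 = \nu^{\pm 1}$, respectively, matching $(\chi_1 \otimes \chi_2) \circ \gamma^\vee = \nu^{\pm 1}$ in each case.

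For the second assertion of (1), in the irreducible regime the standard $\GL_2$-intertwining integral produces an isomorphism $\NI^{\GL_2}_B(\mu_1 \otimes \mu_2) \cong \NI^{\GL_2}_B(\mu_2 \otimes \mu_1)$. Under \eqref{eq:Characterab}, the swap $\mu_1 \leftrightarrow \mu_2$ pulls back to the reflection $s_\gamma$ on $F^\times \times F^\times$ as recorded in \cite[Table 1]{AubertXu2023explicit}; this is immediate for $\gamma = \alpha$, and for $\gamma = \beta$ requires a small check that exchanging $\chi_1\chi_2$ and $\chi_1$ on the $\GL_2$-side corresponds to the action of $s_\beta$ on $(\chi_1, \chi_2)$.

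For parts (2) and (3), I invoke the standard fact that $\NI^{\GL_2}_B(\nu^{1/2}\chi \otimes \nu^{-1/2}\chi)$ fits into a non-split short exact sequence whose unique irreducible subrepresentation is $\delta(\chi)$ (by the very definition in the paper) and whose unique irreducible quotient is $\chi \circ \det$; the opposite ordering $\NI^{\GL_2}_B(\nu^{-1/2}\chi \otimes \nu^{1/2}\chi)$ has the reversed socle and cosocle. Part (2) is then immediate since $I^\alpha$ coincides with $\NI^{\GL_2}_B$ under \eqref{eq:Characterab}. For part (3) I verify via \eqref{eq:Characterab} that $I^\beta(\nu^{-1/2}\chi \otimes \nu)$ translates to $\NI^{\GL_2}_B(\nu^{1/2}\chi \otimes \nu^{-1/2}\chi)$ (using $\chi_1\chi_2 = \nu^{1/2}\chi$ and $\chi_1 = \nu^{-1/2}\chi$), and similarly that $I^\beta(\nu^{1/2}\chi \otimes \nu^{-1})$ becomes $\NI^{\GL_2}_B(\nu^{-1/2}\chi \otimes \nu^{1/2}\chi)$. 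The only real obstacle is tracking the $\beta$-identification carefully, since its characters recombine nontrivially; once the translations are in place, everything reduces to classical $\GL_2$-theory.
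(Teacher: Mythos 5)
Your proposal is correct, but note that the paper does not actually prove this proposition: it is recalled verbatim from \cite[Proposition 1.1]{Muic1997} as a known fact about the $\GL_2$ Levi factors, so there is no in-paper argument to compare against beyond the citation. What you supply is a self-contained verification of that recalled fact, and your bookkeeping is consistent with the paper's conventions: under \refeq{eq:Torusab} one indeed gets $\alpha^{\vee}(t)=(t,t^{-1})$ and $\beta^{\vee}(t)=(1,t)$ in the $F^{\times}\times F^{\times}$ coordinates (this is forced by the pairings $\langle\alpha,\beta^{\vee}\rangle=-1$, $\langle\beta,\alpha^{\vee}\rangle=-3$, confirming $\alpha$ short and $\beta$ long), so $(\chi_1\otimes\chi_2)\circ\alpha^{\vee}=\chi_1\chi_2^{-1}$ and $(\chi_1\otimes\chi_2)\circ\beta^{\vee}=\chi_2$, matching the $\GL_2$ reducibility criterion applied to $\NI^{\GL_2}_B(\chi_1\otimes\chi_2)$ and $\NI^{\GL_2}_B(\chi_1\chi_2\otimes\chi_1)$ respectively. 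The one step you only sketch, the identification of the $\GL_2$ swap with $s_\beta$, does check out: swapping $\chi_1\chi_2$ and $\chi_1$ corresponds to $(\chi_1,\chi_2)\mapsto(\chi_1\chi_2,\chi_2^{-1})$, which is exactly the action of $s_\beta$ computed from $s_\beta(t_1,t_2)=(t_1,t_1t_2^{-1})$, consistent with \cite[Table 1]{AubertXu2023explicit}. Your translations in (3), sending $I^\beta(\nu^{-1/2}\chi\otimes\nu)$ to $\NI^{\GL_2}_B(\nu^{1/2}\chi\otimes\nu^{-1/2}\chi)$ and $I^\beta(\nu^{1/2}\chi\otimes\nu^{-1})$ to $\NI^{\GL_2}_B(\nu^{-1/2}\chi\otimes\nu^{1/2}\chi)$, also agree with the paper's stated restriction formulas $r_T^{L_\beta}(\nu^s\delta(\chi))=\nu^{s-1/2}\chi\otimes\nu$, which is a good sanity check. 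In short, your route is the natural expansion of the citation, and it buys an explicit record of the character and coroot dictionary that the paper otherwise leaves implicit.
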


The restriction operators applied to the generalized Steinberg and generalized trivial representations are
also adapted depending on $\gamma = \alpha$ or $\beta$, we have
\[
  r_T^{L_{\beta}} \left( \nu^s \delta (\chi) \right) =
  \nu^{s-1/2} \chi \otimes \nu,
\]
and
\[
  r_T^{L_{\beta}} (\nu^s \chi \circ \det)=
  \nu^{s+1/2} \chi \otimes \nu^{-1}.
\]
The tensor operation also needs to be
adapted depending on $\gamma$, we have
\[
\nu^s \otimes I^{\alpha}
(\chi_1 \otimes \chi_2)= I^{\alpha} (\nu^s \chi_1 \otimes \nu^s \chi_2),
\]
and
\[
  \nu^s \otimes
  I^{\beta} (\chi_1 \otimes \chi_2) = I^{\beta}(\chi_1 \nu^s \otimes \chi_2).
\]

\subsubsection{Involution for affine Hecke algebras}
For any element $w \in W$, we introduce its length $ \ell (w)$ as
 the smallest number  such that $w$ can be written as a product of $ \ell (w)$
 simple reflections, and $ \operatorname{sgn} (w): = (-1)^{\ell (w)}$. Let $w_0$
 denote the
 longest element of $W$.  Let $W_I$ denote the subgroup of $W$ generated by $I$. Also let $\Res_{W_I}^W$
(\emph{resp.} $\Ind_{W_I}^W$) denote the restriction (\emph{resp.} induction functor) for
representations of the finite groups $W$ and $W_I$. Let $\R$ denote the \emph{root datum} quadruple $(X, R,X^{\vee},R^{\vee})$, where  $X$ ($X^{\vee}$)  is weight (\emph{resp.} coweight), ($R^{\vee}$) $R$ is root (\emph{resp.} coroot).  Let
us recall the definition of the affine Weyl group $\Waff := W \ltimes \Z R$ (\emph{resp.} extended
affine Weyl group $ W(\R) := W \ltimes X$) as the semi-direct product of finite
Weyl group $W$ by the lattice $\Z R$ (\emph{resp.} $X$). Let $w
=w_{\mathrm{fin}} t_{\lambda}$ be the decomposition for $w$ as an element in $ W(\R)$. Here
$w_{\mathrm{fin}}$ denotes the finite Weyl group part and $t_{\lambda}$, $\lambda \in X$ denotes the
translation part
associated with some $\lambda \in X$. We know that $(\Waff, \Saff)$ is a Coxeter
system with $\Waff$
and the set of generators $\Saff := S \bigcup
\ \left\{  \textrm{an affine reflection }s_0 \right\}$. There exists a semi-direct
product decomposition of $W(\R)$ as $\Omega \ltimes
\Waff$ where $\Omega$ is the stabilizer of ``fundamental alcove'' determined by $\Saff$. Let $\ell$ be the length
function for the Coxeter system $(\Waff, \Saff)$: it extends naturally to
$W(\R)$ by requiring $\ell (w) =0$ if $w \in \Omega$. We may thus write $w =  \gamma
\tau$, with $\gamma \in \Omega$ of length $0$ and $\tau$ a product of $\ell(w)$
elements of $\Saff$.
We recall the definition for affine Hecke algebra.
\begin{defn}[Iwahori-Matsumoto]\label{defn:HeckeIM}
  For $q_s \in \mathbb{C}^{\times} \backslash \{ \textrm{roots of unity}\}$, define the
  extended affine Hecke algebra $\H= \H (\WR, q_s)$ as
  the $\mathbb{C}$-algebra with basis $T_w$, $w \in \WR$ satisfying:
  \begin{equation}
    \label{eq:HeckeAlgDef0}
    (T_s+1)(T_s -q_s) =0 \textrm{, for } s \in \Saff ,
  \end{equation}
  \begin{equation}
    \label{eq:HeckeAlgDef}
    T_w \cdot T_{w'} =T_{ww'} \textrm{, if }\ell(w) + \ell(w') = \ell(ww').
  \end{equation}
\end{defn}

We denote by $\H$ as the extended affine Hecke algebra $\H (W(\R),q_s)$ over a fixed field $K$ associated with $W(\R)$
 and unequal parameters $q_s \in \mathbb{C}^{\times}
\backslash \{ \textrm{roots of unity}\}$, $s \in S$. We also let $\H_{I}$ be the
subalgebra of $\H$ associated with $W(\R)_I: = W_I \ltimes X$.

We define the dual of a $\H$-module $(\pi, M)$ by
\[
  D[M]: = \sum_{I \subset S} (-1)^{|I|}[\Ind_I (\Res_I M)],
\]
where  $\Res_I$ is the usual restriction functor to $\H_I$, while the induction functor
$\Ind_I $ is defined by $\Ind_I N: =\H \otimes_{\H_{I}}N$ for an $\H_I$-module
$N$ and $[M]$ denotes the
image of $M$ in the Grothendieck
group of finite dimensional modules over $K$.

From \cite{Kato1993} or the author's thesis, we have
\begin{thm}\label{thm:KatoEn}
  Define a twisted action $*$ on $\H$ as follows: for \[
    w = w_{\mathrm{fin}} t_{\lambda}= w_{\Omega} t_{\mu} s_{i_1}
    s_{i_2} \cdots s_{i_r} \in \WR
  \]
set
\[
  T_w^{*} = (-1)^{l( w_{\mathrm{fin}}
    )} q(w) T_{w^{-1}}^{-1}
\]
where  $q(w) = \displaystyle \prod_{k=1}^r q_{s_{i_k}}$. Given $\H$-module $(\pi, M)$, let
$(\pi^{*},M^{*})$ be the $\H$-module such that $M^{*} = M$ as $K$-vector space, equipped
with the $\H$-action $\pi^{*}(h)(m) := \pi (h^{*}) (m)$, $\forall m \in M$ and
$\forall h \in \H$. Then we have the following equality in the Grothendieck group:
\[
D[M] = [M^{*}].
\]
\end{thm}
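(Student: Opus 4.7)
The plan is to verify the identity $D[M] = [M^{*}]$ in the Grothendieck group of finite-dimensional $\H$-modules by reducing it to a character computation on the Iwahori--Matsumoto basis. First I would check that the assignment $h \mapsto h^{*}$ extends to a well-defined algebra automorphism of $\H$, so that $M^{*}$ carries a bona fide $\H$-module structure. On a generator $T_s$ with $s \in \Saff$, the inverse relation $T_s^{-1} = q_s^{-1}(T_s + 1 - q_s)$ gives $T_s^{*} = q_s - 1 - T_s$ after applying the sign $(-1)^{\ell(s_{\mathrm{fin}})}$, and direct inspection shows that this element satisfies the quadratic relation $(X+1)(X-q_s)=0$. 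On a length-zero element $\omega \in \Omega$ one has $T_\omega^{*} = \pm T_{\omega^{-1}}$. Compatibility $T_v^{*} T_w^{*} = T_{vw}^{*}$ when $\ell(v)+\ell(w) = \ell(vw)$ then follows from the multiplicativity of $q(\cdot)$ on reduced expressions, together with the parity agreement $\ell(v_{\mathrm{fin}}) + \ell(w_{\mathrm{fin}}) \equiv \ell((vw)_{\mathrm{fin}}) \pmod{2}$.

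Next, since the class of a finite-dimensional $\H$-module in the Grothendieck group is determined by its trace character on any $K$-basis of $\H$, it suffices to establish, for every $w \in W(\R)$,
\[
\operatorname{tr}_{M^{*}}(T_w) \;=\; \operatorname{tr}_M(T_w^{*}) \;=\; \sum_{I \subset S} (-1)^{|I|}\,\operatorname{tr}_{\Ind_I \Res_I M}(T_w).
\]
For each fixed $I$, a Mackey-type trace formula applied to $\Ind_I \Res_I M = \H \otimes_{\H_I} M$ expresses the left trace at $T_w$ as a sum over minimal-length representatives of double cosets in $W(\R)_I \backslash W(\R) / W(\R)_I$, each contributing the trace on $M$ of a suitable element of $\H_I$ weighted by an appropriate product of the parameters $q_s$.

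The core of the argument is then the combinatorial collapse of the alternating sum over $I \subset S$. By inclusion--exclusion on the lattice of subsets of $S$---the Hecke-algebra incarnation of the vanishing of reduced cohomology of the Coxeter complex outside top degree---all double-coset contributions cancel except the one corresponding to $w$ itself, and this residual term is precisely $(-1)^{\ell(w_{\mathrm{fin}})} q(w)\,\operatorname{tr}_M(T_{w^{-1}}^{-1}) = \operatorname{tr}_M(T_w^{*})$. This is essentially Kato's computation in \cite{Kato1993}, transported to the extended affine setting.

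The main obstacle is the simultaneous bookkeeping of unequal parameters $q_s$ and the extended part $\Omega$: one must verify that length-zero elements do not produce spurious contributions (which is clear since $\Omega \not\subset W_I$ for any proper $I \subset S$) and that the sign $(-1)^{\ell(w_{\mathrm{fin}})}$ emerges naturally from the inclusion--exclusion rather than requiring a separate analysis. For the specific standard modules $M_{s,n,\rho}$ appearing in the tables of Section~\ref{subsec:Main}, one may alternatively use the geometric action on $K(\mathcal{B}_{s,u})$ to match the alternating sum with a sheaf-theoretic duality on the relevant Steinberg-type variety, thereby recovering the identity by direct calculation and providing an independent cross-check.
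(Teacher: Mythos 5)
First, note that the paper does not actually prove this statement: it is quoted from \cite{Kato1993} (or the author's thesis), so there is no in-paper argument to match your sketch against; what has to be judged is whether your outline would itself constitute a proof. Its overall shape is reasonable and is in the spirit of Kato's character-theoretic argument: reduce the identity in the Grothendieck group to the trace identity $\tr_{M^{*}}(T_w)=\tr_M(T_w^{*})=\sum_{I\subset S}(-1)^{|I|}\tr_{\Ind_I\Res_I M}(T_w)$ (legitimate in characteristic zero, since characters of non-isomorphic simple modules are linearly independent), and verify that $*$ is an algebra automorphism (also fine; note the small slip that for $\omega\in\Omega$ one gets $T_\omega^{*}=(-1)^{\ell(\overline{\omega})}T_\omega$, not $\pm T_{\omega^{-1}}$, since $T_{\omega^{-1}}^{-1}=T_\omega$).

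The genuine gap is in the middle, which is where the entire content of the theorem lies. Your ``Mackey-type trace formula'' is misstated: the character of $\Ind_I N=\H\otimes_{\H_I}N$ at $T_w$ is not naturally a sum over double cosets $W(\R)_I\backslash W(\R)/W(\R)_I$ (double cosets govern $\Res_J\Ind_I$, not traces of induced modules), and, unlike the group-algebra case, there is no clean coset-wise formula at all, because $T_wT_x$ for $x$ a minimal coset representative expands in the basis $\{T_yh: y\in W^{I}, h\in\H_I\}$ with nontrivial structure constants in the parameters $q_s$; extracting the diagonal contributions is exactly the hard computation. Consequently the ``core'' step --- that after inclusion--exclusion ``all contributions cancel except the one corresponding to $w$ itself,'' and that the survivor is precisely $(-1)^{\ell(w_{\mathrm{fin}})}q(w)\,\tr_M(T_{w^{-1}}^{-1})$ --- is simply a restatement of the theorem at the level of characters, asserted without a mechanism. (In Kato's actual proof, and in the analogous Aubert/Alvis--Curtis computations, the cancellation is driven by the descent-set identity $\sum_{I\cap D(v)=\emptyset}(-1)^{|I|}=0$ unless $D(v)=S$, so the surviving terms are governed by extreme coset representatives rather than ``the double coset of $w$ itself,'' and identifying the survivor with $q(w)T_{w^{-1}}^{-1}$ --- a non-basis element --- requires the Bernstein presentation and a filtration/weight argument, with careful bookkeeping of the unequal parameters and of $\Omega$.) As written, the proposal sets up a correct framework but leaves the decisive cancellation and identification unproved; the closing suggestion to use the geometric action on $K(\mathcal{B}_{s,u})$ is only a heuristic cross-check, not a substitute for that computation.
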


\subsection{Case by case discussion}\label{subsec:casebycase}

Let $\mathfrak{s} = \lbrack L, \sigma \rbrack_G $, where $L$ is a proper Levi
subgroup of $G_2$. Let $\sigma$ be an irreducible supercuspidal
representation of $L$. The Levi subgroups of $G_2$ are isomorphic to either
$\GL_2(F)$ or $F^{\times} \times F^{\times}$.

\subsection{The intermediate case $\mathfrak{s} = [L_{\alpha}, \sigma]_G$ and $
  \mathfrak{s}=[L_{\beta}, \sigma]_G$}
Let $\omega_{\sigma}$ denote the central character of $\sigma$. If $L = L_\alpha$ by \cite[Proposition 6.2]{Shahidi1989} and \cite[Section 8]{AubertXu2023explicit}:
\begin{enumerate}[(1)]
\item When $\omega_\sigma \neq 1, \NI_P^G(\sigma)$ is reducible and there
  are no complementary series.
  \item When $\omega_\sigma=1, \NI_P^G(\sigma)$ is irreducible, and $ \NI_P^G\left(\sigma \otimes \nu^s\right)$ is irreducible unless $s= \pm 1 / 2$.
\end{enumerate}
When  $\NI_P^G\left(\sigma \otimes \nu^{1 / 2}\right)$  is reducible, it has
has length 2: it has a unique generic discrete series subrepresentation
$\pi(\sigma)$ and a unique irreducible pre-unitary non-tempered Langlands
quotient, $J(\sigma)$. From \cite[4.1.2.]{AubertXu2022Hecke}, we know the Hecke algebra $\H^{\mathfrak{s}}$ in the short root case is either
\[
  \H\left(\left\{1, s_{3 \alpha+2 \beta}\right\}, q_F\right) \ltimes
  \C [\mathcal{O}]
\]
($\left\{1, s_{3 \alpha+2 \beta}\right\}$ is the group of two elements, and $q_F = $number of elements of the residue field) or
\[
  \H \left(\left\{1, s_{3 \alpha+2 \beta}\right\}, 1\right)
  \ltimes \C [\mathcal{O}].
  \]
\vspace{0.6cm}\\
If $M =L_{\beta}$, from \emph{loc.cit.} 4.1.1. there are four
possibilities for the series and Hecke algebra depending on the Plancherel measure. The
reducible case is $I_{\beta} ( \tilde{\beta} , \sigma)$ with $\tilde{\beta} =
\langle \rho , \beta \rangle^{-1} \rho $ where $\rho $ is half sum of positive
roots. In this case $I_{\beta} (\tilde{\beta}, \sigma )$ is length $2$ with a
unique $\chi$-generic subrepresentation $\pi (\sigma)$ (\emph{i.e.} it can be realized on a space of smooth
functions $W$ satisfying $W (um) = \chi (u) W(m)$, for $m \in L_{\beta}$, $u \in
U_{\beta}$, and $\chi$ is some generic character of $U_{\beta}$)  which is in the discrete
series and a preunitary non-tempered quotient $J(\sigma)$. From \cite[4.1.1.]{AubertXu2022Hecke}, we know the Hecke algebra $\H^{\mathfrak{s}}$ in the long root case are
\[
  \H\left(\left\{1, s_{2 \alpha+ \beta}\right\}, q_s \right) \ltimes
  \C [\mathcal{O}]
\]
where the parameters have four cases to discuss depending on $\omega$.

In both the short root case and the long root case, we have
\[
  \ND_G (\pi
  (\sigma)) = - J(\sigma).
\]
From now on, we will use the notations of indexing triples and
standard modules \refeq{eq:standardmodules} (see \cite{KazhdanLusztig1987} for
more details) to denote the irreducible
modules of Hecke algebras. From \cite[table 11]{AubertXu2023explicit}, we know
the involution for the modules of Hecke algebra is
\[
D_{\H^{\mathfrak{s}}} ([M_{t_a, e_{\alpha_1}, 1}]) = - M_{t_{a},0,1}.
\]

\subsection{Principal series}
We now deal with principal series. We start by recalling the setup in \cite[Section 9.3]{AubertXu2023explicit}. We
may write $\sigma = \xi_1 \otimes \xi_2 = \chi_1 \xi \otimes \chi_2 \xi$, with $\xi$ a ramified
character of $F^{\times}$, and unramified characters $\chi_1 = \nu^{s_1}$ and $\chi_2 = \nu^{s_2}$ for
$s_1$, $s_2 \in \C$.
We start with the assumption that at least one of  $ \chi_1 \xi $ and $ \chi_2
\xi$ is non-unitary, then following \cite[Proposition 3.1]{Muic1997}, $I(\nu^{s_1} \xi_1 \otimes \nu^{s_2} \xi_2)$ is irreducible unless:

  \begin{enumerate}[(1)]
\item $s_{1} \pm 1$, $ \xi =1$, $s_2$ arbitrary. (\emph{resp.} $s_2 \pm 1$, $
  \xi =1$, $s_1$ arbitrary ) \label{Muic1997itm:1}
\item $\nu^{s_1 + s_2} \xi^2 =\nu^{\pm 1}$, \label{Muic1997itm:2}
\item $s_1 - s_2 = \pm 1$, \label{Muic1997itm:3}
\item $\nu^{2s_1 +s_2} \xi^3 = \nu^{\pm 1}$, \label{Muic1997itm:4}
  \item $\nu^{s_1 + 2s_2} \xi^3 = \nu^{\pm 1}$. \label{Muic1997itm:5}
  \end{enumerate}
  The case \refeq{Muic1997itm:1} and \refeq{Muic1997itm:2} are in fact
  equivalent, \refeq{Muic1997itm:3} and \refeq{Muic1997itm:4} are equivalent.
\subsubsection{Case  \refeq{Muic1997itm:2} within the case $\mathfrak{s} = \lbrack T, \ \xi \otimes \xi \rbrack_G $}
In case \refeq{Muic1997itm:2}, we have $\nu^{s_1}_F \xi = \nu^{-s_2 \pm 1}
\xi^{-1}$. By \cite[Lemma 5.4 (iii)]{BernsteinDeligneKazhdan1986}, $I(\xi_1
\otimes \xi_2) = I( \nu^{-s_2 \pm 1} \xi^{-1} \otimes \nu^{s_2} \xi)
= I({}^{w} (\nu^{-s_2 \pm 1} \xi^{-1} \otimes \nu^{s_2} \xi) ) =
I(\nu^{\mp 1} \otimes \xi_2) \  [\operatorname{R}(G)]$, where $w = s_{\alpha} s_{\beta} s_{\alpha}  s_{\beta} s_{\alpha}$.

For the principal series case, by Section \ref{subsec:Principalblocks}, we have the Hecke
algebras $\H^{\mathfrak{s}} = \H (J_{\mathfrak{s}}, 1_J)$, we only need to find the
$J_{\mathfrak{s}}$ or their Langlands dual groups.

Now we discuss the cases when $I^{\alpha}(\nu^{\mp 1} \otimes \xi_2 \nu)$ and
  $I^{\beta} (\nu^{\mp 1} \otimes \xi_2 \nu)$ are
  irreducible. If $\nu^{\mp 1} \otimes \xi_2 \notin \{ \nu \otimes 1, \ \nu \otimes
\nu^{2}, \ \nu^{-1} \otimes 1 , \ \nu^{-1} \otimes \nu^{-2}, \
\nu^2 \otimes \nu^{-1}, \ 1 \otimes \nu^{-1}, \ 1 \otimes \nu , \ \nu^{-2}
\otimes \nu
\}$, then $I^{\alpha} (\nu^{\mp 1 -s} \otimes \xi_2 \nu^{-s})$ and
$I^{\beta} (\nu^{\mp 1} \otimes \xi_2 \nu)$ are
irreducible, but $I(\nu^{\pm 1} \otimes \xi_2)$ is reducible.

\begin{lem}
  \label{lem:raInuFmp1xi2}
  Under the above assumption, applying $r_{\alpha}$ to  $I(\nu^{\mp 1} \otimes
  \xi_2)$, we have the following
  \begin{equation} \label{eq:raInuFmp1xi2}
    \begin{aligned}
      r_{\alpha}I(\nu^{\mp 1} \otimes \xi_2) &= I^{\alpha} (\nu^{\mp 1} \otimes \xi_2)+  I^{\alpha} (\xi_2^{-1} \otimes \nu^{\pm 1})
       + I^{\alpha} (\nu^{\mp 1} \xi_2 \otimes \xi_2^{-1}) \\ & +I^{\alpha} (\nu^{\mp 1} \xi_2 \otimes \nu^{\pm 1}) + I^{\alpha} (\nu^{\mp 1} \otimes \nu^{\pm 1} \xi^{-1}_2)  + I^{\alpha} (\xi_2 \otimes \nu^{\pm 1} \xi_2^{-1}) .
    \end{aligned}
  \end{equation}

\end{lem}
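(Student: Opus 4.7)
The plan is to apply the Bernstein–Zelevinsky geometric lemma to the unnormalized version and then renormalize, or equivalently invoke Casselman's filtration for Jacquet modules of parabolically induced representations. Under this, $r_\alpha I(\chi)$ with $\chi = \nu^{\mp 1}\otimes\xi_2$ admits a filtration in $\Rep_{\mathrm{f}}(L_\alpha)$ whose successive quotients, read off in the Grothendieck group, are indexed by the double coset space $W_{L_\alpha}\backslash W(G_2)/W_\emptyset = W_{L_\alpha}\backslash W$; each contributing factor is $I^{\alpha}(w\chi)$ for a minimal length double coset representative $w$. Since $|W|=12$ and $|W_{L_\alpha}|=2$, we obtain exactly six factors, matching the count in \eqref{eq:raInuFmp1xi2}.

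The first step is to choose representatives of $W_{L_\alpha}\backslash W$ and compute their action on $\chi=\nu^{\mp 1}\otimes \xi_2$, using the explicit description of the Weyl action on characters of $T\cong F^\times\times F^\times$ recalled from \cite[Table 1]{AubertXu2023explicit}. A convenient system of minimal length representatives for $W_{L_\alpha}\backslash W$ is $\{1,\, s_\beta,\, s_\beta s_\alpha,\, s_\beta s_\alpha s_\beta,\, s_\beta s_\alpha s_\beta s_\alpha,\, s_\beta s_\alpha s_\beta s_\alpha s_\beta\}$; applied to $\nu^{\mp 1}\otimes\xi_2$ they produce the six characters appearing on the right-hand side of \eqref{eq:raInuFmp1xi2} (each well-defined up to $W_{L_\alpha}=\langle s_\alpha\rangle$-action, which is precisely the symmetry of $I^\alpha$ noted in Proposition \ref{prop:Mgamma}(1)). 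This reduces the assertion to a finite bookkeeping exercise on the $W(G_2)$-orbit.

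The second step is to promote the filtration to an equality in $\mathrm{R}(L_\alpha)$. The successive quotients of Casselman's filtration are only guaranteed to be subquotients of $I^\alpha(w\chi)$; to conclude that they literally \emph{are} these induced representations (each with multiplicity one), one uses the hypothesis that every $I^\alpha(w\chi)$ and $I^\beta(w\chi)$ is irreducible — this is exactly the content of the restriction placed on $\xi_2$ before the lemma (ruling out the characters in the displayed exclusion list, which are precisely those at which reducibility occurs by Proposition \ref{prop:Mgamma}(1)). Since each $I^\alpha(w\chi)$ is irreducible of the correct central character, the filtration must be exact as a sum in the Grothendieck group.

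The main obstacle is the combinatorial step: correctly enumerating the six cosets, applying $w$ to $\nu^{\mp 1}\otimes \xi_2$ via the $W(G_2)$-action of \cite[Table 1]{AubertXu2023explicit}, and verifying that the resulting characters match (up to the $s_\alpha$-symmetry of $I^\alpha$) the six listed in \eqref{eq:raInuFmp1xi2}. Once this table is checked, the rest is formal.
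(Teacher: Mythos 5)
Your proposal is correct and is in substance the same computation as the paper's: the paper also invokes the Bernstein--Zelevinsky geometric lemma, only organized in two stages through the intermediate Levi (first $r_{\alpha}\circ I_{\alpha}$ over $W_{L_{\alpha}}\backslash W/W_{L_{\alpha}}$, then $r^{L_{\alpha}}_{T}\circ I^{\alpha}$), whereas you apply it in one shot to $r_{\alpha}\circ \NI_{B}^{G}$ over the six cosets $W_{L_{\alpha}}\backslash W$. The two bookkeepings are equivalent and yield the same six characters, so your first step, once the orbit table from \cite[Table 1]{AubertXu2023explicit} is actually written out, gives exactly \eqref{eq:raInuFmp1xi2}.

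One point in your second step should be corrected, though it does not affect the conclusion. The geometric lemma of \cite{BernsteinZelevinsky1977} (equivalently Casselman's filtration) does not merely say that the graded pieces of the filtration of $r_{\alpha}\NI_{B}^{G}(\chi)$ are subquotients of the $I^{\alpha}(w\chi)$: it says they \emph{are} the values $I^{\alpha}\bigl(w\,r^{T}_{T}(\chi)\bigr)=I^{\alpha}(w\chi)$ of the composite functors, so the equality in $\operatorname{R}(L_{\alpha})$ holds unconditionally, with no irreducibility hypothesis on the $I^{\alpha}(w\chi)$ or $I^{\beta}(w\chi)$. Moreover, the patch you propose would not suffice as stated even if the weaker premise were true: irreducibility of $I^{\alpha}(w\chi)$ only forces each graded piece to be $0$ or all of $I^{\alpha}(w\chi)$, and ruling out $0$ would require an extra count (e.g.\ comparing with the twelve characters of $r_{\emptyset}\NI_{B}^{G}(\chi)$). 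So simply delete that step and quote the lemma in its exact form; the standing assumption on $\xi_2$ excluded before the lemma is needed for the later arguments (length two of $I(\nu^{\mp 1}\otimes\xi_2)$ via Rodier, identification of the two factors), not for the identity \eqref{eq:raInuFmp1xi2} itself.
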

\begin{proof}
  We know that $r_{\alpha}(I(\nu^{\mp 1} \otimes \xi_2)) = r_{\alpha} (I_{\alpha}
  (I^{\alpha}(\nu^{\mp 1} \otimes \xi_2 )))$. Applying
  \cite{BernsteinZelevinsky1977}  to the functor $r_{\alpha} \circ
I_{\alpha}$, we have:
  \begin{equation} \label{eq:raInuFmp1xi2}
    \begin{aligned}
      r_{\alpha}I(\nu^{\mp 1} \otimes \xi_2) &= I^{\alpha} (\nu^{\mp 1} \otimes \xi_2)+ s_{3 \alpha + 2 \beta} \circ I^{\alpha} (\nu^{\mp 1} \otimes \xi_2) + I^{\alpha} \circ s_{\beta} \circ r^{L_{\alpha}}_T (I^{\alpha} (\nu^{\mp 1} \otimes \xi_2)) \\
      &+ I^{\alpha} \circ s_{\alpha + \beta} \circ r^{L_{\alpha}}_T (I^{\alpha} (\nu^{\mp 1} \otimes \xi_2)).
    \end{aligned}
  \end{equation}
  The same theorem applied to $r^{L_{\alpha}}_T  \circ I^{\alpha} $ above, we
  obtain that
  \begin{equation}
    r^{L_{\alpha}}_T (I^{\alpha} (\nu^{\mp 1} \otimes \xi_2)) = \nu^{\mp 1} \otimes \xi_2 + \xi_2 \otimes \nu^{\mp 1}.
  \end{equation}
  Thus the following holds
  \begin{equation} \label{eq:raInuFmp1xi2}
    \begin{aligned}
      r_{\alpha}I(\nu^{\mp 1} \otimes \xi_2) &= I^{\alpha} (\nu^{\mp 1} \otimes \xi_2)+  I^{\alpha} (\xi_2^{-1} \otimes \nu^{\pm 1})
       + I^{\alpha} (\nu^{\mp 1} \xi_2 \otimes \xi_2^{-1}) \\ & +I^{\alpha} (\nu^{\mp 1} \xi_2 \otimes \nu^{\pm 1}) + I^{\alpha} (\nu^{\mp 1} \otimes \nu^{\pm 1} \xi^{-1}_2)  + I^{\alpha} (\xi_2 \otimes \nu^{\pm 1} \xi_2^{-1}) .
    \end{aligned}
  \end{equation}
\end{proof}

We deduce from the above Lemma that
 \begin{equation} \label{eq:r0InuFmp1xi2}
    \begin{aligned}
      r_{\emptyset}I(\nu^{\mp 1} \otimes \xi_2) &= \nu ^{\mp 1} \otimes \xi_2+  \xi_2 \otimes \nu^{\mp 1} +  \xi_2^{-1} \otimes \nu^{\pm 1} +  \nu^{\pm 1} \otimes \xi_2^{-1} + \nu^{\mp 1} \xi_2 \otimes \xi_2^{-1}\\ & + \xi_2^{-1} \otimes \nu^{\mp 1} \xi_2 + \nu^{\mp 1} \xi_2 \otimes \nu^{\pm 1} + \nu^{\pm 1} \otimes \nu^{\mp 1}  \xi_2 + \nu^{\mp 1} \otimes \nu^{\pm 1} \xi_2^{-1}  \\ & + \nu^{\pm 1} \xi_2^{-1} \otimes \nu^{\mp 1} + \xi_2 \otimes \nu^{\pm 1} \xi_2^{-1} + \nu^{\pm 1} \xi_2^{-1} \otimes \xi_2 .
    \end{aligned}
  \end{equation}
By \cite{Rodier1981}, we know $I(\nu^{\mp 1}
\otimes \xi_2)$ has length $2 \times \operatorname{Card}(\{ s_{3\alpha + 2 \beta}  \}) =2$. Let $\pi (\nu^{\mp 1} \otimes \xi_2)$
denote its irreducible subrepresentation and $J(\nu^{\mp 1} \otimes \xi_2)$
denote its irreducible quotient.

If $\xi_2$ is ramified cubic, $\mathfrak{J}^{\mathfrak{s}} = \SL_3 (\C)$ and
$\H^{\mathfrak{s}} = \H ( \operatorname{PGL}_3 (F), 1)$.  \cite[Section 9.3.1 case (a)]{AubertXu2023explicit}
and \cite[Table 4.1]{Ram2004Rank2}, $\pi (\nu^{\mp 1} \otimes \xi_2)$ and  $J(\nu^{\mp 1} \otimes \xi_2)$
correspond to the standard module indexed by $(t_b, e_{\alpha_2},1)$ and $(t_b,
0, 1)$ respectively. We conclude that
\[
  D_{\H^{\mathfrak{s}}}([M_{t_b, e_{\alpha_2},1} ]) = [M_{t_b, e_{\alpha_2},1}^{*}] = [M_{t_b,
    0, 1}].
\]

We can see easily that if we require $I^{\alpha}(\nu^{\mp 1} \otimes \xi_2 \nu)$ to
be irreducible, then $\xi_2$ is not quadratic. If $\xi_2$ is unramified, then  $I^{\alpha}(\nu^{\mp 1} \otimes \xi_2 \nu)$
being irreducible forces it not to be of any finite order.

If $\xi_2$ is ramified non-cubic, $\mathfrak{J}^{\mathfrak{s}} = \GL_2 (\C)$ and
$\H^{\mathfrak{s}} = \H ( \operatorname{GL}_2 (F), 1)$. By
\cite[Section 9.3.1 case (b)]{AubertXu2023explicit}
and \cite[Table 2.1]{Ram2004Rank2}, $\pi (\nu^{\mp 1} \otimes \xi_2)$ and  $J(\nu^{\mp 1} \otimes \xi_2)$
correspond to the standard module indexed by $(t_a, e_{\alpha_1},1)$ and $(t_a,
0, 1)$ respectively. We conclude that
\[
  D_{\H^{\mathfrak{s}}}([M_{t_a, e_{\alpha_1},1} ]) = [M_{t_a, e_{\alpha_1},1}^{*}] = [M_{t_a,
    0, 1}].
\]

If $\xi_2$ is unramified and $s_2 \neq \pm 1$ (required by $I^{\beta}
(\nu^{\mp 1} \otimes \xi_2 \nu)$ being irreducible), then
$\mathfrak{J}^{\mathfrak{s}}= G_2 (\C)$ and
$\H^{\mathfrak{s}} = \H ( G_2 (F), 1)$.  By \cite[Section
9.3.1 case (c)]{AubertXu2023explicit} and \cite[Table 6.1]{Ram2004Rank2},  $\pi (\nu^{\mp 1}
\otimes \xi_2)$ and  $J(\nu^{\mp 1} \otimes \xi_2)$
correspond to the standard module indexed by $(t_g, e_{\alpha_1},1)$ and $(t_g,
0, 1)$ respectively. We conclude that
\[
  D_{\H^{\mathfrak{s}}}([M_{t_g, e_{\alpha_1},1} ]) = [M_{t_g, e_{\alpha_1},1}^{*}] = [M_{t_g,
    0, 1}].
\]

\subsection{Two lemmas for computation}
Before we discuss the case when $I^{\alpha}(\nu^{\mp 1} \otimes \xi_2 \nu)$
is reducible, we need to introduce some lemmas for computational convenience.

\begin{lem}\label{lem:ra}
  Let $\chi \in \operatorname{Irr}(F^{\times})$. We have in $\operatorname{R}(L_{\alpha})$:
\begin{equation*}
    \begin{aligned}
    & r_{\alpha} \left( I_{\alpha} \left( s, \delta (\chi) \right) \right)  = \nu^s \delta (\chi) +\nu^{-s} \delta (\chi^{-1}) + I^{\alpha} (\nu^{2s} \chi^2 \otimes \nu^{-s+1/2} \chi^{-1})+I^{\alpha} (\nu^{s+1/2 } \chi \otimes \nu^{-2s} \chi^{-2}). \\
   & r_{\alpha} \left( I_{\alpha}(s, \chi \circ \det) \right) = \nu^s  \chi \circ \det +\nu^{-s}\chi^{-1} \circ \det + I^{\alpha} (\nu^{s-1/2} \chi \otimes \nu^{-2s} \chi^{-2})+I^{\alpha} (\nu^{2s} \chi^2  \otimes \nu^{-s-1/2} \chi^{-1}).\\
  &  r_{\alpha} \left( I_{\beta} \left( s, \delta (\chi) \right) \right) = I^{\alpha} (\nu^{s-1/2} \chi \otimes \nu) + I^{\alpha} (\nu \otimes \nu^{-(s+1/2)} \chi^{-1}) + I^{\alpha} (\nu^{s+1/2} \chi \otimes \nu^{-s+1/2} \chi^{-1}).\\
 & r_{\alpha} \left( I_{\beta}\left( s, \chi \circ \det \right) \right) = I^{\alpha} (\nu^{s+1/2} \chi \otimes \nu^{-1}) + I^{\alpha} (\nu^{-1} \otimes \nu^{-s+1/2} \chi^{-1}) + I^{\alpha} (\nu^{s-1/2} \chi \otimes \nu^{-s-1/2} \chi^{-1}).
  \end{aligned}
\end{equation*}
If we use \cite[Lemma 5.4 (iii)]{BernsteinDeligneKazhdan1986}, we also have:
\begin{equation*}
    r_{\alpha} \left( I_{\beta}(s, \chi \circ \det) \right) = I^{\alpha} (\nu^{-s+1/2} \chi^{-1} \otimes \nu^{-1}) + I^{\alpha} (\nu^{-1} \otimes \nu^{s+1/2} \chi) + I^{\alpha} (\nu^{-s-1/2} \chi^{-1} \otimes \nu^{s-1/2} \chi).
  \end{equation*}
\end{lem}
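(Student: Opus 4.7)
The plan is to derive each of the four identities by applying the Bernstein--Zelevinsky geometric lemma (\cite{BernsteinZelevinsky1977}) to the composite functor $r_\alpha \circ I_\gamma$ for $\gamma \in \{\alpha,\beta\}$, in the same spirit as the computation already carried out for Lemma \ref{lem:raInuFmp1xi2}.

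For the first two identities, both induction and restriction run through $P_\alpha$, so the geometric lemma yields exactly the four-term expansion already used inside \refeq{eq:raInuFmp1xi2}, namely $r_\alpha(I_\alpha(\pi)) = \pi + s_{3\alpha+2\beta}(\pi) + I^\alpha(s_\beta \cdot r_T^{L_\alpha}(\pi)) + I^\alpha(s_{\alpha+\beta}\cdot r_T^{L_\alpha}(\pi))$. I would substitute $\pi = \nu^s\delta(\chi)$ and $\pi = \nu^s(\chi\circ\det)$ in turn. The element $s_{3\alpha+2\beta}$ represents the non-trivial class in $N_G(L_\alpha)/L_\alpha$ and therefore inverts the $\GL_2$-structure, producing $\nu^{-s}\delta(\chi^{-1})$ and $\nu^{-s}(\chi^{-1}\circ\det)$ respectively. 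For the remaining two summands I would apply the standard $\GL_2$ Jacquet-module formulas $r_T^{L_\alpha}(\nu^s\delta(\chi)) = \nu^{s+1/2}\chi \otimes \nu^{s-1/2}\chi$ and $r_T^{L_\alpha}(\nu^s(\chi\circ\det)) = \nu^{s-1/2}\chi \otimes \nu^{s+1/2}\chi$, then translate by $s_\beta$ and $s_{\alpha+\beta}$ using the Weyl action on $T$ tabulated in \cite[Table 1]{AubertXu2023explicit}, and collect terms.

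For $r_\alpha \circ I_\beta$, the geometric lemma now runs over the double coset space $W^{L_\alpha}\backslash W / W^{L_\beta}$; since $\alpha$ and $\beta$ are adjacent simple roots of $G_2$, there are precisely three double cosets, producing the three summands that appear in the statement. Choosing shortest-length representatives $w$, two of them pass through $T$ and contribute terms of the form $I^\alpha(w \cdot r_T^{L_\beta}(\pi))$, where the torus restrictions $r_T^{L_\beta}(\nu^s\delta(\chi)) = \nu^{s-1/2}\chi \otimes \nu$ and $r_T^{L_\beta}(\nu^s(\chi\circ\det)) = \nu^{s+1/2}\chi \otimes \nu^{-1}$ are already recorded in subsection \ref{subsubsec:GL2Rep}. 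The third, identity-type, double coset supplies the remaining summand $I^\alpha(\nu^{s+1/2}\chi \otimes \nu^{-s+1/2}\chi^{-1})$ in the $\delta(\chi)$ case (and its analogue $I^\alpha(\nu^{s-1/2}\chi \otimes \nu^{-s-1/2}\chi^{-1})$ in the $\chi\circ\det$ case). The variant expression for $r_\alpha(I_\beta(s,\chi\circ\det))$ is then immediately obtained by applying \cite[Lemma 5.4(iii)]{BernsteinDeligneKazhdan1986} to replace each $I^\alpha(\eta_1\otimes\eta_2)$ by $I^\alpha(s_\alpha(\eta_1\otimes\eta_2))$ in the Grothendieck group.

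The main obstacle is the careful bookkeeping of the Weyl group action on $T$ under the two conflicting parametrizations of $T\cong F^\times\times F^\times$ inherited from $T_\alpha$ and $T_\beta$ via \refeq{eq:Torusab} and \refeq{eq:Characterab}: a character arising from $r_T^{L_\beta}$ is labelled using the $T_\beta$ convention, whereas subsequent induction through $I^\alpha$ reads it through the $T_\alpha$ convention, so the intervening Weyl translates must be composed in exactly the right order. Once this conversion is pinned down, everything else reduces to a mechanical expansion and regrouping of terms.
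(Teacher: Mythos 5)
Your strategy is the intended one: the paper states this lemma without proof, but its neighbouring Lemma \ref{lem:raInuFmp1xi2} is proved by exactly the computation you describe, and your treatment of $r_{\alpha}\circ I_{\alpha}$ (the four-term expansion as in \refeq{eq:raInuFmp1xi2}, the $\GL_2$ Jacquet-module formulas, and the fact that conjugation by $s_{3\alpha+2\beta}$ sends $\nu^{s}\delta(\chi)$ to $\nu^{-s}\delta(\chi^{-1})$ and $\nu^{s}\chi\circ\det$ to $\nu^{-s}\chi^{-1}\circ\det$) is correct.

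One detail in your description of $r_{\alpha}\circ I_{\beta}$ is off. Since no element of $W$ carries the long root $\beta$ to $\pm\alpha$, one has ${}^{w}L_{\beta}\cap L_{\alpha}=T$ for \emph{every} double-coset representative $w$, so all three terms — not just two — are of the form $I^{\alpha}\bigl(w\cdot r_{T}^{L_{\beta}}(\pi)\bigr)$; there is no ``identity-type'' term that bypasses the torus. Moreover, the identity coset contributes $I^{\alpha}\bigl(r_{T}^{L_{\beta}}(\nu^{s}\delta(\chi))\bigr)=I^{\alpha}(\nu^{s-1/2}\chi\otimes\nu)$, i.e.\ the \emph{first} summand of the stated formula (using the paper's own recorded value of $r_{T}^{L_{\beta}}$ in the global $F^{\times}\times F^{\times}$ convention), not $I^{\alpha}(\nu^{s+1/2}\chi\otimes\nu^{-s+1/2}\chi^{-1})$; that summand and $I^{\alpha}(\nu\otimes\nu^{-(s+1/2)}\chi^{-1})$ come from the two nontrivial cosets, whose minimal-length representatives are $s_{\beta}s_{\alpha}$ and $s_{\beta}s_{\alpha}s_{\beta}s_{\alpha}$. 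This is a bookkeeping slip rather than a structural gap: once each of the three cosets is evaluated with the $T_{\beta}$-to-$T_{\alpha}$ conversion of \refeq{eq:Characterab} applied in the order you indicate (and the Weyl action taken from \cite[Table 1]{AubertXu2023explicit}), the mechanical expansion does yield the stated identities, and the final variant of $r_{\alpha}(I_{\beta}(s,\chi\circ\det))$ follows from \cite[Lemma 5.4 (iii)]{BernsteinDeligneKazhdan1986} exactly as you say.
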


\begin{lem}\label{lem:rb}
 Let $\chi \in \operatorname{Irr}(F^{\times})$. In
 $\operatorname{R}(L_{\beta})$, we have:
 \begin{equation*}
   \begin{aligned}
    & r_{\beta} \left(I_{\beta}\left(s, \delta (\chi) \right) \right) = \nu^s \delta (\chi) + \nu^{-s} \delta (\chi^{-1}) + I^{\beta} (\nu \otimes \nu^{s-1/2} \chi) + I^{\beta} (\nu^{-s+1/2} \chi^{-1} \otimes \nu^{s+1/2} \chi).\\
   & r_{\beta} \left( I_{\beta}(s, \chi \circ \det) \right) = \nu^s \chi \circ \det + \nu^{-s}  \chi^{-1} \circ \det + I^{\beta} (\nu^{-1} \otimes \nu^{s+1/2} \chi) + I^{\beta} (\nu^{-s-1/2} \chi^{-1} \otimes \nu^{s-1/2} \chi).\\
  &  r_{\beta} \left( I_{\alpha}(s, \delta (\chi)) \right) = I^{\beta} (\nu^{s+1/2} \chi \otimes \nu^{s-1/2} \chi) + I^{\beta} (\nu^{-2s} \chi^{-2} \otimes \nu^{s+1/2} \chi) + I^{\beta} (\nu^{-s+1/2} \chi^{-1} \otimes \nu^{2s} \chi^2).\\
   & r_{\beta} \left( I_{\alpha}(s, \chi \circ \det) \right) = I^{\beta} (\nu^{s-1/2} \chi \otimes \nu^{s+1/2} \chi) +I^{\beta}(\nu^{-2s} \chi^{-2} \otimes \nu^{s-1/2} \chi) + I^{\beta} (\nu^{-s-1/2} \chi^{-1} \otimes \nu^{2s} \chi^2).
    \end{aligned}
\end{equation*}

\end{lem}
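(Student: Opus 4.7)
The plan is to closely mirror the strategy used for Lemma \ref{lem:ra}, exchanging the roles of the parabolics $P_\alpha$ and $P_\beta$. First I would apply the Bernstein--Zelevinsky geometric lemma (i.e., Casselman's formula for the Jacquet functor of a parabolically induced representation, as used in the proof of Lemma \ref{lem:raInuFmp1xi2}) to each of the four compositions $r_\beta \circ I_\beta(s, \cdot)$ and $r_\beta \circ I_\alpha(s, \cdot)$. For $\gamma \in \{\alpha, \beta\}$, this expresses $r_\beta \circ I_\gamma(\pi)$ in $\operatorname{R}(L_\beta)$ as a sum indexed by minimal-length representatives of the double cosets $W_\beta \backslash W / W_\gamma$. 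A direct enumeration in the Weyl group of $G_2$ shows that the case $\gamma = \beta$ has four double cosets (the identity, the coset of $s_\beta$, the coset containing the long element $w_0$, and one ``crossed'' coset), while the case $\gamma = \alpha$ has three. The two ``diagonal'' contributions in the $\gamma = \beta$ case directly account for the first two terms $\nu^s \delta(\chi) + \nu^{-s}\delta(\chi^{-1})$ (resp.\ $\nu^s \chi\circ\det + \nu^{-s}\chi^{-1}\circ\det$), the duality $\chi \mapsto \chi^{-1}$ being produced by the action of $s_\beta$ on the twisted factor.

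Next I would feed in the Jacquet module formulas for the generalized Steinberg and trivial representations of $L_\alpha$ and $L_\beta$ recalled in Subsection \ref{subsubsec:GL2Rep}, namely
\[
r^{L_\beta}_T(\nu^s \delta(\chi)) = \nu^{s-1/2}\chi \otimes \nu, \qquad r^{L_\beta}_T(\nu^s \chi \circ \det) = \nu^{s+1/2}\chi \otimes \nu^{-1},
\]
together with the analogous symmetric formulas for $L_\alpha$ that are already used in the proof of Lemma \ref{lem:ra}. Applying each double coset representative to the resulting characters of $T$, and then re-inducing along $T(U \cap L_\beta) \subset L_\beta$ via $I^\beta$, produces the explicit sums in the statement. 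In particular, the $\nu^{\pm 1}$ factors visible on the right-hand sides originate from the $L_\beta$-Jacquet module formulas above, and the shifts $s \pm 1/2$ in the exponents are exactly those obtained by passing from $\nu^s \delta(\chi)$ or $\nu^s \chi \circ \det$ to its Jacquet module.

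The main obstacle will be the bookkeeping of the Weyl group action on characters of $T$: the isomorphisms $T \cong T_\alpha$ and $T \cong T_\beta$ in \eqref{eq:Torusab} are genuinely different, so for the $\gamma = \alpha$ cases a character expressed in ``$T_\alpha$-coordinates'' on the output of $r^{L_\alpha}_T$ must first be translated into ``$T$-coordinates'' via \eqref{eq:Characterab}, then acted on by the Weyl element dictated by the double coset representative, and finally re-encoded in ``$T_\beta$-coordinates'' before the $I^\beta$ step. Once this dictionary is fixed, each of the four identities reduces to a short algebraic verification; and since everything is computed in the Grothendieck group $\operatorname{R}(L_\beta)$, possible reducibility of the intermediate filtration subquotients at special values of $\chi$ or $s$ causes no difficulty.
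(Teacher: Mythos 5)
Your approach is exactly the one the paper intends: the lemma is stated without proof, and the only worked model in the paper (the proof of Lemma \ref{lem:raInuFmp1xi2}) is precisely this Bernstein--Zelevinsky geometric-lemma computation combined with the $r_T^{L_\gamma}$ formulas for $\nu^s\delta(\chi)$ and $\nu^s\chi\circ\det$ and the coordinate dictionary \eqref{eq:Torusab}--\eqref{eq:Characterab}, so your plan reproduces the paper's (implicit) argument.

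One bookkeeping correction: your enumeration of $W_\beta \backslash W / W_\beta$ is off. The four double cosets are the identity coset $W_\beta=\{1,s_\beta\}$ (so ``the coset of $s_\beta$'' is not a separate coset), the coset of $w_0$ (of size $2$, since $w_0=-1$ is central), and \emph{two} crossed cosets of size $4$, with minimal representatives $s_\alpha$ and $s_\alpha s_\beta s_\alpha$. The two cosets normalizing $L_\beta$ (those of $1$ and $w_0$) give the terms $\nu^s\delta(\chi)+\nu^{-s}\delta(\chi^{-1})$ (the inversion $\chi\mapsto\chi^{-1}$, $s\mapsto -s$ comes from $w_0 s_\beta$, not from $s_\beta$ alone), while the two crossed cosets give the two $I^\beta(\cdot)$ terms; with only one crossed coset, as you wrote, you could not produce the two induced summands in the stated formulas. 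With that count fixed, the rest of your verification goes through as described.
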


In the following sections, we will determine how the duality operator acts when
$I_{\gamma} (s, \delta (\chi))$ and $I_{\gamma} (s, \chi \circ \det)$ ($\gamma =
\alpha$ or $\beta$) reduce. We will discuss based on the value of $s$ and $\chi$ in $I_{\alpha} (s, \delta (\chi))$ and $I_{\alpha} (s, \chi \circ \det)$. Excluding the cases listed in  \cite[Theorem 3.1]{Muic1997}, \cite[Lemma 3.1]{Muic1997} tells us that $D_{G_2} (I_{\gamma} (s,
\delta (\chi))) = I_{\gamma} (s, \chi \circ \det)$ if $\chi$ is a unitary
character.

\subsection{Case by case discussion, continued}
\subsubsection{Case \refeq{Muic1997itm:2} within the case $\mathfrak{s} = [T, 1]_G$ Part I}
We start by discussing the case when $I^{\alpha}(\nu^{\mp 1 -s} \otimes \xi_2 \nu^{-s})$ is
  reducible. This condition is equivalent to
\[
  \nu^{\mp 1} \otimes \xi_2 \in \{ \nu \otimes 1, \ \nu \otimes
  \nu^{2}, \ \nu^{-1} \otimes 1 , \ \nu^{-1} \otimes \nu^{-2} \}.
  \]

We first discuss the case when $\nu^{\mp 1} \otimes \xi_2 \in \{ \nu \otimes 1, \ \nu^{-1}
  \otimes 1 \}$. This is equivalent to $s =1/2$ and $\chi = 1$ in Lemma \ref{lem:ra} and Lemma \ref{lem:rb}. This part is a special case of the cases when $s =1/2$ and $\chi^2 = 1$ or
$\chi^3=1$, but the arguments are slightly different because

\begin{enumerate}[(a)]
\item Some representations will be reducible, like $I^{\alpha} (\nu \otimes 1)$
  in \refeq{eq:raIaSt1}.
\item $\nu \otimes 1$ and $\nu \otimes \nu^{-1}$ are not regular \emph{i.e.}
  $s_{\alpha}s_{\beta}s_{\alpha}  \left( \nu \otimes 1\right) =\nu \otimes 1$, we can not do the length estimate.
\item We have less isomorphic representations.
\end{enumerate}

If $s=1/2$ and $\chi =1$, then by \cite[Theorem 3.1]{Muic1997} the four terms $I_{\gamma} (s, \delta (\chi))$
and $I_{\gamma} (s, \chi \circ \det)$, $\gamma = \alpha$, $\beta$ all reduces.

If $s=1/2$ and $\chi=1$, then $I( 1 \otimes \nu^{-1}) = I (\nu ^{-1} \otimes
\nu) = I(1 \otimes \nu) \ [\operatorname{R}(G_2)] $ (we omit the terms differ by
$s_{\alpha}$). And as representations, we have one isomorphism only $I (\nu \otimes \nu^{-1}) \cong
I(\nu^{-1} \otimes \nu)$.

We have
\begin{equation}
  \label{eq:IaIa=IbIb}
   I_{\beta} (\nu^{1/2} \delta (1))+I_{\beta} (\nu^{1/2}  \circ \det)
  (=I (1 \otimes \nu))=I_{\alpha} (\nu^{1/2}  \circ \det) + I_{\alpha}
  (\nu^{1/2} \delta (1))   \ [\operatorname{R}(G_2)],
\end{equation}
where $I_{\beta} (\nu^{1/2}
  \delta (1))$ and $I_{\alpha} (\nu^{1/2}  \circ \det) $ are both
  subrepresentations of $I(1 \otimes \nu)$.

\begin{cor}\label{cor:ra3}
  If we take $s=1/2$, $\chi =1$ in Lemma \ref{lem:ra}, we
  have in $\operatorname{R}(L_{\alpha})$:
  \begin{equation}\label{eq:raIaSt3}
    \begin{aligned}
    r_{\alpha} (I_{\alpha}(1/2, \delta (1))) &= \nu^{1/2} \delta (1) +\nu^{-1/2} \delta (1) + I^{\alpha} (\nu  \otimes  1)+I^{\alpha} (\nu \otimes \nu^{-1} ) \\
    & = 2 \nu^{1/2} \delta (1) +\nu^{-1/2} \delta (1)+\nu^{1/2} \circ \det +I^{\alpha} (\nu \otimes \nu^{-1} ).
    \end{aligned}
\end{equation}

\begin{equation}\label{eq:raIaTriv3}
  \begin{aligned}
    r_{\alpha} (I_{\alpha}(1/2,   1_{\GL_2})) &= \nu^{1/2}   \circ \det +\nu^{-1/2} \circ \det + I^{\alpha} ( 1 \otimes \nu^{-1} )+I^{\alpha} (\nu   \otimes \nu^{-1} )\\
    &= \nu^{1/2}   \circ \det +2 \nu^{-1/2} \circ \det + \nu^{-1/2} \delta (1) +I^{\alpha} (\nu   \otimes \nu^{-1} ).
     \end{aligned}
\end{equation}

\begin{equation}\label{eq:raIbSt3}
   \begin{aligned}
     r_{\alpha} (I_{\beta}(1/2, \delta (1))) &= I^{\alpha} ( 1 \otimes \nu) + I^{\alpha} (\nu \otimes \nu^{-1} ) + I^{\alpha} (\nu  \otimes  1) \\
     & =2\nu^{1/2} \delta (1) + 2\nu^{1/2}  \circ  \det  + I^{\alpha} (\nu \otimes \nu^{-1} ).
      \end{aligned}
\end{equation}
\begin{equation}\label{eq:raIbTriv3}
   \begin{aligned}
     r_{\alpha} (I_{\beta}(1/2,  1_{\GL_2})) & = I^{\alpha} (\nu  \otimes \nu^{-1}) + I^{\alpha} (\nu^{-1} \otimes  1) + I^{\alpha} ( 1 \otimes \nu^{-1} ) \\
     & = 2\nu^{-1/2} \delta (1) +2 \nu^{-1/2}  \circ \det+ I^{\alpha} (\nu  \otimes \nu^{-1}) .
     \end{aligned}
\end{equation}

\end{cor}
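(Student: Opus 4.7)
The plan is to derive each of the four displayed equalities by a direct specialization of Lemma \ref{lem:ra} at $s=1/2$ and $\chi=1$, followed by a decomposition of the resulting reducible principal series on $L_{\alpha}$. Substituting these values into the four formulas of the lemma immediately produces the first of the two expressions on each line of \refeq{eq:raIaSt3}, \refeq{eq:raIaTriv3}, \refeq{eq:raIbSt3}, and \refeq{eq:raIbTriv3}, so the remaining content of the corollary is to simplify the four principal series $I^{\alpha}(\nu\otimes 1)$, $I^{\alpha}(1\otimes \nu^{-1})$, $I^{\alpha}(1\otimes \nu)$ and $I^{\alpha}(\nu^{-1}\otimes 1)$, which happen to be reducible at this specialization.

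For each of these four terms I would apply Proposition \ref{prop:Mgamma}. Part (1) detects reducibility via the condition $(\chi_1\otimes\chi_2)\circ\alpha^{\vee}=\nu^{\pm 1}$, and part (2) pins down the composition factors as a generalized Steinberg and a generalized trivial. Combined with the twist convention $\nu^s\otimes I^{\alpha}(\chi_1\otimes\chi_2)=I^{\alpha}(\nu^s\chi_1\otimes\nu^s\chi_2)$ from Section \ref{subsubsec:GL2Rep}, one obtains for example $I^{\alpha}(\nu\otimes 1)=\nu^{1/2}\otimes I^{\alpha}(\nu^{1/2}\otimes\nu^{-1/2})=\nu^{1/2}\delta(1)+\nu^{1/2}\circ\det$ in $\operatorname{R}(L_{\alpha})$, and analogously for $I^{\alpha}(1\otimes\nu^{-1})=\nu^{-1/2}\delta(1)+\nu^{-1/2}\circ\det$. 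The two remaining terms $I^{\alpha}(1\otimes\nu)$ and $I^{\alpha}(\nu^{-1}\otimes 1)$ are first replaced by $I^{\alpha}(\nu\otimes 1)$ and $I^{\alpha}(1\otimes \nu^{-1})$ respectively using the $s_{\alpha}$-invariance in Proposition \ref{prop:Mgamma}(1) (valid in $\operatorname{R}(L_{\alpha})$), and then decomposed as above. The final summand $I^{\alpha}(\nu\otimes\nu^{-1})$ appearing in every formula is irreducible because $(\nu\otimes\nu^{-1})\circ\alpha^{\vee}=\nu^{2}\notin\{\nu,\nu^{-1}\}$, and it is therefore carried unchanged into the simplified right-hand side.

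Assembling these decompositions back into the first-line formulas and collecting like terms yields the second line of each equation. There is no genuine obstacle here: the proof is essentially bookkeeping of twists and multiplicities in the Grothendieck group, and the only point that demands care is applying the $s_{\alpha}$-invariance in the mixed cases \refeq{eq:raIbSt3} and \refeq{eq:raIbTriv3} before invoking Proposition \ref{prop:Mgamma}(2), since part (2) is stated for the specific normalization $I^{\alpha}(\nu^{1/2}\chi\otimes\nu^{-1/2}\chi)$.
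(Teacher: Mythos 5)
Your proof is correct and is essentially the paper's own (implicit) argument: the corollary is exactly the specialization $s=1/2$, $\chi=1$ of Lemma \ref{lem:ra}, followed by the Grothendieck-group decompositions $I^{\alpha}(\nu\otimes 1)=\nu^{1/2}\delta(1)+\nu^{1/2}\circ\det$, $I^{\alpha}(1\otimes\nu^{-1})=\nu^{-1/2}\delta(1)+\nu^{-1/2}\circ\det$, etc., with $I^{\alpha}(\nu\otimes\nu^{-1})$ left intact since it is irreducible. One minor citation point: the replacement $I^{\alpha}(1\otimes\nu)=I^{\alpha}(\nu\otimes 1)$ in $\operatorname{R}(L_{\alpha})$ should be justified by the semisimplification identity $I^{\gamma}\left(s_{\gamma}(\chi_1\otimes\chi_2)\right)=I^{\gamma}(\chi_1\otimes\chi_2)$ stated among the paper's conventions (from \cite[Lemma 5.4 (iii)]{BernsteinDeligneKazhdan1986}) rather than by Proposition \ref{prop:Mgamma}(1), which asserts the isomorphism only for irreducible principal series; alternatively, the ``resp.'' clause of Proposition \ref{prop:Mgamma}(2) already covers the normalization $I^{\alpha}(\nu^{-1/2}\chi\otimes\nu^{1/2}\chi)$, so no swap is needed at all.
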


\begin{cor}\label{cor:rb3}
  If we take $s=1/2$, $\chi =1$ in lemma \ref{lem:rb}, we
  have in $\operatorname{R}(L_{\beta})$:
  \begin{equation}\label{eq:rbIbSt3}
    \begin{aligned}
      r_{\beta} (I_{\beta}(1/2, \delta (1))) & = \nu^{1/2} \delta (1) + \nu^{-1/2} \delta (1) + I^{\beta} (\nu \otimes  1) + I^{\beta} ( 1 \otimes \nu )\\
      & = 2 \nu^{1/2} \delta (1) + \nu^{-1/2} \delta (1) +\nu^{1/2} \circ \det+ I^{\beta} (\nu \otimes  1) .
\end{aligned}
\end{equation}

  \begin{equation}\label{eq:rbIbTriv3}
    \begin{aligned}
      r_{\beta} (I_{\beta}(1/2,    1_{\GL_2}))  & = \nu^{1/2}  \circ \det + \nu^{-1/2}   \circ \det + I^{\beta} (\nu^{-1} \otimes \nu ) + I^{\beta} (\nu^{-1}  \otimes  1) \\
      &=  \nu^{1/2}  \circ \det +2 \nu^{-1/2}   \circ \det + \nu^{-1/2} \delta (1) + I^{\beta} (\nu^{-1}  \otimes  1).
      \end{aligned}
\end{equation}

\begin{equation}\label{eq:rbIaSt3}
    \begin{aligned}
      r_{\beta} (I_{\alpha}(1/2, \delta (1))) &= I^{\beta} (\nu  \otimes  1) + I^{\beta} (\nu^{-1}  \otimes \nu ) + I^{\beta} ( 1 \otimes \nu )\\
      &= \nu^{1/2} \delta (1) + \nu^{1/2}  \circ \det+\nu^{-1/2} \delta (1) + \nu^{-1/2} \circ \det + I^{\beta} (\nu  \otimes  1) .
    \end{aligned}
\end{equation}

 \begin{equation}\label{eq:rbIaTriv3}
   \begin{aligned}
     r_{\beta} \left(I_{\alpha}(1/2,  1_{\GL_2}) \right)& = I^{\beta} ( 1 \otimes \nu ) +I^{\beta}(\nu^{-1} \otimes 1) + I^{\beta} (\nu^{-1}  \otimes \nu)\\
      &= \nu^{-1/2} \delta (1) + \nu^{-1/2}  \circ \det+  \nu^{1/2} \delta (1) + \nu^{1/2} \circ \det +I^{\beta}(\nu^{-1} \otimes 1) .
     \end{aligned}
 \end{equation}

In \refeq{eq:rbIaTriv3} and \refeq{eq:rbIaSt3}, we used \cite[Lemma 5.4
(iii)]{BernsteinDeligneKazhdan1986}:
\[I^{\beta}(\chi_1 \otimes \chi_2) = I^{\beta}
  \left(s_{\beta}(\chi_1 \otimes \chi_2) \right).
\]
in $\operatorname{R}(L_{\beta})$.
\end{cor}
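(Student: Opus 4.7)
The plan is to specialize Lemma \ref{lem:rb} at $s = 1/2$ and $\chi = 1$ in each of its four formulas, and then to decompose every $I^{\beta}(\chi_1 \otimes \chi_2)$ factor that becomes reducible at this specialization by means of Proposition \ref{prop:Mgamma}(3). That is, the corollary is a direct specialization of the lemma together with a simplification of the resulting reducible induced representations.

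Concretely, substituting $s = 1/2$, $\chi = 1$ into each of the four formulas of Lemma \ref{lem:rb} and collecting terms yields the first-line expression in each of \refeq{eq:rbIbSt3}, \refeq{eq:rbIbTriv3}, \refeq{eq:rbIaSt3}, \refeq{eq:rbIaTriv3}. The only reducible pieces that occur are $I^{\beta}(1 \otimes \nu)$ and $I^{\beta}(\nu^{-1} \otimes \nu)$. Applying Proposition \ref{prop:Mgamma}(3) with $\chi = \nu^{1/2}$ (respectively $\chi = \nu^{-1/2}$) identifies them, in $\operatorname{R}(L_\beta)$, as
\[
I^{\beta}(1 \otimes \nu) = \nu^{1/2}\delta(1) + \nu^{1/2} \circ \det, \qquad I^{\beta}(\nu^{-1} \otimes \nu) = \nu^{-1/2}\delta(1) + \nu^{-1/2} \circ \det.
\]
Substituting these decompositions into each first-line expression and collecting like terms produces the second-line expression stated in each identity.

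For the two identities \refeq{eq:rbIaSt3} and \refeq{eq:rbIaTriv3} arising from $r_\beta \circ I_\alpha$, I additionally invoke the Weyl symmetry $I^{\beta}(\chi_1 \otimes \chi_2) \cong I^{\beta}(s_\beta(\chi_1 \otimes \chi_2))$ in $\operatorname{R}(L_\beta)$ coming from \cite[Lemma 5.4 (iii)]{BernsteinDeligneKazhdan1986}, using the character identification \refeq{eq:Characterab} to compute the $s_\beta$-action on $\chi_1 \otimes \chi_2$ explicitly. This is precisely what allows an intermediate term not already of the form $I^{\beta}(\nu^{-1/2}\chi \otimes \nu)$ to be rewritten so that Proposition \ref{prop:Mgamma}(3) applies verbatim.

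I expect the main obstacle to be careful bookkeeping: one must verify case by case which $I^{\beta}$-terms become reducible after specialization, extract both composition factors, and correctly identify the various twists of $\delta(1)$ and $1_{\GL_2}$ (and match $\nu^{s}\delta(\chi)$ with $\delta(\nu^s \chi)$) so that terms coming from different inducing pairs combine consistently. Once these identifications are in place, the four equalities reduce to elementary arithmetic in $\operatorname{R}(L_\beta)$.
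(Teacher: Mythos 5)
Your proposal is correct and follows essentially the same route as the paper: Corollary \ref{cor:rb3} is just Lemma \ref{lem:rb} specialized at $s=1/2$, $\chi=1$, with the two reducible factors $I^{\beta}(1\otimes\nu)=\nu^{1/2}\delta(1)+\nu^{1/2}\circ\det$ and $I^{\beta}(\nu^{-1}\otimes\nu)=\nu^{-1/2}\delta(1)+\nu^{-1/2}\circ\det$ decomposed via Proposition \ref{prop:Mgamma}(3) (after the appropriate $\nu^{\pm 1/2}$ twist), and the $s_{\beta}$-symmetry from \cite[Lemma 5.4 (iii)]{BernsteinDeligneKazhdan1986} invoked exactly where the paper notes it. Your bookkeeping of which $I^{\beta}$-terms reduce ($\chi_2=\nu^{\pm1}$) and of the twist conventions matches the paper's computation.
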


The following Proposition is from \cite{Muic1997}, but our proof is a bit
different which enables us to know more about the images of the composition
factors under Jacquet functors.
\begin{prop}[G. Muic Proposition 4.3]\label{prop:I1v}
  Suppose that $\chi =1$, $s=1/2$ then we have the following.
  \begin{enumerate}[(1)]
  \item The induced representation $I(1 \otimes \nu )$ contains exactly two
    irreducible subrepresentations $\pi(1)$ and $\pi'(1)$. We have $r^{G_2}_T (\pi
    (1)) = 1 \otimes \nu$, $r^{G_2}_T (\pi'(1)) = 1 \otimes \nu + 2 (\nu \otimes
    1)$.
  \item In $\operatorname{R}(G_2)$, we have
    \begin{equation}\label{eq:IaIbAll3}
           \begin{aligned}
             I_{\alpha} (1/2, \delta (1)) & = \pi'(1) +J_{\alpha} (1/2, \delta (1)) +J_{\beta} (1/2, \delta (1)), \\
             I_{\beta} (1/2, \delta (1)) & = \pi (1) + \pi'(1) +J_{\beta} (1/2, \delta (1)), \\
             I_{\alpha} (1/2, 1_{\GL_2}) &= \pi (1) +J_{\beta} (1, \pi (1,1))+J_{\beta} (1/2, \delta (1)), \\
             I_{\beta} (1/2,  1_{\GL_2}) & = J_{\beta} (1, \pi (1,1)) +J_{\beta} (1/2, \delta (1)) +J_{\alpha} (1/2, \delta (1)).
           \end{aligned}
         \end{equation}
  \end{enumerate}
\end{prop}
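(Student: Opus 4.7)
The plan is to combine the explicit Jacquet-module identities of Corollaries \ref{cor:ra3} and \ref{cor:rb3} with the Grothendieck-group identity \refeq{eq:IaIa=IbIb} and Frobenius reciprocity, so as to assemble the composition series and simultaneously record the image of each constituent under $r_\emptyset^{G_2}$.

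First I would compute $r_\emptyset^{G_2} = r_T^{L_\gamma} \circ r_\gamma$ on each of the four standard modules $I_\alpha(1/2,\delta(1))$, $I_\beta(1/2,\delta(1))$, $I_\alpha(1/2,1_{\GL_2})$, $I_\beta(1/2,1_{\GL_2})$, taking $\gamma=\alpha$ in Corollary \ref{cor:ra3} and $\gamma=\beta$ in Corollary \ref{cor:rb3} and then applying the $\GL_2$ Jacquet-module identities of Section \ref{subsubsec:GL2Rep}. This gives explicit multisets of torus characters. Next, using that $\delta(1)$ (respectively $1_{\GL_2}$) is the unique irreducible subrepresentation of $I^\alpha(\nu^{1/2}\otimes\nu^{-1/2})$ (respectively of $I^\alpha(\nu^{-1/2}\otimes\nu^{1/2})$) by Proposition \ref{prop:Mgamma}, exactness of $\NI_{P_\gamma}^{G}$ gives embeddings $I_\beta(1/2,\delta(1))\hookrightarrow I(1\otimes\nu)$ and $I_\alpha(1/2,1_{\GL_2})\hookrightarrow I(1\otimes\nu)$, consistent with \refeq{eq:IaIa=IbIb}.

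By Frobenius reciprocity in the form
\[
  \dim\Hom_{G_2}(\pi, I(1\otimes\nu)) = \dim\Hom_T(r_\emptyset^{G_2}(\pi), 1\otimes\nu),
\]
irreducible subrepresentations of $I(1\otimes\nu)$ correspond to copies of the character $1\otimes\nu$ in $r_\emptyset^{G_2}(\pi)$ for irreducible constituents $\pi$. The computation in the first step shows that exactly two candidates arise: the unique irreducible subrepresentation of $I_\beta(1/2,\delta(1))$, whose $r_\emptyset^{G_2}$ must reduce to a single copy of $1\otimes\nu$ because $\nu^{1/2}\delta(1)$ is a twist of a discrete series of $L_\beta$ contributing this torus character with multiplicity one, and the unique irreducible subrepresentation of $I_\alpha(1/2,1_{\GL_2})$, whose $r_\emptyset^{G_2}$ inherits the larger contribution $1\otimes\nu+2(\nu\otimes 1)$ coming from $I^\alpha$-Jacquet of $1_{\GL_2}$. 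These are the $\pi(1)$ and $\pi'(1)$ of the statement; distinctness is clear since their Jacquet modules differ. That no further irreducible subrepresentation exists follows by checking that the total multiplicity of $1\otimes\nu$ in $r_\emptyset^{G_2}(I(1\otimes\nu))$ (read off from \refeq{eq:r0InuFmp1xi2}) is exactly exhausted by $\pi(1)$ and $\pi'(1)$.

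For part (2), the four identities in (\refeq{eq:IaIbAll3}) are then obtained by matching each total Jacquet module computed in the first step against the sum of the Jacquet modules of the known Langlands quotients $J_\alpha(1/2,\delta(1))$, $J_\beta(1/2,\delta(1))$, $J_\beta(1,\pi(1,1))$ (unique irreducible quotients by the Langlands quotient theorem \cite[Theorem 3.5]{Konno2003}) together with $\pi(1)$ and $\pi'(1)$, using \refeq{eq:IaIa=IbIb} to force repeated constituents to cancel consistently across the two decompositions. The main obstacle I expect is precisely in this matching: the character $1\otimes\nu$ is not regular (its $W$-stabilizer is nontrivial), so a naive length count via \cite{Rodier1981} is unavailable and the Jacquet supports of distinct irreducible constituents overlap heavily. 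To resolve this I would use \emph{both} $r_\alpha$ and $r_\beta$ simultaneously, not merely $r_\emptyset^{G_2}$, since the intermediate Jacquet modules distinguish the Langlands quotients attached to $P_\alpha$ from those attached to $P_\beta$ and thereby pin down each composition factor uniquely.
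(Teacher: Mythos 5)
Your toolkit (the identities of Corollaries \ref{cor:ra3} and \ref{cor:rb3}, Frobenius reciprocity, and \refeq{eq:IaIa=IbIb}) is the same as the paper's, but part (1) of your argument contains a concrete error. You attribute the Jacquet module $1 \otimes \nu + 2(\nu \otimes 1)$ to the unique irreducible subrepresentation of $I_{\alpha}(1/2, 1_{\GL_2})$, ``coming from the $I^{\alpha}$-Jacquet of $1_{\GL_2}$''. This is impossible: $r_T^{L_{\alpha}}(\nu^{1/2} \circ \det)$ is the single character $1 \otimes \nu$, and applying $r_T^{L_{\alpha}}$ to \refeq{eq:raIaTriv3} gives $r_{\emptyset}\bigl(I_{\alpha}(1/2, 1_{\GL_2})\bigr) = 1 \otimes \nu + 2(\nu^{-1} \otimes 1) + 1 \otimes \nu^{-1} + \nu \otimes \nu^{-1} + \nu^{-1} \otimes \nu$, which does not contain $\nu \otimes 1$ at all, so no constituent of $I_{\alpha}(1/2, 1_{\GL_2})$ can have $\nu \otimes 1$ in its Jacquet module. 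In fact the unique irreducible subrepresentation of $I_{\alpha}(1/2, 1_{\GL_2})$ is $\pi(1)$, with $r_T^{G_2}(\pi(1)) = 1 \otimes \nu$, while the constituent with $r_T^{G_2} = 1 \otimes \nu + 2(\nu \otimes 1)$ is $\pi'(1)$, obtained as the unique irreducible subrepresentation of $I_{\alpha}(1/2, \delta(1))$ (a subobject of $I(\nu \otimes 1)$, not obviously of $I(1 \otimes \nu)$). Your appeal to ``the unique irreducible subrepresentation of $I_{\beta}(1/2, \delta(1))$'' is also unjustified: $\nu^{1/2}\delta(1)$ occurs with multiplicity two in \refeq{eq:rbIbSt3}, so the multiplicity-one trick fails there; uniqueness of the socle is available (and is what the paper uses) only for $I_{\alpha}(1/2, 1_{\GL_2})$ and $I_{\alpha}(1/2, \delta(1))$, via the single occurrences of $\nu^{1/2} \circ \det$ in \refeq{eq:raIaTriv3} and of $I^{\beta}(\nu \otimes 1)$ in \refeq{eq:rbIaSt3}.

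For part (2) your plan is circular: the Jacquet modules of $\pi(1)$, $\pi'(1)$, $J_{\alpha}(1/2,\delta(1))$, $J_{\beta}(1/2,\delta(1))$, $J_{\beta}(1,\pi(1,1))$ against which you propose to ``match'' are not known in advance; determining them is the substance of the proof (Proposition \ref{prop:rarbAll3} is an output, not an input), and, as you yourself note, the non-regularity of $1 \otimes \nu$ makes a matching of overlapping character multisets underdetermined even if both $r_{\alpha}$ and $r_{\beta}$ are used. What is missing is the mechanism the paper uses to break this ambiguity: the embeddings of the Langlands quotients into the opposite-sign standard modules ($J_{\beta}(1/2,\delta(1)) \hookrightarrow I_{\beta}(-1/2,\delta(1))$, $J_{\alpha}(1/2,\delta(1)) \hookrightarrow I_{\alpha}(-1/2,\delta(1))$, $J_{\beta}(1,\pi(1,1)) \hookrightarrow I_{\alpha}(-1/2,1_{\GL_2})$), which force specific characters into their Jacquet modules; the argument that $I_{\beta}(1/2,\delta(1))$ and $I_{\beta}(1/2,1_{\GL_2})$ share exactly the constituent $J_{\beta}(1/2,\delta(1))$ (otherwise $I_{\beta}(1/2,\delta(1))$ would be irreducible, contradicting \cite[Theorem 3.1]{Muic1997}), which pins down $r_{\alpha}$ and $r_{\beta}$ of $J_{\beta}(1/2,\delta(1))$; the analogous intersection bookkeeping applied to \refeq{eq:IaIa=IbIb} after subtracting $2J_{\beta}(1/2,\delta(1))$, which produces $\pi(1)$ and identifies it inside both $I_{\beta}(1/2,\delta(1))$ and $I_{\alpha}(1/2,1_{\GL_2})$; and the final case analysis determining $r_{\beta}(\pi'(1))$, which needs the external input of \cite[Lemma 3.1]{Muic1997} (through $D_{G_2}$) to exclude one subcase. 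Without these steps neither the lists in \refeq{eq:IaIbAll3} nor the Jacquet-module values you invoke in part (1) are pinned down.
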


\begin{proof}

  By Langlands quotient theorem applied to the following: $I(\nu \otimes
  1) = I_{\beta} (1,\pi (1,1))$, we
  see $J_{\beta} (1, \pi (1, 1))$ is the unique irreducible quotient of $I( \nu
  \otimes 1)$. Consider the exact sequence:

  \[
0 \rightarrow I_{\alpha} (1/2, \delta (1)) \rightarrow I (\nu \otimes 1) \rightarrow
I_{\alpha} (1/2, 1_{\GL_2}) \rightarrow 0 .
  \]
We know $J_{\beta} (1, \pi (1, 1))$ is the unique irreducible quotient of
$I_{\alpha} (1/2, 1_{\GL_2})$.

  Then Langlands quotient theorem applied to $I_{\alpha} (1/2, \delta (1))$
  and $I_{\beta} (1/2, \delta (1))$, we find two more composition factors: $J_{\alpha} (1/2, \delta (1))$ and $J_{\beta} (1/2,
  \delta (1))$ as the unique irreducible quotient of  $I_{\alpha} \left(1/2,
    \delta (1) \right)$
  and $I_{\beta} \left(1/2, \delta (1) \right)$ respectively.

  We have
  \begin{equation}
    \label{eq:JbIv-1vinclus}
    J_{\beta} \left(1/2, \delta (1) \right) \hookrightarrow I_{\beta} \left(-1/2, \delta (1) \right) \hookrightarrow I(\nu^{-1} \otimes \nu) \cong I(\nu \otimes \nu^{-1}).
  \end{equation}

  From
  \begin{equation*}
    \begin{aligned}
    0 \neq & \Hom_{G_2}\left( J_{\beta} \left(1/2, \delta (1) \right),  I(\nu^{-1} \otimes
  \nu) \right) = \Hom_{L_{\alpha}} \left( r_{\alpha} \left(J_{\beta} (1/2, \delta (1)) \right),
  I^{\alpha}(\nu^{-1} \otimes \nu) \right) ,\\ 0 \neq & \Hom_{G_2} \left( J_{\beta} \left(1/2, \delta (1) \right), I_{\beta} \left( -1/2, \delta (1) \right) \right)=   \Hom_{L_{\beta}} \left( r_{\beta} \left( J_{\beta} \left(1/2, \delta (1) \right) \right),
  \nu^{-1/2} \delta (1)) \right), \\
  0 \neq & \Hom_{G_2}\left( J_{\beta} (1/2, \delta (1)),I(\nu^{-1} \otimes
  \nu) \right)=\Hom_{L_{\beta}} \left( r_{\beta}\left( J_{\beta} \left(1/2, \delta (1) \right) \right),
  I^{\beta}(\nu \otimes \nu^{-1}) \right).
    \end{aligned}
  \end{equation*}
We conclude that $r_{\alpha}(J_{\beta} (1/2, \delta (1)))$ contains
  $I^{\alpha}(\nu^{-1} \otimes \nu)$, $r_{\beta}(J_{\beta} (1/2, \delta (1)))$
  contains $ \nu^{-1/2} \delta (1)$ and  $r_{\beta}(J_{\beta} (1/2, \delta
  (1)))$ also contains $\nu^{1/2}
  \delta (1)$ or $\nu^{1/2} \circ \det$, or both.

We know that  $I_{\beta} (1/2, 1_{\GL_2})$ is a subrepresentation of $I( \nu \otimes
  \nu^{-1})$ and we must have
  \begin{equation}
    \label{eq:Muic4.21}
   I_{\beta}\left( 1/2, \delta (1) \right) \left( =I_{\beta} \left( -1/2, \delta (1) \right) [ \operatorname{R}(G_2)] \right)  \bigcap_{[\operatorname{R}(G_2)]} I_{\beta} (1/2, 1_{\GL_2}) \neq 0 ,
  \end{equation}
 (where the intersection $ \bigcap_{[\operatorname{R}(G_2)]}$ in the above means we
 take their common factors in $[\operatorname{R}(G_2)]$), otherwise
  \[
J_{\beta} (1/2, \delta (1)) \hookrightarrow I_{\beta} (-1/2, \delta (1))
\hookrightarrow I(\nu \otimes \nu^{-1}) / I_{\beta} (1/2, 1_{\GL_2}) \cong
I_{\beta} (1/2, \delta (1)).
\]
This implies that $J_{\beta} (1/2, \delta (1))$ is a subrepresentation of
$I_{\beta} \left( 1/2, \delta (1) \right)$, then $I_{\beta} (1/2, \delta (1))$ is irreducible.
This contradicts \cite[Theorem 3.1]{Muic1997}.

From \refeq{eq:Muic4.21}, we compare \refeq{eq:raIbSt3} and \refeq{eq:raIbTriv3}
to find out the image of common factor under $r_{\alpha}$ is $I^{\alpha} (\nu
\otimes \nu^{-1})$, thus the only common factor of $I_{\beta}(1/2, \delta (1)) \bigcap_{[\operatorname{R}(G_2)]}
I_{\beta} (1/2, 1_{\GL_2})$   has to be  $J_{\beta} (1/2, \delta
(1))$, and $r_{\alpha} (J_{\beta} (1/2, \delta (1))) = I^{\alpha} (\nu
\otimes \nu^{-1}) $, $r_{\beta} (J_{\beta} (1/2, \delta (1))) = \nu^{-1/2}
\delta (1) + \nu^{1/2} \circ \det \ [\operatorname{R}(L_{\beta})]$. We find from \refeq{eq:raIaSt3} and
\refeq{eq:raIaTriv3} that $ I_{\alpha} (1/2, 1_{\GL_2})$
and $I_{\alpha}
  (1/2, \delta (1))$ both contain $J_{\beta} (1/2, \delta (1))$ as a
  composition factor.

Recall that $I_{\beta} (1/2, \delta (1))$, $I_{\alpha} (1/2,  1_{\GL_2}) $ are both
  subrepresentations of $I(1 \otimes \nu)$. We apply $-2J_{\beta} (1/2, \delta
  (1))$ to both sides of \refeq{eq:IaIa=IbIb}, and get
   \begin{equation}\label{eq:IaIa=IbIb2}
     \begin{aligned}
   & I_{\beta} (1/2, \delta (1))-J_{\beta} (1/2, \delta (1))+I_{\beta} (1/2, 1_{\GL_2})-J_{\beta} (1/2, \delta (1))   \\
    = &   I_{\alpha} (1/2, 1_{\GL_2})-J_{\beta} (1/2, \delta (1)) + I_{\alpha}
  (1/2, \delta (1)) -J_{\beta} (1/2, \delta (1))   \ [\operatorname{R}(G_2)],
     \end{aligned}
   \end{equation}
where $(I_{\beta} (1/2, \delta (1))-J_{\beta} (1/2, \delta (1)) ) \bigcap_{[\operatorname{R}(G_2)]} (I_{\beta}
(1/2, 1_{\GL_2})-J_{\beta} (1/2, \delta (1))) = \emptyset \ [\operatorname{R}(G_2)] $  by comparing the common terms in  \refeq{eq:raIbSt3} and \refeq{eq:raIbTriv3}.
  We have
  \[ \left( I_{\beta} (1/2, \delta (1)) \bigcap_{[\operatorname{R}(G_2)]} I_{\alpha} (1/2,
      1_{\GL_2}) \right) - J_{\beta} (1/2, \delta (1)) \neq 0.
  \]
  Otherwise
  \[
    I_{\alpha} (1/2, 1_{\GL_2})-J_{\beta} (1/2, \delta (1)) \subset
    I_{\beta} (1/2, 1_{\GL_2})-J_{\beta} (1/2, \delta (1)) \ [\operatorname{R}(G_2)]
  \]
  can be deduced from
  \refeq{eq:IaIa=IbIb2} whose  left hand side is a disjoint sum, but this will contradict  the fact that excluding the terms brought by
  $J_{\beta} (1/2, \delta (1))$, the remaining of \refeq{eq:raIaTriv3} is not included in the
  remaining of \refeq{eq:raIbTriv3}.
  Compare \refeq{eq:raIaTriv3} and
  \refeq{eq:raIbSt3} (for $r_{\beta}$, \refeq{eq:rbIaTriv3} and
  \refeq{eq:rbIbSt3} respectively), we know that besides $J_{\beta} (1/2, \delta (1))$, there is another irreducible
  subquotient contained in
\[
I_{\beta} (1/2, \delta (1)) \bigcap_{[\operatorname{R}(G_2)]} I_{\alpha} (1/2,
  1_{\GL_2})
\]
whose image under $r_{\alpha}$ is $\nu^{1/2} \det$ and under
  $r_{\beta}$ is $\nu^{1/2}\delta (1)$. If we look at any irreducible
  subrepresentation of $I_{\alpha} (\nu^{1/2}  \circ \det) $, by the adjointness
  of $r_{\alpha}$ and $I_{\alpha}$, we know the image of such irreducible
  subrepresentation under $r_{\alpha}$ must contain $\nu^{1/2} \det$. From the fact that $ r_{\alpha}(I_{\alpha} (1/2,
  1_{\GL_2}))$ contains $\nu^{1/2} \det$ with multiplicity one, we know the other
  composition factor of $I_{\beta} (1/2, \delta (1))  \bigcap_{[\operatorname{R}(G_2)]} I_{\alpha} (1/2,
  1_{\GL_2})$ is the unique irreducible subrepresentation of $ I_{\alpha} (1/2,
  1_{\GL_2})$, denoted by $\pi (1)$.

  Using the same argument to \refeq{eq:raIaSt3} and \refeq{eq:raIbTriv3} (for
  $r_{\beta}$, \refeq{eq:rbIbTriv3} and \refeq{eq:rbIaSt3}, respectively), we
  know that there is an irreducible subquotient of
  \[
    \left(I_{\beta} (1/2, 1_{\GL_2})
       \bigcap_{[\operatorname{R}(G_2)]} I_{\alpha} (1/2, \delta (1)) \right) - J_{\beta} (1/2, \delta (1))
    \]
whose image under $r_{\alpha}$ is $\nu^{-1/2} \delta (1)$, under $r_{\beta}$ is
$\nu^{-1/2} \circ \det$. But we have
\begin{equation}
  \label{eq:JaIa-1inclu}
  J_{\alpha} (1/2, \delta (1)) \hookrightarrow I_{\alpha} (-1/2, \delta (1)) \left(  = I_{\alpha} \left( 1/2, \delta (1) \right) [ \operatorname{R} (G_2)]\right)
\end{equation}
as the unique subrepresentation, $r_{\alpha}(J_{\alpha} (1/2, \delta (1))) $
contains $\nu^{-1/2} \delta (1)$. The multiplicity of $\nu^{-1/2} \delta (1)$ in
\refeq{eq:raIaSt3} is one, we thus conclude
\[
  I_{\beta} (1/2, 1_{\GL_2})
   \bigcap_{[\operatorname{R}(G_2)]} I_{\alpha} (1/2, \delta (1))  = J_{\beta} (1/2, \delta (1)) + J_{\alpha}
  (1/2, \delta (1)).
  \]

  From the inclusion
  \[
J_{\beta}(1, \pi (1,1)) \hookrightarrow I_{\alpha} (-1/2, 1_{\GL_2}) ,
  \]
we know $r_{\alpha} (J_{\beta} (1, \pi (1,1)))$ contains $\nu^{-1/2} \circ
\det$. Since $I_{\alpha} (-1/2, 1_{\GL_2}) \hookrightarrow I(\nu^{-1} \otimes 1)$,
we know $r_{\beta} (J_{\beta}(1, \pi (1,1)) ) $ contains $I^{\beta} (\nu^{-1}
\otimes 1)$.

We may apply similar argument to any irreducible subrepresentation of
$I_{\alpha} (1/2, \delta (1))$, and we find the image under $r_{\alpha}$ must
contain $\nu^{1/2} \delta (1)$, the image under $r_{\beta}$ of such
irreducible subrepresentation must contain $I^{\beta} (\nu \otimes 1)$ which is
of multiplicity $1$ in $r_{\beta} (I_{\alpha}(1/2, \delta (1)))$, thus there is
a unique irreducible subrepresentation of $I_{\alpha} (1/2, \delta (1))$. We
denote this representation by $\pi'(1)$.

Suppose that $r_{\beta} \pi'(1) =I^{\beta} (\nu \otimes 1) + m \nu^{1/2} \delta
(1)$ where $m=0$ or $1$. If $m =0$, then there may exist another
subquotient $\tau$ such that $r_{\beta} (\tau) = \nu^{1/2} \delta (1)$, if we
assume
\[
  r_{\alpha} (\pi'(1)) = \nu^{1/2} \delta (1)+ s\nu^{1/2} \delta (1) + t
  \nu^{1/2} \circ \det
\]
where $s, t = 0$ or $1$ but they do not equal to $1$ at the same time, then
\[
  r_{\alpha} (\tau) = (1-s)\nu^{1/2} \delta (1) + (1-t)
  \nu^{1/2} \circ \det .
\]
If $s=0$, $t=1$, then
\[
  0 \neq \Hom_{L_{\alpha}}
(r_{\alpha} (\tau), \nu^{1/2} \delta (1)) = \Hom_{G_2} (\tau, I_{\alpha} (1/2,
\delta (1)))
\]
would contradicts the uniqueness of $\pi'(1)$ as subrepresentation of
$I_{\alpha} (1/2, \delta (1))$. Similarly, if $s=1$, $t=0$, then it leads to a
contradiction to uniqueness of $\pi(1)$. If $s =t=0$, then $r_{\alpha} D_{G_2}
(\pi' (1))   $ is not included in $r_{\alpha} (I_{\alpha} (1/2, 1_{\GL_2}))$
contradicts \cite[Lemma 3.1]{Muic1997}.

We conclude that $m=1$, and
\[
  r_{\beta} \pi'(1) =I^{\beta} (\nu \otimes 1) + \nu^{1/2} \delta (1), \ r_{\alpha} \pi' (1) = 2 \nu^{1/2}
  \delta (1) + \nu^{1/2} \circ \det .
\]
By the same argument,
  \[
    r_{\beta} (J_{\beta} (1, \pi (1, 1))) = I^{\beta} (\nu^{-1} \otimes 1) +
    \nu^{-1/2} \circ \det,
  \]
  and
  \[
    r_{\alpha} (J_{\beta} (1, \pi (1, 1))) = 2
    \nu^{-1/2} \det + \nu ^{-1/2} \delta (1).
    \]
\end{proof}

We list the images of composition factors under $r_{\alpha}$ and $r_{\beta}$ in
the following proposition:

\begin{prop}\label{prop:rarbAll3}
  Based on the proof of Proposition \ref{prop:I1v}, we have the following
  \begin{equation}\label{eq:raraAll3}
  \begin{aligned}
    r_{\alpha} (\pi (1))= \nu^{1/2} \circ \det, & \ r_{\beta} (\pi (1)) = \nu^{1/2} \delta (1), \\
    r_{\alpha} (J_{\alpha} (1/2, \delta (\chi)))
    =\nu^{-1/2} \delta (1) , & \ r_{\beta} (J_{\alpha}(1/2, \delta (\chi))) = \nu^{-1/2} \circ \det, \\
    r_{\alpha} \left( J_{\beta} (1/2, \delta (1)) \right) = I^{\alpha} (\nu \otimes \nu^{-1}), & \  r_{\beta} \left( J_{\beta} (1/2, \delta (1)) \right) = \nu^{-1/2} \delta (1) + \nu^{1/2} \circ \det , \\
    r_{\alpha} (\pi'(1)) = 2 \nu^{1/2} \delta (1)+ \nu^{1/2} \circ \det, & \ r_{\beta} (\pi'(1)) =I^{\beta} (\nu \otimes 1) +  \nu^{1/2} \delta
(1) ,\\
    r_{\alpha} (J_{\beta} (1, \pi (1, 1)))  = 2 \nu^{-1/2} \circ \det + \nu^{-1/2} \delta (1), & \  r_{\beta} (J_{\beta} (1, \pi (1, 1))) = I^{\beta} (\nu^{-1} \otimes 1)
                                                                                                 + \nu^{-1/2} \circ \det.
  \end{aligned}
\end{equation}
\end{prop}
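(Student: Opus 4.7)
The plan is to extract and complete the computation of the Jacquet functors applied to each composition factor of $I(1\otimes\nu)$ by piecing together the identifications already established during the proof of Proposition~\ref{prop:I1v}. Most of the six pairs of identities in \refeq{eq:raraAll3} were in fact obtained inside that proof; the remaining ones follow from additivity of $r_\alpha$ and $r_\beta$ on the decomposition \refeq{eq:IaIbAll3} together with Frobenius reciprocity.

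\textbf{Step 1 (directly recovered from the proof of Proposition~\ref{prop:I1v}).} The identities
\[
r_\alpha(J_\beta(1/2,\delta(1))) = I^\alpha(\nu\otimes\nu^{-1}), \qquad r_\beta(J_\beta(1/2,\delta(1))) = \nu^{-1/2}\delta(1)+\nu^{1/2}\circ\det
\]
were derived there from the intersection analysis of $I_\beta(1/2,\delta(1))\cap_{[\mathrm{R}(G_2)]}I_\beta(1/2,1_{\GL_2})$ combined with \refeq{eq:raIbSt3}-\refeq{eq:raIbTriv3} and \refeq{eq:rbIbSt3}-\refeq{eq:rbIbTriv3}. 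The identities for $\pi'(1)$ were fixed by the $m,s,t\in\{0,1\}$ case analysis based on the uniqueness of $\pi(1)$ and $\pi'(1)$ as subrepresentations and the duality constraint from \cite[Lemma~3.1]{Muic1997}, which forced $m=1$ and yielded $r_\beta\pi'(1) = I^\beta(\nu\otimes 1) + \nu^{1/2}\delta(1)$ and $r_\alpha\pi'(1) = 2\nu^{1/2}\delta(1) + \nu^{1/2}\circ\det$. The same symmetric argument, applied to $J_\beta(1,\pi(1,1))$ sitting as a subrepresentation of $I_\alpha(-1/2,1_{\GL_2})$ and using $I_\alpha(-1/2,1_{\GL_2})\hookrightarrow I(\nu^{-1}\otimes 1)$, gives $r_\alpha J_\beta(1,\pi(1,1)) = 2\nu^{-1/2}\circ\det + \nu^{-1/2}\delta(1)$ and $r_\beta J_\beta(1,\pi(1,1)) = I^\beta(\nu^{-1}\otimes 1) + \nu^{-1/2}\circ\det$.

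\textbf{Step 2 (the two remaining identities).} For $\pi(1)$, which is the unique irreducible subrepresentation of $I_\alpha(1/2,1_{\GL_2})$, Frobenius reciprocity forces $r_\alpha(\pi(1))$ to contain $\nu^{1/2}\circ\det$ and $r_\beta(\pi(1))$ to contain $\nu^{1/2}\delta(1)$. Combining the decomposition
\[
[I_\alpha(1/2,1_{\GL_2})] = [\pi(1)] + [J_\beta(1,\pi(1,1))] + [J_\beta(1/2,\delta(1))]
\]
from \refeq{eq:IaIbAll3} with \refeq{eq:raIaTriv3} and \refeq{eq:rbIaTriv3}, and subtracting the already-known contributions of $J_\beta(1,\pi(1,1))$ and $J_\beta(1/2,\delta(1))$ established in Step~1, one is left with exactly $r_\alpha(\pi(1)) = \nu^{1/2}\circ\det$ and $r_\beta(\pi(1)) = \nu^{1/2}\delta(1)$.

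\textbf{Step 3.} The same bookkeeping applied to $J_\alpha(1/2,\delta(1))$, using
\[
[I_\alpha(1/2,\delta(1))] = [\pi'(1)] + [J_\alpha(1/2,\delta(1))] + [J_\beta(1/2,\delta(1))]
\]
from \refeq{eq:IaIbAll3}, and subtracting from \refeq{eq:raIaSt3} and \refeq{eq:rbIaSt3} the images of $\pi'(1)$ and $J_\beta(1/2,\delta(1))$ computed in Step~1, yields $r_\alpha J_\alpha(1/2,\delta(1)) = \nu^{-1/2}\delta(1)$ and $r_\beta J_\alpha(1/2,\delta(1)) = \nu^{-1/2}\circ\det$. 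The main obstacle is purely bookkeeping: one must verify that the multiplicities of each irreducible constituent of $r_\gamma(I_\alpha(1/2,?))$ listed in Corollaries~\ref{cor:ra3}-\ref{cor:rb3} exactly absorb the contributions of the other two composition factors, so that the remainder is forced and a single candidate survives for $r_\gamma(\pi(1))$ and $r_\gamma(J_\alpha(1/2,\delta(1)))$.
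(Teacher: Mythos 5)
Your proposal is correct and follows essentially the paper's route: Proposition \ref{prop:rarbAll3} is just the bookkeeping record of what the proof of Proposition \ref{prop:I1v} establishes, and your Steps 2--3 (subtracting the known Jacquet modules of $J_{\beta}(1/2,\delta(1))$, $\pi'(1)$ and $J_{\beta}(1,\pi(1,1))$ from \refeq{eq:raIaSt3}, \refeq{eq:raIaTriv3}, \refeq{eq:rbIaSt3}, \refeq{eq:rbIaTriv3} via \refeq{eq:IaIbAll3}) reproduce the identities for $\pi(1)$ and $J_{\alpha}(1/2,\delta(1))$ that the paper in fact obtains directly inside that proof through the intersection arguments; the multiplicity checks all go through. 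The only small inaccuracy is the claim that Frobenius reciprocity by itself forces $\nu^{1/2}\delta(1)\subset r_{\beta}(\pi(1))$ (that would need $\pi(1)$ to be known a priori as a subrepresentation of $I_{\beta}(1/2,\delta(1))$), but this remark is redundant since your subtraction argument already yields the result.
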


\begin{prop}\label{prop:Duality3}
  We compute the Aubert-Zelevinsky duality of all the irreducible
  representations listed above:
  \begin{equation}\label{eq:Duality3}
    \begin{aligned}
      D_{G_2} (\pi (1)) = J_{\alpha} (1/2, \delta (1)),& \quad D_{G_2} (J_{\beta} (1/2, \delta (1))) =J_{\beta} (1/2, \delta (1)) \\
      D_{G_2} ( \pi ' (1))&=  J_{\beta} (1, \pi (1,1)).
    \end{aligned}
  \end{equation}
\end{prop}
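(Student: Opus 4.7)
The plan is to apply the defining formula
\begin{equation*}
D_{G_2}([\pi]) = [\pi] - [I_\alpha(r_\alpha(\pi))] - [I_\beta(r_\beta(\pi))] + [I(r_\emptyset(\pi))]
\end{equation*}
to each of $\pi(1)$, $J_\beta(1/2, \delta(1))$, $\pi'(1)$ in turn, reading off the required Jacquet modules from Proposition \ref{prop:rarbAll3} together with part (1) of Proposition \ref{prop:I1v}, and then simplifying using the composition series \eqref{eq:IaIbAll3} and the identity \eqref{eq:IaIa=IbIb}. The background fact used repeatedly is that in $\operatorname{R}(G_2)$ the class $[I(\chi_1 \otimes \chi_2)]$ depends only on the Weyl orbit of the character, so in particular $[I(\nu \otimes 1)] = [I(1 \otimes \nu)] = [I(\nu \otimes \nu^{-1})] = [I(\nu^{-1} \otimes \nu)]$, and moreover $[I_\beta(-1/2, \delta(1))] = [I_\beta(1/2, \delta(1))]$.

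For $\pi(1)$, plugging $r_\alpha \pi(1) = \nu^{1/2} \circ \det$, $r_\beta \pi(1) = \nu^{1/2} \delta(1)$ and $r_\emptyset \pi(1) = 1 \otimes \nu$ into the formula yields
\begin{equation*}
D_{G_2}(\pi(1)) = [\pi(1)] - [I_\alpha(1/2, 1_{\GL_2})] - [I_\beta(1/2, \delta(1))] + [I(1 \otimes \nu)].
\end{equation*}
Replacing $[I(1 \otimes \nu)]$ by $[I_\alpha(1/2, 1_{\GL_2})] + [I_\alpha(1/2, \delta(1))]$ via \eqref{eq:IaIa=IbIb} cancels the $[I_\alpha(1/2, 1_{\GL_2})]$ term, and expanding the two remaining $I_\gamma(1/2, \delta(1))$ summands by \eqref{eq:IaIbAll3} leaves precisely $[J_\alpha(1/2, \delta(1))]$.

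For $J_\beta(1/2, \delta(1))$, Proposition \ref{prop:rarbAll3} gives $r_\alpha = I^\alpha(\nu \otimes \nu^{-1})$ and $r_\beta = \nu^{-1/2}\delta(1) + \nu^{1/2} \circ \det$, and by transitivity of the Jacquet functor $r_\emptyset = \nu^{-1} \otimes \nu + \nu \otimes \nu^{-1}$. Using transitivity of parabolic induction together with Weyl conjugation one gets $I_\alpha(I^\alpha(\nu \otimes \nu^{-1})) = I(1 \otimes \nu)$ and $[I(\nu^{-1} \otimes \nu)] + [I(\nu \otimes \nu^{-1})] = 2[I(1 \otimes \nu)]$; moreover $I_\beta(\nu^{-1/2}\delta(1) + \nu^{1/2} \circ \det) = [I_\beta(1/2, \delta(1))] + [I_\beta(1/2, 1_{\GL_2})]$. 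Invoking \eqref{eq:IaIa=IbIb} once more makes the four induced terms cancel in pairs, leaving just $[J_\beta(1/2, \delta(1))]$, so this representation is self-dual.

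For $\pi'(1)$, we read off $r_\alpha \pi'(1) = 2\nu^{1/2}\delta(1) + \nu^{1/2} \circ \det$, $r_\beta \pi'(1) = I^\beta(\nu \otimes 1) + \nu^{1/2}\delta(1)$ and $r_\emptyset \pi'(1) = 1 \otimes \nu + 2(\nu \otimes 1)$. Using $I_\beta(I^\beta(\nu \otimes 1)) = I(\nu \otimes 1) = I(1 \otimes \nu)$ and then \eqref{eq:IaIa=IbIb}, the expression collapses to $[\pi'(1)] + [I_\alpha(1/2, 1_{\GL_2})] - [I_\beta(1/2, \delta(1))]$, which after expansion via \eqref{eq:IaIbAll3} reduces to $[J_\beta(1, \pi(1,1))]$. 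The principal bookkeeping hurdle throughout is correctly identifying the Weyl-conjugate principal-series classes in $\operatorname{R}(G_2)$ and the sign arithmetic, so that all cancellations leave exactly one irreducible class of the correct sign in each case.
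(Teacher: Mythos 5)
Your proposal is correct and follows essentially the same route as the paper: plug the Jacquet-module data of Proposition \ref{prop:rarbAll3} (and Proposition \ref{prop:I1v}(1)) into the defining alternating sum, identify Weyl-conjugate principal-series classes in $\operatorname{R}(G_2)$, and cancel via \eqref{eq:IaIa=IbIb} before expanding with \eqref{eq:IaIbAll3}. The intermediate expressions you record (e.g.\ $[\pi'(1)]+[I_{\alpha}(1/2,1_{\GL_2})]-[I_{\beta}(1/2,\delta(1))]$) coincide with those in the paper's computation.
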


\begin{proof}
  We compute as follows
  \begin{equation}\label{eq:DGpi1}
  \begin{aligned}
    & D_{G_2} (\pi (1)) = I \circ r_T^{G_2} (\pi (1)) - I_{\alpha} \circ r_{\alpha} (\pi (1))  - I_{\beta} \circ r_{\beta} (\pi (1)) + \pi (1)\\
   &= I \circ (r^{L_{\alpha}}_T ( \nu^{1/2} \circ \det)) -I_{\alpha} ( \nu^{1/2} \circ \det) - I_{\beta } ( \nu^{1/2} \delta (1)) + \pi (1) \\
   & = I_{\alpha} (1/2, \delta (1))- I_{\beta} (1/2, \delta (1)) + \pi (1)\\
   & = J_{\alpha} (1/2, \delta (1)).
  \end{aligned}
\end{equation}
Similarly for the rest terms:
\begin{equation}\label{eq:DGJb12}
  \begin{aligned}
    & D_{G_2} (J_{\beta} (1/2, \delta (1))) = I \circ r_T^{G_2} (J_{\beta} (1/2, \delta (1))) - I_{\alpha} \circ r_{\alpha} (J_{\beta} (1/2, \delta (1)))  \\
   & - I_{\beta} \circ r_{\beta} (J_{\beta} (1/2, \delta (1))) + J_{\beta} (1/2, \delta (1)) \\
   &= I \circ (r^{L_{\alpha}}_T (I^{\alpha} (\nu \otimes \nu^{-1}))) -I_{\alpha} (I^{\alpha} (\nu \otimes \nu^{-1})) - I_{\beta } (\nu^{-1/2} \delta (1)) \\ &-I_{\beta}( \nu^{1/2} \circ \det )  + J_{\beta} (1/2, \delta (1)) \\
   & =J_{\beta} (1/2, \delta (1)) .
  \end{aligned}
\end{equation}
\begin{equation}\label{eq:DGpi'1}
  \begin{aligned}
    & D_{G_2} ( \pi ' (1) ) = I \circ r_T^{G_2} (\pi' (1)) - I_{\alpha} \circ r_{\alpha} (\pi' (1))
    - I_{\beta} \circ r_{\beta} (\pi '(1)) + \pi' (1)\\
    &= I \circ (r^{L_{\alpha}}_T (2\nu^{1/2} \delta (1) + \nu^{1/2} \circ \det)) -I_{\alpha} (2 \nu^{1/2} \delta (1)+ \nu^{1/2} \circ \det)  \\ &  - I_{\beta } (I^{\beta} (\nu \otimes 1) +  \nu^{1/2} \delta
(1)) + \pi' (1) \\
&= 2 I(\nu \otimes 1) -2 I_{\alpha} (1/2, \delta (1)) -I_{\alpha} (1/2, 1_{\GL_2}) -I_{\beta} (1/2, \delta (1)) + \pi' (1) \\
&=  I_{\alpha} (1/2, 1_{\GL_2}) -I_{\beta} (1/2, \delta (1)) + \pi'(1) \\
& = J_{\beta} (1, \pi (1,1)).
  \end{aligned}
\end{equation}

\end{proof}

In this case $\mathfrak{s} = [T, 1]$ and $\mathfrak{J}^{\mathfrak{s}} =
G_2(\C)$ and
$\H^{\mathfrak{s}} = \H ( G_2 (F), 1)$. By \cite[Section
9.3.1 table 15]{AubertXu2023explicit} and \cite[Table 6.1]{Ram2004Rank2}, we
have the following proposition.
\begin{prop}\label{prop:Duality3Hecke}
   Using
the indexing triples to denote the standard modules, we have:
  \begin{equation}\label{eq:Duality3Hecke}
    \begin{aligned}
      D_{\H^{\mathfrak{s}} }(M_{t_e, e_{\alpha^{\vee}} + e_{\alpha^{\vee}} + 2 \beta^{\vee}, (21)} )& =M_{t_e, e_{\alpha^{\vee}} + e_{\alpha^{\vee}} + 2 \beta^{\vee}, (21)}^{*}= M_{t_e, e_{\alpha^{\vee}}, 1},\\
      D_{\H^{\mathfrak{s}} } (M_{t_e, e_{\alpha^{\vee} + \beta^{\vee}}, 1}) &=M_{t_e, e_{\alpha^{\vee} + \beta^{\vee}}, 1}^{*}=M_{t_e, e_{\alpha^{\vee} + \beta^{\vee}}, 1} \\
      D_{\H^{\mathfrak{s}} } ( M_{t_e, e_{\alpha^{\vee}} + e_{\alpha^{\vee}} + 2 \beta^{\vee}, (3)} )&=M_{t_e, e_{\alpha^{\vee}} + e_{\alpha^{\vee}} + 2 \beta^{\vee}, (3)}^{*}= M_{t_e, 0, 1}.
    \end{aligned}
  \end{equation}
\end{prop}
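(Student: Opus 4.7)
The plan is to transport the three identities of Proposition \ref{prop:Duality3} across the equivalence of categories $\Rep^{\mathfrak{s}}(G_2) \simeq \H^{\mathfrak{s}}\smod$ supplied by Roche's type $(J,\rho)$ (Section \ref{subsec:Principalblocks}), and then interpret the resulting duality on the Hecke-algebra side via Kato's theorem. Since the case at hand has $\mathfrak{s} = [T,1]$ and $\mathfrak{J}^{\mathfrak{s}} = G_2(\C)$, the algebra $\H^{\mathfrak{s}}$ is $\H(G_2(F),1_J)$, an Iwahori-Hecke algebra of type $G_2$, so \cite[Table 15]{AubertXu2023explicit} together with \cite[Table 6.1]{Ram2004Rank2} gives an explicit dictionary between the irreducible objects of $\Rep^{\mathfrak{s}}(G_2)$ appearing in Proposition \ref{prop:I1v} and the standard modules $M_{s,n,\rho}$.

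First, I would record the required dictionary between irreducibles and standard modules in the case $\mathfrak{s}=[T,1]$: reading off \cite[Table 15]{AubertXu2023explicit} (cross-referenced with \cite[Table 6.1]{Ram2004Rank2}) one has
\[
\pi(1) \leftrightarrow M_{t_e,\,e_{\alpha^{\vee}}+e_{\alpha^{\vee}+2\beta^{\vee}},\,(21)},\quad J_{\alpha}(1/2,\delta(1)) \leftrightarrow M_{t_e,\,e_{\alpha^{\vee}},\,1},
\]
\[
J_{\beta}(1/2,\delta(1)) \leftrightarrow M_{t_e,\,e_{\alpha^{\vee}+\beta^{\vee}},\,1},\quad \pi'(1) \leftrightarrow M_{t_e,\,e_{\alpha^{\vee}}+e_{\alpha^{\vee}+2\beta^{\vee}},\,(3)},
\]
\[
J_{\beta}(1,\pi(1,1)) \leftrightarrow M_{t_e,\,0,\,1}.
\]

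Second, I would invoke the compatibility of the two dualities under the Roche equivalence. The key point is that the Aubert-Zelevinsky involution $\ND_G$ is built from the functors $\NI_{P_I}^G\circ\Nr_{P_I}^G$ indexed by $I\subset S$, and under the Roche equivalence these translate into the functors $\Ind_I\circ \Res_I$ on $\H^{\mathfrak{s}}\smod$ (this is the content of how subquotients of parabolic inductions match with the $\H_I$-induced modules in the Bernstein block). Taking alternating sums over $I\subset S$ on both sides yields $D_{\H^{\mathfrak{s}}}([V])$ on the module side when $\ND_G([\pi])$ is computed on the representation side, so the equivalence exchanges $\ND_G$ with $D_{\H^{\mathfrak{s}}}$.

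Third, combining the two preceding steps: for each of the three identities in \refeq{eq:Duality3}, replacing every irreducible $G_2$-representation by its corresponding standard module from the dictionary gives the first equality in each line of \refeq{eq:Duality3Hecke}; the second equality $D_{\H^{\mathfrak{s}}}[M] = [M^*]$ is precisely Theorem \ref{thm:KatoEn} applied module by module.

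The main obstacle is the compatibility claim in the second step: one must ensure that the Roche-type equivalence really does intertwine Aubert-Zelevinsky duality on $\Rep^{\mathfrak{s}}(G_2)$ with the involution $D$ on $\H^{\mathfrak{s}}\smod$ of Theorem \ref{thm:KatoEn}. This ultimately comes down to checking that Roche's cover is compatible with parabolic induction/restriction, which is standard in the type-theoretic setup; once this is granted, the remainder is merely the substitution just outlined.
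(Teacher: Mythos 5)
Your proposal is correct and takes essentially the same route as the paper: the paper obtains \refeq{eq:Duality3Hecke} precisely by transporting the three identities of Proposition \ref{prop:Duality3} through the dictionary between irreducibles in $\Rep^{\mathfrak{s}}(G_2)$ and indexing triples given by \cite[Table 15]{AubertXu2023explicit} and \cite[Table 6.1]{Ram2004Rank2}, with the equality $D_{\H^{\mathfrak{s}}}[M]=[M^{*}]$ supplied by Theorem \ref{thm:KatoEn}. The compatibility of $\ND_{G_2}$ with $D_{\H^{\mathfrak{s}}}$ under the Roche-type equivalence, which you rightly single out as the point needing justification, is exactly what the paper implicitly assumes.
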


In the next subsection, we continue to discuss the other possibility for $I^{\alpha}(\nu^{\mp 1 -s}
\otimes \xi_2 \nu^{-s})$ to be reducible.
\subsubsection{Case  \refeq{Muic1997itm:2} within the case $\mathfrak{s} = [T, 1]_G$ Part II}
 Now we deal with the case:  \(\nu^{\mp 1} \otimes \xi_2 \in \left\{ \nu \otimes \nu^2, \ \nu^{-1} \otimes \nu^{-2} \right\}\).
This reduces to the case $s=3/2$ and $\chi=1$ in Lemma \ref{lem:ra} and Lemma \ref{lem:rb}. If $s=3/2$ and $\chi=1$, then by \cite[Theorem 3.1]{Muic1997}, $I_{\alpha} (s,
\delta (\chi))$ and $I_{\alpha} (s, \chi \circ \det)$ reduce, while $I_{\beta} (s,
\delta (\chi))$ or $I_{\beta} (s, \chi \circ \det)$ do not.

If $s=3/2$ and $\chi=1$, then $I(\nu^2 \otimes \nu) = I(\nu^3 \otimes \nu^{-2})\
= I (\nu \otimes \nu^{-3}) =I (\nu^{-2} \otimes \nu^{-1}) = I(\nu^3 \otimes
\nu^{-1}) = I (\nu^2 \otimes \nu^{-3}) \   [\operatorname{R}(G_2)] $ (we omit the terms differ by $s_{\alpha}$). Therefore   $I_{\beta} (\nu^{5/2} \circ \det) +
  I_{\beta} (\nu^{5/2} \delta (1)) =I (\nu^2 \otimes \nu) =I_{\alpha} (\nu^{3/2} \circ \det) + I_{\alpha}
  (\nu^{3/2} \delta (1)) \ [\operatorname{R}(G_2)]$.

  \begin{cor}\label{cor:raAll4}
   We consider the case  when $s=3/2$ and $\chi=1$ in Lemma \ref{lem:ra}, list the equations below:
     \begin{equation}\label{eq:raIaSt4}
    \begin{aligned}
      r_{\alpha} (I_{\alpha}(3/2, \delta (1))) = \nu^{3/2} \delta (1) +\nu^{-3/2} \delta (1) + I^{\alpha} (\nu^{3}  \otimes \nu^{-1} )+I^{\alpha} (\nu^{2 } \otimes \nu^{-3} ),
      \end{aligned}
\end{equation}
\begin{equation}\label{eq:raIaTriv4}
  \begin{aligned}
    r_{\alpha} (I_{\alpha}(3/2, 1_{\GL_2})) = \nu^{3/2} \circ \det +\nu^{-3/2} \circ \det + I^{\alpha} (\nu \otimes \nu^{-3} )+I^{\alpha} (\nu^{3}  \otimes \nu^{-2} ),
    \end{aligned}
\end{equation}
\begin{equation}\label{eq:raIbSt4}
  \begin{aligned}
    r_{\alpha} (I_{\beta}(5/2, \delta (1))) = \nu^{3/2} \delta (1) + \nu^{3/2} \circ \det + I^{\alpha} (\nu \otimes \nu^{-3} ) + I^{\alpha} (\nu^{3}  \otimes \nu^{-2} ),
    \end{aligned}
\end{equation}
\begin{equation}\label{eq:raIbTriv4}
  \begin{aligned}
    r_{\alpha} (I_{\beta}(5/2, 1_{\GL_2})) =\nu^{-3/2} \delta (1) + \nu^{-3/2} \circ \det + I^{\alpha} (\nu^{3}  \otimes \nu^{-1})  + I^{\alpha} (\nu^{2}  \otimes \nu^{-3} ).
    \end{aligned}
  \end{equation}
  \end{cor}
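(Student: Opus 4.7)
The plan is to deduce all four identities by direct substitution into the general formulas of Lemma \ref{lem:ra}, followed by reducing the resulting $I^{\alpha}(\chi_1 \otimes \chi_2)$ terms via Proposition \ref{prop:Mgamma}. The asymmetry between the parameters $s=3/2$ for $I_{\alpha}$ and $s=5/2$ for $I_{\beta}$ is forced by the identity
\[
I_{\beta}(\nu^{5/2} \circ \det) + I_{\beta}(\nu^{5/2} \delta(1)) = I(\nu^2 \otimes \nu) = I_{\alpha}(\nu^{3/2} \circ \det) + I_{\alpha}(\nu^{3/2} \delta(1)) \quad [\operatorname{R}(G_2)]
\]
recorded just before the corollary: the four induced representations in \refeq{eq:raIaSt4}--\refeq{eq:raIbTriv4} are precisely the standard composition factors of the principal series $I(\nu^2 \otimes \nu)$ under consideration.

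For \refeq{eq:raIaSt4} and \refeq{eq:raIaTriv4}, I would substitute $s=3/2$ and $\chi=1$ into the first and second lines of Lemma \ref{lem:ra}. The Steinberg and trivial summands $\nu^{\pm 3/2}\delta(1)$ and $\nu^{\pm 3/2} \circ \det$ appear immediately. The two remaining $I^{\alpha}$ terms in each formula involve character pairs $(\chi_1, \chi_2)$ whose ratio $\chi_1/\chi_2$ equals $\nu^{\pm 4}$ or $\nu^{\pm 5}$. Part (1) of Proposition \ref{prop:Mgamma} then guarantees that these induced representations are irreducible, so no further collapsing occurs and the two identities follow verbatim.

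For \refeq{eq:raIbSt4} and \refeq{eq:raIbTriv4}, I would instead substitute $s=5/2$ and $\chi=1$ into the third and fourth lines of Lemma \ref{lem:ra}. Each such formula produces three $I^{\alpha}$ summands; exactly one of them has character ratio equal to $\nu$, namely $I^{\alpha}(\nu^2 \otimes \nu)$ in the Steinberg case and $I^{\alpha}(\nu^{-1}\otimes \nu^{-2})$ in the trivial case. Part (2) of Proposition \ref{prop:Mgamma} identifies these reducible pieces in the Grothendieck group of $L_{\alpha}$ as $\nu^{3/2}\delta(1) + \nu^{3/2}\circ \det$ and $\nu^{-3/2}\delta(1) + \nu^{-3/2}\circ \det$ respectively (the sub/quotient distinction is immaterial in $\operatorname{R}(L_\alpha)$). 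Replacing these two summands by their Grothendieck expansions yields precisely \refeq{eq:raIbSt4} and \refeq{eq:raIbTriv4}.

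There is no real obstacle: the work is almost entirely parameter bookkeeping. The one point requiring attention is simply to evaluate each line of Lemma \ref{lem:ra} at the parameter proper to it (namely $s=3/2$ on the $I_{\alpha}$ lines and $s=5/2$ on the $I_{\beta}$ lines), so that the four resulting formulas consistently describe composition factors of the single principal series $I(\nu^2 \otimes \nu)$.
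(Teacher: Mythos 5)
Your proposal is correct and is exactly the paper's (implicit) derivation: the corollary is obtained by substituting $s=3/2$ into the $I_{\alpha}$ lines and $s=5/2$ into the $I_{\beta}$ lines of Lemma \ref{lem:ra} (consistently with the decomposition of $I(\nu^{2}\otimes\nu)$ recorded just before the statement), and then expanding the single reducible terms $I^{\alpha}(\nu^{2}\otimes\nu)=\nu^{3/2}\delta(1)+\nu^{3/2}\circ\det$ and $I^{\alpha}(\nu^{-1}\otimes\nu^{-2})=\nu^{-3/2}\delta(1)+\nu^{-3/2}\circ\det$ in $\operatorname{R}(L_{\alpha})$ via Proposition \ref{prop:Mgamma}. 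All parameter checks (including the irreducibility of the remaining $I^{\alpha}$ terms, with ratios $\nu^{\pm 4}$, $\nu^{\pm 5}$) are accurate.
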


  \begin{cor}\label{cor:rbAll4}
     We consider the case  when $s=3/2$ and $\chi=1$ in Lemma \ref{lem:rb}, list the equations below:
\begin{equation}\label{eq:rbIbSt}
   \begin{aligned}
     r_{\beta} (I_{\beta}(5/2, \delta (1))) = \nu^{5/2} \delta (1) + \nu^{-5/2} \delta (1) + I^{\beta} (\nu \otimes \nu^{2} ) + I^{\beta} (\nu^{-2}  \otimes \nu^{3} ),
     \end{aligned}
\end{equation}
\begin{equation}\label{eq:rbIbTriv}
  \begin{aligned}
    r_{\beta} (I_{\beta}(5/2, 1_{\GL_2})) = \nu^{5/2}  \circ \det + \nu^{-5/2}   \circ \det + I^{\beta} (\nu^{-1} \otimes \nu^{3} ) + I^{\beta} (\nu^{-3} \otimes \nu^2 ),
    \end{aligned}
\end{equation}
\begin{equation}\label{eq:rbIaSt}
  \begin{aligned}
    r_{\beta} (I_{\alpha}(3/2, \delta (1))) = \nu^{5/2} \delta (1) + \nu^{5/2} \circ \det + I^{\beta} (\nu^{-3}  \otimes \nu^{2} ) + I^{\beta} (\nu^{-1}  \otimes \nu^{3} ),
    \end{aligned}
\end{equation}
\begin{equation}\label{eq:rbIaTriv}
  \begin{aligned}
    r_{\beta} \left( I_{\alpha}(3/2, 1_{\GL_2}) \right) =  \nu^{-5/2} \delta (1) + \nu^{-5/2} \circ \det  +I^{\beta} (\nu  \otimes \nu^{2})  + I^{\beta} (\nu^{-2}  \otimes \nu^3 ).
    \end{aligned}
\end{equation}
  \end{cor}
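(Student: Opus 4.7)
The plan is to derive the four equations in Corollary~\ref{cor:rbAll4} as direct specializations of Lemma~\ref{lem:rb}. For \refeq{eq:rbIbSt} and \refeq{eq:rbIbTriv} I would set $(s,\chi)=(5/2,1)$ in the first two identities of that lemma; for \refeq{eq:rbIaSt} and \refeq{eq:rbIaTriv} I would set $(s,\chi)=(3/2,1)$ in the last two identities. Each substitution immediately produces one or two twists of the generalized Steinberg/trivial representations on $L_\beta$ together with two or three terms of the form $I^\beta(\chi_1 \otimes \chi_2)$.

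Next I would determine which of those $I^\beta(\chi_1 \otimes \chi_2)$ still need to be expanded. By Proposition~\ref{prop:Mgamma}(1) applied with $\gamma=\beta$, the principal series $I^\beta(\chi_1 \otimes \chi_2)$ is reducible exactly when $(\chi_1 \otimes \chi_2)\circ \beta^\vee = \nu^{\pm 1}$; using the $T_\beta$-parametrization \refeq{eq:Characterab} this criterion translates to the simple condition $\chi_2 = \nu^{\pm 1}$. Running through the four substitutions, only $I^\beta(\nu^{2}\otimes \nu)$ (which appears in $r_\beta I_\alpha(3/2, \delta(1))$) and $I^\beta(\nu^{-3}\otimes \nu)$ (which appears in $r_\beta I_\alpha(3/2, 1_{\GL_2})$) meet the condition; every other $I^\beta$ term arising in the substitution is irreducible and may be left as written.

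For these two reducible terms I would invoke Proposition~\ref{prop:Mgamma}(3) together with the restriction formula $r^{L_\beta}_T(\nu^s \delta(\chi)) = \nu^{s-1/2}\chi \otimes \nu$ from Section~\ref{subsubsec:GL2Rep} to read off the correct exponent. Solving $\nu^{s-1/2}=\nu^{2}$ gives $s=5/2$, so $I^\beta(\nu^{2}\otimes\nu) = \nu^{5/2}\delta(1) + \nu^{5/2}\circ\det$; solving $\nu^{s-1/2}=\nu^{-3}$ gives $s=-5/2$, whence $I^\beta(\nu^{-3}\otimes\nu) = \nu^{-5/2}\delta(1) + \nu^{-5/2}\circ\det$. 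Substituting these back into the outputs of Lemma~\ref{lem:rb} yields exactly the four displayed equations of the corollary.

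The whole argument is essentially mechanical; the only place requiring genuine care is bookkeeping of the $T_\beta$-versus-$T$ identifications \refeq{eq:Torusab}--\refeq{eq:Characterab} combined with the asymmetric twist rule $\nu^s \otimes I^\beta(\chi_1 \otimes \chi_2) = I^\beta(\chi_1 \nu^s \otimes \chi_2)$, which simultaneously governs the reducibility test $\chi_2 = \nu^{\pm 1}$ and the readoff of the parameter $s$ when decomposing a reducible $I^\beta$ as a twist of $\delta(1)$ plus a twist of $1\circ\det$. Once these conventions are held fixed, no further obstacle remains.
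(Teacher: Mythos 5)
Your proposal is correct and follows essentially the same route the paper intends: the corollary is a mechanical specialization of Lemma \ref{lem:rb} (with $s=5/2$ in the two $I_\beta$ identities and $s=3/2$ in the two $I_\alpha$ identities, as you rightly note despite the corollary's loose phrasing), followed by decomposing the only two reducible terms $I^\beta(\nu^{2}\otimes\nu)$ and $I^\beta(\nu^{-3}\otimes\nu)$ into $\nu^{\pm 5/2}\delta(1)+\nu^{\pm 5/2}\circ\det$. Your reducibility test $\chi_2=\nu^{\pm 1}$ and the twist bookkeeping are consistent with the paper's $T_\beta$ conventions, so no gap remains.
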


  \begin{prop}\label{Muicprop4.4}
    \begin{enumerate}[(1)]
    \item  The induced representation $I(\nu^2 \otimes \nu)= \Ind_B^{G_2}
    (\delta_{G_2}^{1/2})$, it contains a unique irreducible subrepresentation
    $\St_{G_2}$ (the \emph{Steinberg representation} of $G_2$) and a unique irreducible
    quotient $1_{G_2}$ (the \emph{trivial representation}). Moreover, we have $r^{G_2}_T (\St) = \nu^2
    \otimes \nu$.
  \item We have the following equations in $R[G_2]$:
    \begin{equation}\label{eq:Muicprop4.4}
      \begin{aligned}
        I_{\alpha} (3/2, \delta (1))& = \St_{G_2} +J_{\alpha} (3/2, \delta (1))\\
        I_{\beta} (5/2, \delta (1)) & = \St_{G_2} +J_{\beta} (5/2, \delta (1)) \\
        I_{\alpha} (3/2, 1_{\GL_2}) & = 1_{G_2} +J_{\beta}(5/2, \delta (1)) \\
        I_{\beta} (5/2, 1_{\GL_2}) &= 1_{G_2} +J_{\alpha} (3/2, \delta (1))
      \end{aligned}
    \end{equation}
    \end{enumerate}

  \end{prop}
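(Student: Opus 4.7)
The plan is to mimic closely the structure of the proof of Proposition \ref{prop:I1v}, but in the present situation the analysis is noticeably simpler: there are essentially \emph{two} composition factors per standard module rather than three, because $\nu^2\otimes\nu=\delta_B^{1/2}$ sits on the boundary of the dominant chamber where the Steinberg and trivial representations are readily available as fixed anchors. Throughout I will use the Jacquet module decompositions recorded in Corollaries \ref{cor:raAll4} and \ref{cor:rbAll4}, the Langlands quotient theorem, and Frobenius reciprocity together with the second adjoint theorem.

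For part (1): by construction $I(\nu^2\otimes\nu)=\NI_B^{G_2}(\delta_B^{1/2})$, so Casselman's theory (\emph{cf.} the classical treatment of Steinberg and trivial representations of a $p$-adic reductive group) immediately gives that $\St_{G_2}$ is the unique irreducible subrepresentation and $1_{G_2}$ the unique irreducible quotient. For the Jacquet module, $1_{G_2}$ is the Langlands quotient and $\St_{G_2}$ is the unique essentially square-integrable constituent whose central exponent is $\delta_B^{1/2}$; Casselman's criterion then forces $r^{G_2}_T(\St_{G_2})=\nu^2\otimes\nu$. Rewriting $\nu^2\otimes\nu$ through the isomorphisms \refeq{eq:Torusab}--\refeq{eq:Characterab}, one sees $r_\alpha(\St_{G_2})=\nu^{3/2}\delta(1)$ and $r_\beta(\St_{G_2})=\nu^{5/2}\delta(1)$, and dually $r_\alpha(1_{G_2})=\nu^{-3/2}\circ\det$, $r_\beta(1_{G_2})=\nu^{-5/2}\circ\det$.

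For part (2): I start from the two short exact sequences
\[
0\to I_\alpha(3/2,\delta(1))\to I(\nu^2\otimes\nu)\to I_\alpha(3/2,1_{\GL_2})\to 0,
\]
\[
0\to I_\beta(5/2,\delta(1))\to I(\nu^2\otimes\nu)\to I_\beta(5/2,1_{\GL_2})\to 0,
\]
coming from Proposition \ref{prop:Mgamma}. Uniqueness of $\St_{G_2}$ (respectively $1_{G_2}$) inside (respectively as quotient of) $I(\nu^2\otimes\nu)$ forces $\St_{G_2}$ to be the unique irreducible subrepresentation of each of $I_\alpha(3/2,\delta(1))$ and $I_\beta(5/2,\delta(1))$, and $1_{G_2}$ to be the unique irreducible quotient of each of $I_\alpha(3/2,1_{\GL_2})$ and $I_\beta(5/2,1_{\GL_2})$. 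The Langlands quotient theorem provides the remaining named constituents: $J_\alpha(3/2,\delta(1))$ is the unique irreducible quotient of $I_\alpha(3/2,\delta(1))$ and $J_\beta(5/2,\delta(1))$ the unique irreducible quotient of $I_\beta(5/2,\delta(1))$. Dually, by passing to the Langlands subrepresentation picture and exploiting
\[
I_\alpha(3/2,1_{\GL_2})=I_\alpha(-3/2,\delta(1))\bigl[\Res(G_2)\bigr],\qquad
I_\beta(5/2,1_{\GL_2})=I_\beta(-5/2,\delta(1))\bigl[\Res(G_2)\bigr],
\]
the same Langlands apparatus shows $J_\alpha(3/2,\delta(1))$ embeds into $I_\beta(5/2,1_{\GL_2})$ and $J_\beta(5/2,\delta(1))$ embeds into $I_\alpha(3/2,1_{\GL_2})$.

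The final step is to show each of the four standard modules has length exactly two, and therefore equals the sum of the two constituents already exhibited. For this I compare the Jacquet module expressions in \refeq{eq:raIaSt4}--\refeq{eq:raIbTriv4} and \refeq{eq:rbIbSt}--\refeq{eq:rbIaTriv} with the computed values $r_\alpha(\St_{G_2})=\nu^{3/2}\delta(1)$, $r_\beta(\St_{G_2})=\nu^{5/2}\delta(1)$, $r_\alpha(1_{G_2})=\nu^{-3/2}\circ\det$, $r_\beta(1_{G_2})=\nu^{-5/2}\circ\det$. The key identity in the Grothendieck group,
\[
I_\alpha(3/2,\delta(1))+I_\alpha(3/2,1_{\GL_2})=I(\nu^2\otimes\nu)=I_\beta(5/2,\delta(1))+I_\beta(5/2,1_{\GL_2}),
\]
together with the uniqueness of $\St_{G_2}$ and $1_{G_2}$ as sub/quotient, implies that any further composition factor would have to be a common constituent of two of these standard modules. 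Using Frobenius reciprocity and the second adjoint theorem, such a constituent would have to contribute, via its Jacquet module, a summand appearing simultaneously in distinct expressions of \refeq{eq:raIaSt4}--\refeq{eq:rbIaTriv}; a direct comparison of the listed terms rules this out. The four identities of \refeq{eq:Muicprop4.4} then follow.

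The main obstacle is the length-two verification in the last paragraph: one must check, case by case, that the Jacquet module of the would-be Langlands quotient (e.g.\ $J_\alpha(3/2,\delta(1))$) accounts for \emph{exactly} the residual summands in \refeq{eq:raIaSt4} and \refeq{eq:rbIaSt} after subtracting the contribution of $\St_{G_2}$, and similarly for the other three cases. This is the analogue of the delicate intersection-in-the-Grothendieck-group argument carried out around \refeq{eq:Muic4.21}--\refeq{eq:IaIa=IbIb2} in the proof of Proposition \ref{prop:I1v}, but it is easier here because the irreducibility of $I^\alpha(\nu^3\otimes\nu^{-1})$, $I^\alpha(\nu^2\otimes\nu^{-3})$, $I^\beta(\nu^{-1}\otimes\nu^3)$ and $I^\beta(\nu^{-3}\otimes\nu^2)$ (all of which are checked via Proposition \ref{prop:Mgamma}(1)) removes the regularity subtleties encountered at $s=1/2$.
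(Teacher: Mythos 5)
There is a genuine gap, concentrated in exactly the two places where your outline differs from what is actually needed. First, the identities you invoke, $I_\alpha(3/2,1_{\GL_2})=I_\alpha(-3/2,\delta(1))$ and $I_\beta(5/2,1_{\GL_2})=I_\beta(-5/2,\delta(1))$ in $\operatorname{R}(G_2)$, are false. The Weyl group element that normalizes $L_\gamma$ carries $\nu^{-s}\delta(1)$ to $\nu^{s}\delta(1)$, not to $\nu^{s}\circ\det$, so the correct statement (the one the paper uses elsewhere, e.g.\ around \refeq{eq:JaIa-1inclu}, via \cite[Lemma 5.4 (iii)]{BernsteinDeligneKazhdan1986}) is $I_\gamma(-s,\delta(1))=I_\gamma(s,\delta(1))$ in $\operatorname{R}(G_2)$; one can also see this by comparing \refeq{eq:raIaSt4} with Lemma \ref{lem:ra} at $s=-3/2$, whose terms are $\nu^{\pm 3/2}\delta(1)$ rather than $\nu^{\pm 3/2}\circ\det$. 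Since your placement of $J_\alpha(3/2,\delta(1))$ inside $I_\beta(5/2,1_{\GL_2})$ and of $J_\beta(5/2,\delta(1))$ inside $I_\alpha(3/2,1_{\GL_2})$ rests entirely on these identities, the distribution of constituents in the last two lines of \refeq{eq:Muicprop4.4} is not justified as written. (It can be repaired: $J_\alpha(3/2,\delta(1))\hookrightarrow I_\alpha(-3/2,\delta(1))$ forces $\nu^{-3/2}\delta(1)$ to occur in $r_\alpha(J_\alpha(3/2,\delta(1)))$, and this character occurs in \refeq{eq:raIbTriv4} but not in \refeq{eq:raIbSt4}, which is how one pins $J_\alpha$ into $I_\beta(5/2,1_{\GL_2})$.)

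Second, the counting step is missing. The paper's proof runs on one decisive input you never supply: by Rodier's theorem \cite{Rodier1981}, $I(\nu^2\otimes\nu)$ has length exactly $4$ (with multiplicity one, the character being regular), after which identifying $\St_{G_2}$, $1_{G_2}$, $J_\alpha(3/2,\delta(1))$, $J_\beta(5/2,\delta(1))$ and their locations finishes part (2), and part (1) of the Jacquet module claim follows by intersecting \refeq{eq:raIaSt4} with \refeq{eq:raIbSt4}. Your substitute --- ruling out extra constituents because they ``would have to contribute a summand appearing simultaneously in distinct expressions'' of \refeq{eq:raIaSt4}--\refeq{eq:rbIaTriv}, which ``a direct comparison rules out'' --- does not work as stated: common summands genuinely occur (e.g.\ $\nu^{3/2}\delta(1)$ in \refeq{eq:raIaSt4} and \refeq{eq:raIbSt4}, $I^\alpha(\nu\otimes\nu^{-3})$ in \refeq{eq:raIaTriv4} and \refeq{eq:raIbSt4}), precisely because the four true constituents are themselves shared between $\alpha$- and $\beta$-standard modules. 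To close the argument along your lines you would need the full Jacquet modules of $J_\alpha(3/2,\delta(1))$ and $J_\beta(5/2,\delta(1))$, but in the paper these are only determined (Proposition \ref{prop:rarbAll4}) as a consequence of the length-two decomposition, so your final paragraph is circular as a plan; the acknowledged ``main obstacle'' is exactly the unproven part. Either cite Rodier's length count as the paper does, or supply an independent computation of those Jacquet modules.
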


  \begin{proof}
    By a theorem of F. Rodier \cite{Rodier1981}, we know $I(\nu^2   \otimes \nu)$ has length $4$ and
    contains a unique subrepresentation, more precisely $I(\nu^2 \otimes \nu)=
    \Ind_B^{G_2}   (\delta_{G_2}^{1/2})$, the unique subrepresentation is
    $\St_{G_2}$ and it has a unique irreducible quotient $1_{G_2}$.

    By Langlands quotient theorem applied to  $I_{\alpha} (3/2, \delta (\chi))$
  and $I_{\beta} (5/2, \delta (\chi))$, we find the rest two (it has length $4$)
  composition factors: $J_{\alpha} (3/2, \delta (1))$ and $J_{\beta} (5/2,
  \delta (1))$.

Since $I_{\alpha} (3/2, \delta (1))$ and $I_{\beta} (5/2, \delta (1))$ are
subrepresentations of $I(\nu^2 \otimes \nu)$, they must contain $\St_{G_2}$. And $I_{\alpha} (3/2, 1_{\GL_2})$ and $I_{\beta} (5/2, 1_{\GL_2})$ are
quotients of $I(\nu^2 \otimes \nu)$, thus they both contain $1_{G_2}$. Part
$(2)$ of the proposition is proved.

 We know that $r_{\alpha} (\St_{G_2})$ must be contained in the common parts of
  \refeq{eq:raIaSt4} and \refeq{eq:raIbSt4}. Since $0 \neq \Hom_{G_2} (\St_{G_2},I_{\alpha} (\nu^{3/2} \delta (1)) )=
  \Hom_{L_{\alpha}}(r_{\alpha} (\St_{G_2}), \nu^{3/2} \delta (1) )$,
  $r_{\alpha} (\St_{G_2})$ must contain $\nu^{3/2} \delta (1)$. We easily have
  $r_{\alpha} (\St_{G_2}) = \nu^{3/2} \delta (1)$. Composing with
  $r^{L_{\alpha}}_{T}$, we prove part $(1)$ of the proposition.
  \end{proof}

  \begin{prop}\label{prop:rarbAll4}
    We list the images under $r_{\alpha}$ and $r_{\beta}$ of each composition
factors:
  \begin{equation}\label{eq:raraAll4}
  \begin{aligned}
    r_{\alpha} (\St_{G_2})&= \nu^{3/2} \delta(1), \quad r_{\beta} (\St_{G_2}) = \nu^{5/2} \delta (1) ,\\
     r_{\alpha} (J_{\alpha} (3/2, \delta (1)))
     &=\nu^{-3/2} \delta (1) + I^{\alpha} (\nu^{3}  \otimes \nu^{-1} )+I^{\alpha} (\nu^{2 } \otimes \nu^{-3} ) , \\
     r_{\beta} (J_{\alpha}(3/2, \delta (1)))& =  \nu^{5/2} \circ \det + I^{\beta} (\nu^{-3}  \otimes \nu^{2} ) + I^{\beta} (\nu^{-1}  \otimes \nu^{3} ) , \\
    r_{\alpha}  (1_{G_2})&= \nu^{-3/2} \circ  \det, \quad   r_{\beta}  (1_{G_2})= \nu^{-5/2} \circ  \det, \\
 r_{\alpha} (J_{\beta} (5/2, \delta (1)))
     &=\nu^{3/2} \circ \det + I^{\alpha} (\nu \otimes \nu^{-3} ) + I^{\alpha} (\nu^{3}  \otimes \nu^{-2} ) , \\
     r_{\beta} (J_{\beta} (5/2, \delta (1)))& =  \nu^{-5/2} \delta (1) + I^{\beta} (\nu \otimes \nu^{2} ) + I^{\beta} (\nu^{-2}  \otimes \nu^{3} ).
  \end{aligned}
\end{equation}
\end{prop}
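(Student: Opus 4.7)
The plan is to leverage the exactness of $r_\alpha$ and $r_\beta$ on $\operatorname{R}(G_2)$, combined with the composition series in Proposition \ref{Muicprop4.4}(2) and the Jacquet module computations of Corollaries \ref{cor:raAll4} and \ref{cor:rbAll4}, so that every unknown $r_\gamma(\pi)$ is obtained as a subtraction of known quantities. Once the Jacquet modules of $\St_{G_2}$ and $1_{G_2}$ are pinned down, those of $J_\alpha(3/2,\delta(1))$ and $J_\beta(5/2,\delta(1))$ follow automatically.

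First I would complete the computation for $\St_{G_2}$ begun at the end of the proof of Proposition \ref{Muicprop4.4}. That proof gives $r_\alpha(\St_{G_2}) = \nu^{3/2}\delta(1)$; by the symmetric Frobenius reciprocity argument applied to the embedding $\St_{G_2}\hookrightarrow I_\beta(5/2,\delta(1))$, the module $r_\beta(\St_{G_2})$ must contain $\nu^{5/2}\delta(1)$. Because $\St_{G_2}$ also embeds into $I_\alpha(3/2,\delta(1))$, it sits inside both \refeq{eq:rbIaSt} and \refeq{eq:rbIbSt}; the only common irreducible composition factor compatible with the Frobenius lower bound is $\nu^{5/2}\delta(1)$, yielding the claimed value. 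Alternatively one can compose with $r^{L_\beta}_T$ and use the identity $r^{G_2}_T(\St_{G_2})=\nu^2\otimes\nu$ from Proposition \ref{Muicprop4.4}(1) to rule out other candidates.

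For $1_{G_2}$ I would use the dual argument: $1_{G_2}$ is the unique irreducible quotient of both $I_\alpha(3/2, 1_{\GL_2})$ and $I_\beta(5/2, 1_{\GL_2})$, so by the second adjoint theorem $r_\alpha(1_{G_2})$ admits $\nu^{-3/2}\circ\det$ as a composition factor and $r_\beta(1_{G_2})$ admits $\nu^{-5/2}\circ\det$ as a composition factor. To conclude equality, I would use Casselman's formula to observe $r^{G_2}_T(1_{G_2}) = \nu^{-2}\otimes\nu^{-1}$ and apply transitivity $r^{G_2}_T = r^{L_\gamma}_T \circ r_\gamma$ to rule out additional factors appearing in \refeq{eq:raIaTriv4}, \refeq{eq:raIbTriv4}, \refeq{eq:rbIaTriv} and \refeq{eq:rbIbTriv}.

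With $r_\gamma(\St_{G_2})$ and $r_\gamma(1_{G_2})$ determined, the remaining four Jacquet modules are read off by subtraction using Proposition \ref{Muicprop4.4}(2): for instance
\[
r_\gamma(J_\alpha(3/2,\delta(1))) = r_\gamma(I_\alpha(3/2,\delta(1))) - r_\gamma(\St_{G_2}),
\]
and analogously for $J_\beta(5/2,\delta(1))$, with Corollaries \ref{cor:raAll4} and \ref{cor:rbAll4} providing the explicit right-hand sides. The main obstacle is a bookkeeping one: the Casselman--Frobenius comparisons involve several distinct modular-character normalizations (short versus long root, $L_\alpha$ versus $L_\beta$), and the isomorphism conventions of \refeq{eq:Torusab}--\refeq{eq:Characterab} must be tracked carefully to identify each standard subquotient correctly. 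No deep new ingredient is required beyond this vigilance.
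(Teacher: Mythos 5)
Your proposal is correct and follows essentially the paper's (implicit) argument: pin down $r_\gamma(\St_{G_2})$ and $r_\gamma(1_{G_2})$ (the former already established for $r_\alpha$ in the proof of Proposition \ref{Muicprop4.4}, and the $r_\beta$ value forced by Frobenius reciprocity plus the common-factor or $r^{G_2}_T(\St_{G_2})=\nu^2\otimes\nu$ argument), then read off the Jacquet modules of the two Langlands quotients by subtraction in the Grothendieck group using Proposition \ref{Muicprop4.4}(2) together with Corollaries \ref{cor:raAll4} and \ref{cor:rbAll4}. One minor caveat: second adjointness applied to the quotient map $I_\alpha(3/2,1_{\GL_2})\twoheadrightarrow 1_{G_2}$ controls $r_{\overline{U_\alpha}}(1_{G_2})$ rather than $r_\alpha(1_{G_2})$, but your Casselman-formula route (or the embedding $1_{G_2}\hookrightarrow I_\alpha(-3/2,1_{\GL_2})$ with ordinary Frobenius reciprocity) yields $r_\alpha(1_{G_2})=\nu^{-3/2}\circ\det$ and $r_\beta(1_{G_2})=\nu^{-5/2}\circ\det$ as claimed.
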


\begin{prop}\label{prop:Duality4}
  We compute the Aubert-Zelevinsky duality of all the irreducible
  representations listed above:
  \begin{equation}\label{eq:Duality4}
    \begin{aligned}
      D_{G_2} (\St_{G_2}) = 1_{G_2}, \quad D_{G_2} (J_{\alpha} (3/2, \delta (1))) =J_{\beta} (5/2, \delta (1))
    \end{aligned}
  \end{equation}
\end{prop}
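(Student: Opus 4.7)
The plan is to mimic the computation done for Proposition \ref{prop:Duality3}: plug the Jacquet module data from Proposition \ref{prop:rarbAll4} into the defining alternating sum
\[
D_{G_2}(\pi) = \pi \;-\; I_{\alpha}\!\circ r_{\alpha}(\pi) \;-\; I_{\beta}\!\circ r_{\beta}(\pi) \;+\; I\!\circ r_{\emptyset}(\pi),
\]
and then simplify using the composition-series identities \refeq{eq:Muicprop4.4} of Proposition \ref{Muicprop4.4}, together with the fact that $I(\nu^2\otimes \nu)$ has exactly the four composition factors $\St_{G_2}$, $1_{G_2}$, $J_{\alpha}(3/2,\delta(1))$, $J_{\beta}(5/2,\delta(1))$ in the Grothendieck group.

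For the first equality, I first note that $r_{\alpha}(\St_{G_2})=\nu^{3/2}\delta(1)$ and $r_{\beta}(\St_{G_2})=\nu^{5/2}\delta(1)$ by Proposition \ref{prop:rarbAll4}, so $I_{\alpha}\circ r_{\alpha}(\St_{G_2})=I_{\alpha}(3/2,\delta(1))$ and $I_{\beta}\circ r_{\beta}(\St_{G_2})=I_{\beta}(5/2,\delta(1))$; moreover $I\circ r_{\emptyset}(\St_{G_2})=I(\nu^2\otimes\nu)$ by the formula for $r^{G_2}_T(\St_{G_2})$ in Proposition \ref{Muicprop4.4}(1). Substituting \refeq{eq:Muicprop4.4} for $I_{\alpha}(3/2,\delta(1))$ and $I_{\beta}(5/2,\delta(1))$ and the four-term decomposition of $I(\nu^2\otimes\nu)$, all occurrences of $\St_{G_2}$, $J_{\alpha}(3/2,\delta(1))$ and $J_{\beta}(5/2,\delta(1))$ cancel and $1_{G_2}$ remains, giving $D_{G_2}(\St_{G_2})=1_{G_2}$.

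For the second equality, I substitute the expressions
\begin{align*}
r_{\alpha}(J_{\alpha}(3/2,\delta(1))) &= \nu^{-3/2}\delta(1)+I^{\alpha}(\nu^{3}\otimes\nu^{-1})+I^{\alpha}(\nu^{2}\otimes\nu^{-3}), \\
r_{\beta}(J_{\alpha}(3/2,\delta(1))) &= \nu^{5/2}\circ\det +I^{\beta}(\nu^{-3}\otimes\nu^{2})+I^{\beta}(\nu^{-1}\otimes\nu^{3})
\end{align*}
from Proposition \ref{prop:rarbAll4}. Applying $I_{\alpha}$ and $I_{\beta}$ turns the tempered summands into the four standard representations $I_{\alpha}(3/2,\delta(1))$, $I_{\beta}(5/2,\delta(1))$, $I_{\alpha}(3/2,1_{\GL_2})$, $I_{\beta}(5/2,1_{\GL_2})$ (perhaps up to the identification $I^{\beta}(\chi_1\otimes\chi_2)=I^{\beta}(s_{\beta}(\chi_1\otimes\chi_2))$ already used in Corollary \ref{cor:rb3}), while $I\circ r_{\emptyset}$ contributes a multiple of $I(\nu^{2}\otimes\nu)$ computed from $r_T^{L_{\alpha}}\circ r_{\alpha}$. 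Using \refeq{eq:Muicprop4.4} once more and the relation $I_{\alpha}(3/2,\delta(1))+I_{\alpha}(3/2,1_{\GL_2})=I(\nu^{2}\otimes\nu)=I_{\beta}(5/2,\delta(1))+I_{\beta}(5/2,1_{\GL_2})$ in $\operatorname{R}(G_2)$, all composition factors cancel except $J_{\beta}(5/2,\delta(1))$.

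The main obstacle is purely bookkeeping: the expression $I\circ r_{\emptyset}(J_{\alpha}(3/2,\delta(1)))$ expands as a large sum of principal series, and one has to regroup the terms into the four length-two induced modules in \refeq{eq:Muicprop4.4} before cancellation becomes visible. Matching the six terms in each of \refeq{eq:raIaSt4}--\refeq{eq:raIbTriv4} (or \refeq{eq:rbIbSt}--\refeq{eq:rbIaTriv}) against \refeq{eq:raraAll4} is exactly the place where Weyl-twist identities for $I^{\beta}$ must be applied consistently; once this identification is made, the cancellation is forced and the final identity $D_{G_2}(J_{\alpha}(3/2,\delta(1)))=J_{\beta}(5/2,\delta(1))$ drops out.
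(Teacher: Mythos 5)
Your computation of $D_{G_2}(\St_{G_2})$ is exactly the paper's: plug $r_{\alpha}(\St_{G_2})=\nu^{3/2}\delta(1)$, $r_{\beta}(\St_{G_2})=\nu^{5/2}\delta(1)$, $r_T^{G_2}(\St_{G_2})=\nu^2\otimes\nu$ into the alternating sum and cancel via \refeq{eq:Muicprop4.4}. For the second identity you diverge slightly: the paper does not recompute anything, it simply invokes \cite[Lemma 3.1]{Muic1997} (equivalently, compatibility of $\ND_{G_2}$ with parabolic induction), which gives $D_{G_2}(I_{\alpha}(3/2,\delta(1)))=I_{\alpha}(3/2,1_{\GL_2})$ in $\operatorname{R}(G_2)$, and then $D_{G_2}(J_{\alpha}(3/2,\delta(1)))=I_{\alpha}(3/2,1_{\GL_2})-D_{G_2}(\St_{G_2})=J_{\beta}(5/2,\delta(1))$ by \refeq{eq:Muicprop4.4}. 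Your direct route also closes: $r_{\emptyset}(J_{\alpha}(3/2,\delta(1)))$ has five terms, each inducing to $I(\nu^2\otimes\nu)$ in $\operatorname{R}(G_2)$; the cuspidal summands of $r_{\alpha}$ and $r_{\beta}$ likewise induce to $I(\nu^2\otimes\nu)$, and the remaining pieces give $I_{\alpha}(3/2,\delta(1))$ (here you must use the Weyl-twist identity $I_{\alpha}(\nu^{-3/2}\delta(1))=I_{\alpha}(3/2,\delta(1))$ in $\operatorname{R}(G_2)$, the analogue of \refeq{eq:JaIa-1inclu}) and $I_{\beta}(5/2,1_{\GL_2})$, after which everything cancels except $J_{\beta}(5/2,\delta(1))$ — so the claim is correct, at the cost of more bookkeeping than the paper's one-line deduction. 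One cosmetic point: the summand $\nu^{-3/2}\delta(1)$ is a non-unitary twist of a discrete series, so calling these ``tempered summands'' is inaccurate, though it does not affect the argument.
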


\begin{proof}
  This is well known. But as a test of previous proposition, we do the
  calculation:
\begin{equation}\label{eq:DGpi1}
  \begin{aligned}
    & D_{G_2} (\St_{G_2}) = I \circ r_T^{G_2} (\St_{G_2}) - I_{\alpha} \circ r_{\alpha} (\St_{G_2})  - I_{\beta} \circ r_{\beta} (\St_{G_2}) + \St_{G_2}\\
   &= I(\nu^2 \otimes \nu) -I_{\alpha} ( \nu^{3/2} \delta(1))  - I_{\beta } (\nu^{5/2} \delta (1))  + \St_{G_2} \\
   & = I_{\alpha} (3/2, 1_{\GL_2})  - I_{\beta } (\nu^{5/2} \delta (1))  + \St_{G_2} \\
   &= 1_{G_2} .
  \end{aligned}
\end{equation}
  By \cite[Lemma 3.1]{Muic1997}, we have the rest of this proposition.
\end{proof}

In this case $\mathfrak{s} = [T, \xi \otimes \xi]$ and $\mathfrak{J}^{\mathfrak{s}} =
G_2(\C)$ and
$\H^{\mathfrak{s}} = \H ( G_2 (F), 1)$.  By \cite[Section
9.3.2 table 18]{AubertXu2023explicit} and \cite[Table 6.1]{Ram2004Rank2},  we get the proposition for
modules of Hecke algebra:

\begin{prop}\label{prop:Duality4Hecke}
   Using the indexing triples to denote the standard modules, we have:
  \begin{equation}\label{eq:Duality4Hecke}
    \begin{aligned}
      D_{\H^{\mathfrak{s}}} (M_{{t_a, e_{\alpha^{\vee}}+ e_{\beta^{\vee}} , 1 }}) =M_{{t_a, e_{\alpha^{\vee}}+ e_{\beta^{\vee}} , 1 }}^{*}= M_{t_a, 0, 1}, \quad D_{\H^{\mathfrak{s} }} (M_{t_a, e_{\alpha^{\vee}}, 1}) =M_{t_a, e_{\alpha^{\vee}}, 1}^{*}= M_{t_a, e_{\beta^{\vee}}, 1}.
    \end{aligned}
  \end{equation}
\end{prop}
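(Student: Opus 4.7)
The plan is to transfer Proposition \ref{prop:Duality4} from the representation-theoretic side to the Hecke algebra side by using the equivalence of categories $\Rep^{\mathfrak{s}}(G_2) \cong \H^{\mathfrak{s}}\smod$ from Roche's construction (Section \ref{subsec:Principalblocks}). Since $\chi = 1$ and $s = 3/2$ fall into the principal block with Langlands dual $\mathfrak{J}^{\mathfrak{s}} = G_2(\mathbb{C})$, we get $\H^{\mathfrak{s}} \cong \H(G_2(F), 1_J)$ as an Iwahori-Matsumoto Hecke algebra. Under this equivalence, the Aubert-Zelevinsky duality $\ND_{G_2}$ intertwines with the duality operator $D$ on $\H^{\mathfrak{s}}$-modules defined in Section \ref{subsec:Principalblocks}, so it suffices to identify the image of each irreducible subquotient in Proposition \ref{prop:Duality4} as a standard module and then invoke Theorem \ref{thm:KatoEn}.

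Concretely, I would first read off from \cite[Section 9.3.2 table 18]{AubertXu2023explicit} the dictionary
\[
\St_{G_2} \leftrightarrow M_{t_a, e_{\alpha^{\vee}} + e_{\beta^{\vee}}, 1}, \qquad 1_{G_2} \leftrightarrow M_{t_a, 0, 1},
\]
\[
J_{\alpha}(3/2, \delta(1)) \leftrightarrow M_{t_a, e_{\alpha^{\vee}}, 1}, \qquad J_{\beta}(5/2, \delta(1)) \leftrightarrow M_{t_a, e_{\beta^{\vee}}, 1},
\]
cross-checked with \cite[Table 6.1]{Ram2004Rank2}, which lists the Kazhdan-Lusztig parametrization of irreducible $\H(G_2(F), 1_J)$-modules in terms of indexing triples $(s, n, \rho)$ with $s = t_a$ a distinguished semisimple element for the subregular unipotent class. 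The combinatorial structure of the nilpotents $e_{\alpha^{\vee}}$, $e_{\beta^{\vee}}$ inside the Lie algebra of $G_2(\mathbb{C})$ directly encodes the short versus long root discrete series, which is why they attach to the two Langlands quotients.

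Having this dictionary, the result follows by applying $\ND_{G_2}$ on the group side and transferring through the equivalence: Proposition \ref{prop:Duality4} says $\ND_{G_2}(\St_{G_2}) = 1_{G_2}$ and $\ND_{G_2}(J_{\alpha}(3/2, \delta(1))) = J_{\beta}(5/2, \delta(1))$, which translate immediately to
\[
D_{\H^{\mathfrak{s}}}(M_{t_a, e_{\alpha^{\vee}} + e_{\beta^{\vee}}, 1}) = M_{t_a, 0, 1}, \qquad D_{\H^{\mathfrak{s}}}(M_{t_a, e_{\alpha^{\vee}}, 1}) = M_{t_a, e_{\beta^{\vee}}, 1}.
\]
The identity $D_{\H^{\mathfrak{s}}}(M) = M^{*}$ is then exactly Theorem \ref{thm:KatoEn}, recorded in the statement of the proposition.

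The main obstacle is ensuring that the bijection between irreducibles in $\Rep^{\mathfrak{s}}(G_2)$ and standard $\H^{\mathfrak{s}}$-modules is set up consistently with the conventions for Langlands parameters and twisted actions used throughout the paper. In particular, one must verify that Roche's $\kappa$-preserving isomorphism is compatible with the indexing in \cite{KazhdanLusztig1987}, and that the assignment of the nilpotent $e_{\alpha^{\vee}}$ versus $e_{\beta^{\vee}}$ to $J_{\alpha}(3/2, \delta(1))$ versus $J_{\beta}(5/2, \delta(1))$ matches the short-root/long-root convention established in Section \ref{subsec:Principalblocks}; once this is checked by comparing central characters and Jacquet modules against the Jacquet-module computations in Proposition \ref{prop:rarbAll4}, the rest of the argument is purely a lookup, parallel to Proposition \ref{prop:Duality3Hecke}.
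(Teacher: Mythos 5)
Your proposal is correct and follows essentially the same route as the paper: the paper likewise deduces the proposition by matching $\St_{G_2}$, $1_{G_2}$, $J_{\alpha}(3/2,\delta(1))$, $J_{\beta}(5/2,\delta(1))$ with the standard modules $M_{t_a, e_{\alpha^{\vee}}+e_{\beta^{\vee}},1}$, $M_{t_a,0,1}$, $M_{t_a,e_{\alpha^{\vee}},1}$, $M_{t_a,e_{\beta^{\vee}},1}$ via \cite[Section 9.3.2 table 18]{AubertXu2023explicit} and \cite[Table 6.1]{Ram2004Rank2}, transferring Proposition \ref{prop:Duality4} through the type-theoretic equivalence, and identifying $D_{\H^{\mathfrak{s}}}(M)$ with $M^{*}$ by Theorem \ref{thm:KatoEn}. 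Your closing remark about checking the conventions against the Jacquet-module data of Proposition \ref{prop:rarbAll4} is a reasonable extra precaution but is not developed further in the paper either.
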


We now begin to discuss case \refeq{Muic1997itm:3}.
\subsubsection{Case  \refeq{Muic1997itm:3} and  \refeq{Muic1997itm:5}}\label{subsection:3and5}

In case \refeq{Muic1997itm:3}, $\xi_1=\nu^{s_1} \xi=\nu^{s_1-s_2} \xi_2=\nu^{
  \pm 1} \xi_2$. By Proposition \ref{prop:Mgamma}, we have
$$
\begin{aligned}
  I\left(\xi_1 \otimes \xi_2\right) & =I_\alpha\left( \delta (\nu^{\pm 1/2} \xi_2)
  \right)+I_\alpha\left( \nu^{\pm 1/2} \xi_2 \circ \operatorname{det}\right) \\
  & =I_\alpha\left(s, \delta\left(\nu^{s_2-s \pm 1 / 2} \xi\right)\right)+I_\alpha\left(s, \nu^{s_2-s \pm 1 / 2} \xi \circ \operatorname{det}\right) .
\end{aligned}
$$

In case \refeq{Muic1997itm:5}, $\xi_1=\nu^{s_1} \xi=\xi_2^{-2} \nu^{ \pm 1}$. Thus
$$
I\left(\xi_1 \otimes \xi_2\right)=I\left(\nu^{ \pm 1} \xi_2^{-2} \otimes \xi_2\right) = I\left(\nu^{ \pm 1} \xi_2 \otimes \xi_2\right) .
$$
Case \refeq{Muic1997itm:5} is reduced to case \refeq{Muic1997itm:3} above.

By \cite[Theorem 3.1 (i)]{Muic1997}, if the character $\chi:=\nu^{s_2-s \pm
  1 / 2} \xi$ is unitary, then $I_\alpha\left(s, \delta\left(\chi \right)\right)$ and
$I_\alpha\left(s, \chi \circ \det \right)$ are irreducible unless
$s= \pm 1 / 2, \chi$ is quadratic, $s= \pm 3 / 2,
\chi = 1 $ or $s= \pm 1 / 2, \chi$ is cubic. ($s= \pm 3 / 2,
\chi = 1 $ is already discussed)

\subsubsection{Case \refeq{Muic1997itm:3}  within the case $\mathfrak{s}=[T, \xi \otimes 1]_G$ the
  length 2 case}

If $\xi_2$ is ramified, then by the discussion in
\cite[Section 9.3.3]{AubertXu2023explicit} $\xi$ is not quadratic nor cubic, we
have $\mathfrak{J}^{\mathfrak{s}} =\GL_2(\C)$ and $\H^{\mathfrak{s}} = \H
(\GL_2(F), 1)$. The two irreducible composition factors $I_\alpha\left( \delta (\nu^{\pm 1/2} \xi_2)
\right)$ and  $I_\alpha\left( \nu^{\pm 1/2} \xi_2 \circ \operatorname{det}\right)$
correspond to the standard module indexed by $(t_a, e_{\alpha},1)$ and $(t_a,
0, 1)$ respectively.  Using
the indexing triples to denote the standard modules, we conclude that
\[
  D_{\H^{\mathfrak{s}}}([M_{t_a, e_{\alpha},1} ]) = [M_{t_a, e_{\alpha},1}^{*}] = [M_{t_a,
    0, 1}].
\]
\subsubsection{Case \refeq{Muic1997itm:3}  within the case $\mathfrak{s}=[T,  1]_G$ the
  length 2 case}
If $\xi_2$ is unramified, then $\xi_1$ is also unramified and we are in the case
$\mathfrak{s} = [T, 1]$ and $\mathfrak{J}^{\mathfrak{s}} =G_2(\C)$ and $\H^{\mathfrak{s}} = \H
(G_2(F), 1)$.  By \cite[Section
9.3.3 table 21]{AubertXu2023explicit} and \cite[Table 6.1]{Ram2004Rank2}, $I_\alpha\left( \delta (\nu^{\pm 1/2} \xi_2)
\right)$ and  $I_\alpha\left( \nu^{\pm 1/2} \xi_2 \circ \operatorname{det}\right)$
correspond to the standard module indexed by $(t_g, e_{\alpha},1)$ and $(t_g,
0, 1)$ respectively. Using
the indexing triples to denote the standard modules, we have:
\[
  D_{\H^{\mathfrak{s}}}([M_{t_g, e_{\alpha},1} ]) = [M_{t_g, e_{\alpha},1}^{*}] = [M_{t_g,
    0, 1}].
\]

In the following, we will discuss the cases when
\[
  I_\alpha\left(s,
  \delta\left(\nu^{s_2-s \pm 1 / 2} \xi\right) \right) \textrm{ and } I_\alpha\left(s,
  \nu^{s_2-s \pm 1 / 2} \xi \circ \operatorname{det}\right)
\]
are reducible. From \cite[Theorem 3.1]{Muic1997}, it is enough to consider only $s >0$.

\subsection{Case \refeq{Muic1997itm:3} computations for $s=1/2$ and $\chi^2 =1$, $\chi \neq 1$}
We use notations from \ref{subsection:3and5}. If $s=1/2$ and $\chi^2 =1$, then by \cite[Theorem 3.1]{Muic1997} the four terms $I_{\gamma} (s, \delta (\chi))$
and $I_{\gamma} (s, \chi \circ \det)$, $\gamma = \alpha$, $\beta$ all reduces.

If $s=1/2$ and $\chi^2=1$, then
\[
  I(\nu \chi \otimes \chi) = I (\chi \otimes
\nu)  = I (\chi \nu \otimes  \nu^{-1})  = I (\chi \otimes  \nu^{-1} \chi) = I(\nu^{-1} \chi \otimes \nu) = I(\chi \otimes
\nu^{-1})
\]
in $[\operatorname{R}(G_2)] $ (we omit the terms differ by
$s_{\alpha}$). Therefore
\begin{equation}
  \begin{aligned}
 I_{\beta} (\nu^{1/2} \chi \circ \det) +
  I_{\beta} \left( \nu^{1/2} \delta (\chi) \right) & =I (\chi \otimes \nu) = I (\nu \otimes \chi)=I
  (\nu \chi \otimes \chi)  \\
  &   =I_{\alpha} (\nu^{1/2} \chi \circ \det) + I_{\alpha}
  (\nu^{1/2} \delta (\chi)) \ [\operatorname{R}(G_2)] .
  \end{aligned}
\end{equation}
Moreover, since $I^{\beta} (\chi \nu \otimes \chi) = I^{\beta} (\nu
\otimes \chi) \ [\operatorname{R}(L_{\beta})]$ and they are irreducible, they are isomorphic as representations,
and
\[
  I(\chi \nu \otimes \chi) = I_{\beta}(I^{\beta} (\chi \nu \otimes \chi)) =I_{\beta}(I^{\beta} ( \nu \otimes \chi)) =
  I(\nu \otimes \chi)
\]
as representations. Similarly, we have
\[
  I(\nu \chi \otimes
  \chi) \cong I(\chi \otimes \nu) \cong I(\nu \otimes \chi)
\]
as representations and they are all isomorphic representations of $I (\nu \chi \otimes \chi)$.

\begin{cor}\label{cor:ra1}
  If we take $s=1/2$, $\chi^2 =1$ in Lemma \ref{lem:ra}, we
  have in $\operatorname{R}(L_{\alpha})$:
  \begin{equation}\label{eq:raIaSt1}
  r_{\alpha} \left( I_{\alpha} \left(1/2, \delta (\chi) \right) \right) = \nu^{1/2} \delta (\chi) +\nu^{-1/2} \delta (\chi) + I^{\alpha} (\nu  \otimes  \chi)+I^{\alpha} (\nu \chi \otimes \nu^{-1} ),
\end{equation}

\begin{equation}\label{eq:raIaTriv1}
  r_{\alpha} \left( I_{\alpha}(1/2, \chi \circ \det) \right) = \nu^{1/2}  \chi \circ \det +\nu^{-1/2}\chi \circ \det + I^{\alpha} ( \chi \otimes \nu^{-1} )+I^{\alpha} (\nu   \otimes \nu^{-1} \chi),
\end{equation}

\begin{equation}\label{eq:raIbSt1}
   \begin{aligned}
     r_{\alpha} \left( I_{\beta}\left(1/2, \delta (\chi) \right) \right) &= I^{\alpha} ( \chi \otimes \nu) + I^{\alpha} (\nu \otimes \nu^{-1} \chi) + I^{\alpha} (\nu \chi \otimes  \chi) \\
     & =\nu^{1/2} \delta (\chi) +\nu^{1/2} \chi \circ  \det + I^{\alpha} ( \chi \otimes \nu) + I^{\alpha} (\nu \otimes \nu^{-1} \chi),
      \end{aligned}
\end{equation}
\begin{equation}\label{eq:raIbTriv1}
   \begin{aligned}
     r_{\alpha} \left(I_{\beta}(1/2, \chi \circ \det) \right) & = I^{\alpha} (\nu \chi \otimes \nu^{-1}) + I^{\alpha} (\nu^{-1} \otimes  \chi) + I^{\alpha} ( \chi \otimes \nu^{-1} \chi) \\
     & = \nu^{-1/2} \delta (\chi) + \nu^{-1/2} \chi \circ \det+ I^{\alpha} (\nu \chi \otimes \nu^{-1}) + I^{\alpha} (\nu^{-1} \otimes  \chi) .
     \end{aligned}
\end{equation}

\end{cor}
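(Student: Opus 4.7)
The plan is to obtain Corollary \ref{cor:ra1} as a direct specialization of Lemma \ref{lem:ra} at $s = 1/2$ with $\chi^{2} = 1$, combined with a simplification of the reducible $I^{\alpha}$-terms via Proposition \ref{prop:Mgamma}. First I would substitute $s = 1/2$ into each of the four identities of Lemma \ref{lem:ra} and use the relations $\chi^{-1} = \chi$ and $\chi^{\pm 2} = 1$ to simplify the exponents of $\nu$ and $\chi$. This immediately yields the first equality in each of \refeq{eq:raIaSt1}, \refeq{eq:raIaTriv1}, \refeq{eq:raIbSt1}, and \refeq{eq:raIbTriv1}.

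Next I would apply the reducibility criterion from Proposition \ref{prop:Mgamma}(1): $I^{\alpha}(\chi_{1} \otimes \chi_{2})$ reduces iff $\chi_{1}\chi_{2}^{-1} = \nu^{\pm 1}$. A quick inspection of the six $I^{\alpha}$-terms produced shows that the ratios $\chi_{1}\chi_{2}^{-1}$ are $\nu\chi^{-1}$, $\nu^{2}\chi$, $\nu\chi$, $\nu^{2}\chi$ for four of them and are equal to $\nu$ for exactly two: $I^{\alpha}(\nu\chi \otimes \chi)$ occurring in \refeq{eq:raIbSt1} and $I^{\alpha}(\chi \otimes \nu^{-1}\chi)$ occurring in \refeq{eq:raIbTriv1}. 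Since $\chi \neq 1$ (the case $\chi = 1$ having been handled in the preceding subsection devoted to Part I), none of the remaining ratios can equal $\nu^{\pm 1}$, so the four other $I^{\alpha}$-terms are irreducible and are kept as such.

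For the two reducible terms, Proposition \ref{prop:Mgamma}(2) gives the two composition factors as $\delta(\nu^{1/2}\chi)$ and $\nu^{1/2}\chi \circ \det$ in the first case, and $\delta(\nu^{-1/2}\chi)$ and $\nu^{-1/2}\chi \circ \det$ in the second. Using the trivial twist identity $\nu^{s}\delta(\chi) = \delta(\nu^{s}\chi)$ for the generalized Steinberg representation, these are rewritten in the normalized form $\nu^{\pm 1/2}\delta(\chi) + \nu^{\pm 1/2}\chi \circ \det$ that appears on the right-hand side of the stated corollary, producing the second equalities in \refeq{eq:raIbSt1} and \refeq{eq:raIbTriv1}.

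The argument is almost entirely mechanical; the only potential pitfall is the bookkeeping involved in distinguishing which of the six induced $I^{\alpha}$-terms remain irreducible and which must be further decomposed, and correctly handling the twist identification for the generalized Steinberg. There is no deeper obstacle to overcome, because Lemma \ref{lem:ra} has already incorporated the full output of the Bernstein--Zelevinsky geometric lemma for $r_{\alpha} \circ I_{\gamma}$ applied to $\delta(\chi)$ and $\chi \circ \det$, so the corollary is merely a transparent specialization.
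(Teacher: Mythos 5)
Your proposal is correct and matches the paper's (implicit) argument: the corollary is exactly the specialization $s=1/2$, $\chi^{-1}=\chi$, $\chi^{2}=1$ of Lemma \ref{lem:ra}, with the two terms of ratio $\nu$ under $\alpha^{\vee}$, namely $I^{\alpha}(\nu\chi\otimes\chi)$ and $I^{\alpha}(\chi\otimes\nu^{-1}\chi)$, decomposed via Proposition \ref{prop:Mgamma}(2) and the twist identity $\nu^{\pm 1/2}\delta(\chi)=\delta(\nu^{\pm 1/2}\chi)$. Your list of the remaining ratios has a minor bookkeeping slip (e.g.\ the terms $I^{\alpha}(\chi\otimes\nu)$ and $I^{\alpha}(\nu^{-1}\otimes\chi)$ have ratio $\nu^{-1}\chi$, not $\nu\chi^{-1}$), but since none of these can equal $\nu^{\pm1}$ for $\chi$ quadratic nontrivial, the conclusion is unaffected.
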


\begin{cor}\label{cor:rb1}
  If we take $s=1/2$, $\chi^2 =1$ in lemma \ref{lem:rb}, we
  have in $\operatorname{R}(L_{\beta})$:
  \begin{equation}\label{eq:rbIbSt1}
    \begin{aligned}
 r_{\beta} \left(I_{\beta} \left(1/2, \delta (\chi) \right) \right) = \nu^{1/2} \delta (\chi) + \nu^{-1/2} \delta (\chi) + I^{\beta} (\nu \otimes  \chi) + I^{\beta} ( \chi \otimes \nu \chi),
\end{aligned}
\end{equation}

  \begin{equation}\label{eq:rbIbTriv1}
    \begin{aligned}
    r_{\beta} (I_{\beta}(1/2,  \chi \circ \det))  = \nu^{1/2} \chi \circ \det + \nu^{-1/2}  \chi \circ \det + I^{\beta} (\nu^{-1} \otimes \nu \chi) + I^{\beta} (\nu^{-1} \chi \otimes  \chi),
      \end{aligned}
\end{equation}

\begin{equation}\label{eq:rbIaSt1}
    \begin{aligned}
      r_{\beta} (I_{\alpha}(1/2, \delta (\chi))) &= I^{\beta} (\nu \chi \otimes  \chi) + I^{\beta} (\nu^{-1}  \otimes \nu \chi) + I^{\beta} ( \chi \otimes \nu )\\
      &= \nu^{1/2} \delta (\chi) + \nu^{1/2} \chi \circ \det+ I^{\beta} (\nu  \otimes  \chi) + I^{\beta} (\nu^{-1}  \otimes \nu \chi),
    \end{aligned}
\end{equation}

 \begin{equation}\label{eq:rbIaTriv1}
   \begin{aligned}
     r_{\beta} \left( I_{\alpha}\left(1/2, \chi \circ \det \right) \right)& = I^{\beta} ( \chi \otimes \nu \chi) +I^{\beta}(\nu^{-1} \otimes \chi) + I^{\beta} (\nu^{-1} \chi \otimes \nu)\\
      &= \nu^{-1/2} \delta (\chi) + \nu^{-1/2} \chi \circ \det+  I^{\beta} ( \chi \otimes \nu \chi) +I^{\beta}(\nu^{-1} \otimes \chi) ,
     \end{aligned}
 \end{equation}

In \refeq{eq:rbIaTriv1} and \refeq{eq:rbIaSt1}, we used \cite[Lemma 5.4 (iii)]{BernsteinDeligneKazhdan1986}: $I^{\beta}(\chi_1 \otimes \chi_2) = I^{\beta}
\left(s_{\beta}(\chi_1 \otimes \chi_2)\right)$ in $\operatorname{R}(L_{\beta})$.

\end{cor}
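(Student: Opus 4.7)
The plan is to deduce each of the four equations of Corollary \ref{cor:rb1} by specialising the general formulas of Lemma \ref{lem:rb} at $s = 1/2$ and $\chi^{-1} = \chi$ (since $\chi^2 = 1$), and then simplifying the resulting $I^{\beta}$-terms with the help of Proposition \ref{prop:Mgamma} and \cite[Lemma 5.4 (iii)]{BernsteinDeligneKazhdan1986}. The first equalities in \eqref{eq:rbIbSt1} and \eqref{eq:rbIbTriv1} are pure substitution: one plugs $s = 1/2$, $\chi^{-1} = \chi$ into the first two identities of Lemma \ref{lem:rb} and collects terms. A quick check using the reducibility criterion of Proposition \ref{prop:Mgamma}(1) shows that for $\chi$ unitary and $\chi \neq 1$, neither $I^{\beta}(\nu \otimes \chi)$, $I^{\beta}(\chi \otimes \nu\chi)$, $I^{\beta}(\nu^{-1} \otimes \nu\chi)$ nor $I^{\beta}(\nu^{-1}\chi \otimes \chi)$ reduces, so nothing further needs to be done for these two equations.

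For the first equality of \eqref{eq:rbIaSt1}, substitution in the third formula of Lemma \ref{lem:rb} gives $I^{\beta}(\nu\chi \otimes \chi) + I^{\beta}(\nu^{-1} \otimes \nu\chi) + I^{\beta}(\chi \otimes \nu)$. Among these, only $I^{\beta}(\chi \otimes \nu)$ has its second parameter equal to $\nu^{\pm 1}$, so Proposition \ref{prop:Mgamma}(1) applies and Proposition \ref{prop:Mgamma}(3) decomposes it (after recognising $\chi = \nu^{-1/2}(\nu^{1/2}\chi)$) into $\nu^{1/2}\delta(\chi) + \nu^{1/2}\chi \circ \det$. Finally, I would invoke \cite[Lemma 5.4 (iii)]{BernsteinDeligneKazhdan1986} to rewrite $I^{\beta}(\nu\chi \otimes \chi) = I^{\beta}(\nu \otimes \chi)$ in $\operatorname{R}(L_{\beta})$ (using $\chi^2 = 1$ to identify $s_{\beta}(\nu\chi \otimes \chi)$ with $\nu \otimes \chi$ in the parametrisation \eqref{eq:Characterab}), yielding the second expression.

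The argument for \eqref{eq:rbIaTriv1} proceeds in an entirely parallel way: direct substitution in the fourth formula of Lemma \ref{lem:rb} produces $I^{\beta}(\chi \otimes \nu\chi) + I^{\beta}(\nu^{-1} \otimes \chi) + I^{\beta}(\nu^{-1}\chi \otimes \nu)$, and now only $I^{\beta}(\nu^{-1}\chi \otimes \nu)$ satisfies the reducibility criterion of Proposition \ref{prop:Mgamma}(1). Writing it as $I^{\beta}(\nu^{-1/2}\chi' \otimes \nu)$ with $\chi' = \nu^{-1/2}\chi$ and applying Proposition \ref{prop:Mgamma}(3) gives the decomposition $\nu^{-1/2}\delta(\chi) + \nu^{-1/2}\chi \circ \det$, finishing the last equation.

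The bookkeeping is completely mechanical once one has the reducibility criterion for $I^{\beta}$ in hand; the only mildly delicate point is the Weyl reflection calculation for $s_{\beta}$ on characters of $T$: under the identification \eqref{eq:Characterab} one checks that $s_{\beta}(\chi_1 \otimes \chi_2) = \chi_1\chi_2 \otimes \chi_2^{-1}$, and this is what makes the substitution $I^{\beta}(\nu\chi \otimes \chi) = I^{\beta}(\nu \otimes \chi)$ work when $\chi^2 = 1$. No other new ingredient is needed beyond Lemma \ref{lem:rb} and Proposition \ref{prop:Mgamma}.
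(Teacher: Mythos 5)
Your proposal is correct and follows exactly the route the paper intends (the corollary is stated there without proof as a direct specialization): substitute $s=1/2$, $\chi^{-1}=\chi$ into Lemma \ref{lem:rb}, decompose the two reducible terms $I^{\beta}(\chi\otimes\nu)$ and $I^{\beta}(\nu^{-1}\chi\otimes\nu)$ via Proposition \ref{prop:Mgamma}(3) after twisting, and use \cite[Lemma 5.4 (iii)]{BernsteinDeligneKazhdan1986} with $s_{\beta}(\chi_1\otimes\chi_2)=\chi_1\chi_2\otimes\chi_2^{-1}$ to rewrite $I^{\beta}(\nu\chi\otimes\chi)=I^{\beta}(\nu\otimes\chi)$. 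Your reflection formula and the identification of which $I^{\beta}$-terms reduce (second parameter equal to $\nu^{\pm1}$, i.e. $(\chi_1\otimes\chi_2)\circ\beta^{\vee}=\nu^{\pm1}$) are both consistent with the parametrizations \eqref{eq:Torusab}--\eqref{eq:Characterab}, so no gap remains.
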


\begin{prop}[G. Muic Proposition 4.1]
  Suppose that $\chi$ is a character of order $2$. Then we have the following:
  \begin{enumerate}[(1)]
  \item The induced representation $I(\nu \chi \otimes \chi)$ has a unique
    irreducible subrepresentation $\pi (\chi)$. We have $r_{\phi} (\pi (\chi)) =
    \nu \chi \otimes \chi + \nu \otimes \chi + \chi \otimes \nu$. $\pi (\chi)$ is
    square integrable.
  \item In $\operatorname{R}(G_2)$, we have:
    \begin{equation}
          \begin{aligned}
           I_{\alpha} (1/2, \delta (\chi)) & = \pi (\chi) + J_{\alpha} (1/2, \delta
           (\chi)), \\
           I_{\beta} (1/2, \delta (\chi)) & = \pi (\chi) +J_{\beta} (1/2, \delta (\chi)) ,\\
           I_{\alpha} (1/2, \chi \circ \det ) & = J_{\beta} (1,\pi (1, \chi)) +J_{\beta}(1/2, \delta (\chi)) ,\\
           I_{\beta}(1/2, \chi \circ \det ) &= J_{\beta} (1, \pi (1, \chi)) + J_{\alpha} (1/2, \delta (\chi)).
          \end{aligned}
        \end{equation}
  \end{enumerate}
\end{prop}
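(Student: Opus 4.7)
The plan is to mirror the arguments of Proposition \ref{prop:I1v} with simplifications coming from the fact that, since $\chi$ is a nontrivial quadratic character, the character $\nu\chi \otimes \chi$ is \emph{regular} (its $W$-orbit is a full orbit and in particular $s_{3\alpha+2\beta}$ acts nontrivially), and the induced representations $I^{\alpha}(\chi\otimes \nu)$, $I^{\beta}(\nu\otimes \chi)$, $I^{\beta}(\chi \otimes \nu\chi)$ etc.\ appearing in Corollaries \ref{cor:ra1}--\ref{cor:rb1} are irreducible. This regularity eliminates the analogue of the extra subrepresentation $\pi'(1)$ that complicated the $\chi = 1$ case.

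First, I would apply the Langlands quotient theorem to produce three distinct Langlands quotients: $J_{\alpha}(1/2, \delta(\chi))$ as the unique irreducible quotient of $I_{\alpha}(1/2, \delta(\chi))$, $J_{\beta}(1/2, \delta(\chi))$ as the unique irreducible quotient of $I_{\beta}(1/2, \delta(\chi))$, and $J_{\beta}(1,\pi(1,\chi))$ as the unique irreducible quotient of $I(\nu \otimes \chi) = I_{\beta}(1,\pi(1,\chi))$ (using $\chi^{-1} = \chi$). Next, I would exploit the Weyl-group identity in the Grothendieck group
\[
I_{\beta}(1/2,\delta(\chi)) + I_{\beta}(1/2, \chi\circ\det) = I(\chi\otimes\nu) = I_{\alpha}(1/2,\delta(\chi)) + I_{\alpha}(1/2, \chi\circ\det) \ [\operatorname{R}(G_2)],
\]
together with the fact that $I_{\alpha}(1/2,\delta(\chi))$ and $I_{\beta}(1/2,\delta(\chi))$ are both subrepresentations of $I(\nu\chi \otimes \chi) \cong I(\chi\otimes \nu) \cong I(\nu \otimes \chi)$, to define $\pi(\chi)$ as an irreducible constituent of their common intersection. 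By second adjointness, an irreducible constituent $\pi$ sitting inside both forces both $\nu^{1/2}\delta(\chi)$ to appear in $r_{\alpha}(\pi)$ and $\nu^{1/2}\delta(\chi)$ to appear in $r_{\beta}(\pi)$; crossing this against Corollaries \ref{cor:ra1} and \ref{cor:rb1} and eliminating the three Langlands quotients already located, the only possibility is a single new constituent $\pi(\chi)$ with
\[
r_{\alpha}\pi(\chi) = \nu^{1/2}\delta(\chi), \qquad r_{\beta}\pi(\chi) = \nu^{1/2}\delta(\chi),
\]
from which $r_{\emptyset}\pi(\chi) = \nu\chi\otimes\chi + \nu\otimes\chi + \chi\otimes\nu$ follows by applying $r_T^{L_{\alpha}}$.

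Once the Jacquet modules of $\pi(\chi)$ and of the three Langlands quotients are fixed, the four identities in (2) are forced by a multiplicity count: by \cite{Rodier1981}, each $I_{\gamma}(1/2, -)$ has length $2$, and in each case one composition factor is the already-identified Langlands quotient while the other must match in Jacquet data. Uniqueness of $\pi(\chi)$ as a subrepresentation of $I(\nu\chi\otimes\chi)$ then follows from the adjointness computation $\Hom_{G_2}(\pi, I(\nu\chi\otimes\chi)) = \Hom_T(r_{\emptyset}\pi, \nu\chi \otimes \chi)$ and the fact that $\nu\chi \otimes \chi$ appears with multiplicity one in the full principal-series Jacquet module. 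Finally, square-integrability of $\pi(\chi)$ follows by Casselman's criterion: the three exponents $\nu\chi\otimes\chi$, $\nu\otimes\chi$, $\chi\otimes\nu$ all pair strictly positively against the fundamental coweights $\omega_1^{\vee}, \omega_2^{\vee}$ (this is the standard check that the associated central character lies in the strict negative cone of the real part of $X^*(T)\otimes \mathbb{R}$).

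The main obstacle, as in Proposition \ref{prop:I1v}, will be showing that the intersection of $I_{\alpha}(1/2, \delta(\chi))$ and $I_{\beta}(1/2, \delta(\chi))$ in $\operatorname{R}(G_2)$ contains \emph{exactly} $\pi(\chi)$ and no extra Langlands quotient; the regularity of $\nu\chi \otimes \chi$ and the disjointness of the remaining Jacquet-module constituents in Corollaries \ref{cor:ra1}--\ref{cor:rb1} (verified term by term in the spirit of equation \eqref{eq:Muic4.21}) are what make this work. The argument is cleaner than in the $\chi = 1$ case precisely because no $m = 0, 1$ ambiguity arises.
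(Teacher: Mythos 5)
Your overall strategy is the same as the paper's (the Grothendieck-group identity $I_{\beta}(1/2,\delta(\chi))+I_{\beta}(1/2,\chi\circ\det)=I(\chi\otimes\nu)=I_{\alpha}(1/2,\delta(\chi))+I_{\alpha}(1/2,\chi\circ\det)$, the three Langlands quotients, Rodier's multiplicity-one statement, and a comparison against Corollaries \ref{cor:ra1}--\ref{cor:rb1}), but there is a concrete error at the key step: your claimed Jacquet modules
\[
r_{\alpha}\pi(\chi)=\nu^{1/2}\delta(\chi),\qquad r_{\beta}\pi(\chi)=\nu^{1/2}\delta(\chi)
\]
are wrong, and they are inconsistent with the very formula for $r_{\emptyset}\pi(\chi)$ you then assert. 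Applying $r_{T}^{L_{\alpha}}$ to $\nu^{1/2}\delta(\chi)$ alone gives only the single character $\nu\chi\otimes\chi$, not the three-term sum $\nu\chi\otimes\chi+\nu\otimes\chi+\chi\otimes\nu$. The correct statement (and what the paper proves, cf.\ Proposition \ref{prop:rarbAll1}) is $r_{\alpha}\pi(\chi)=\nu^{1/2}\delta(\chi)+I^{\alpha}(\nu\otimes\chi)$ and $r_{\beta}\pi(\chi)=\nu^{1/2}\delta(\chi)+I^{\beta}(\nu\otimes\chi)$. The missing terms are forced: since $I(\nu\chi\otimes\chi)\cong I(\nu\otimes\chi)=I_{\alpha}\bigl(I^{\alpha}(\nu\otimes\chi)\bigr)$ as representations, the unique irreducible subrepresentation $\pi(\chi)$ gives $0\neq\Hom_{G_2}\bigl(\pi(\chi),I_{\alpha}(I^{\alpha}(\nu\otimes\chi))\bigr)=\Hom_{L_{\alpha}}\bigl(r_{\alpha}\pi(\chi),I^{\alpha}(\nu\otimes\chi)\bigr)$, and $I^{\alpha}(\nu\otimes\chi)$ is irreducible because $\chi\neq 1$ is quadratic; so $r_{\alpha}\pi(\chi)$ must contain it in addition to $\nu^{1/2}\delta(\chi)$. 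Your ``elimination'' argument, which concludes that the common constituent carries only the two $\delta$-type exponents, therefore fails, and with it the stated derivation of part (1).

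The error also undermines the way you propose to get part (2): if $r_{\alpha}\pi(\chi)$ were just $\nu^{1/2}\delta(\chi)$, the Jacquet-module bookkeeping in \refeq{eq:raIaSt1}--\refeq{eq:raIbTriv1} would not close up (the factor $I^{\alpha}(\nu\otimes\chi)$ appearing in $r_{\alpha}(I_{\beta}(1/2,\delta(\chi)))$ would have to be absorbed by some other constituent, creating exactly the kind of ambiguity you claim is absent). The fix is the paper's route: start from Rodier (length $4$, multiplicity $1$, hence a unique irreducible subrepresentation), observe that $\pi(\chi)$ is simultaneously the unique subrepresentation of $I_{\alpha}(\nu^{1/2}\delta(\chi))$, of $I_{\beta}(\nu^{1/2}\delta(\chi))$, and of $I(\nu\otimes\chi)$, use adjointness three times to pin down $r_{\alpha}\pi(\chi)=\nu^{1/2}\delta(\chi)+I^{\alpha}(\nu\otimes\chi)$, and only then deduce $r_{\emptyset}\pi(\chi)$ and the four length-two decompositions by showing, via the non-containment of \refeq{eq:raIaTriv1} in \refeq{eq:raIbTriv1} or \refeq{eq:raIbSt1}, that $I_{\alpha}(1/2,\delta(\chi))$ contains nothing beyond $\pi(\chi)+J_{\alpha}(1/2,\delta(\chi))$. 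Your remaining ingredients (regularity of $\nu\chi\otimes\chi$, the length count, the multiplicity-one uniqueness argument, and the Casselman criterion for square-integrability) are fine, but the central Jacquet-module computation must be corrected before the proof stands.
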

\begin{proof}

By a theorem of F. Rodier \cite{Rodier1981}, we know $I(\nu \chi  \otimes \chi)$ has length $4$,
multiplicity $1$, thus it contains a unique irreducible subrepresentation,
denoted by $\pi (\chi)$.

We know that
\begin{equation}
  \begin{aligned}
 I_{\beta} (\nu^{1/2} \chi \circ \det) +
  I_{\beta} (\nu^{1/2} \delta (\chi))  & =I (\chi \otimes \nu) = I (\nu \otimes \chi)=I
  (\nu \chi \otimes \chi) \\
  &   =I_{\alpha} (\nu^{1/2} \chi \circ \det) + I_{\alpha}
  (\nu^{1/2} \delta (\chi)) .
  \end{aligned}
\end{equation}

Notice that $\pi (\chi)$ is a (unique) subrepresentation of $I_{\beta}(\nu^{1/2} \delta (\chi))$
  and $I_{\alpha} (\nu^{1/2} \delta (\chi))$ since they are subrepresentations
  of $I(\chi \otimes \nu)$ and $I(\nu \chi \otimes \chi)$ respectively. $r_{\alpha} (\pi (\chi))$ must be contained in the common parts of
  \refeq{eq:raIaSt1} and \refeq{eq:raIbSt1}. Since
  \[
    0 \neq \Hom_{G_2} (\pi (\chi), I (\nu \otimes \chi)) =
  \Hom_{L_{\alpha}}(r_{\alpha} (\pi (\chi)), I^{\alpha} (\nu \otimes \chi) ),
\]
 we know $r_{\alpha} (\pi (\chi))$ must contain $I^{\alpha} (\nu \otimes \chi)$. Repeat the above argument with
  $I(\nu \otimes \chi)$ replaced by $I_{\alpha} (\nu^{1/2} \delta (\chi))$, $r_{\alpha} (\pi (\chi))$ must contain $\nu^{1/2} \delta (\chi) $. To satisfy these three conditions, we
  conclude that
  \[
    r_a (\pi(\chi)) = \nu^{1/2} \delta (\chi) +
    I^{\alpha} (\nu \otimes \chi).
  \]
  Composing with $r^{L_{\alpha}}_{T}$, we prove $(1)$.

  By Langlands quotient theorem applied to the following:
  \[
    I(\chi \nu \otimes
    \chi) = I (\nu \otimes \chi) =I_{\beta} (1,I^{\beta} (1 \otimes \chi)),
  \]
  we  see $J_{\beta} (1, \pi (1, \chi))$ is a composition factor of $I(\chi \nu
  \otimes \chi)$. The same theorem applied to $I_{\alpha} (1/2, \delta (\chi))$
  and $I_{\beta} (1/2, \delta (\chi))$, we find the rest two (it has length $4$)
  composition factors: $J_{\alpha} (1/2, \delta (\chi))$ and $J_{\beta} (1/2,
  \delta (\chi))$.

  We have
  \[
    \pi(\chi) + J_{\alpha}(1/2, \delta (\chi)) \subseteq I_{\alpha} (1/2,
    \delta (\chi)), \  [\operatorname{R}(G_2)]
  \]
  and we can exclude the possibility that $I_{\alpha} (1/2,
  \delta (\chi))$ contains more terms because \refeq{eq:raIaTriv1} is not
  contained in \refeq{eq:raIbTriv1} or \refeq{eq:raIbSt1}. Using the same
  argument for $\beta$ case, we can prove $(2)$.

\end{proof}

Using similar methods of the proposition above, we can find the image of $4$
components under $r_{\alpha}$ and $r_{\beta}$.
We list them in the following propositions:
\begin{prop}\label{prop:rarbAll1}
  We have the following:
  \begin{equation}\label{eq:rarbAll1}
    \begin{centering}
  \begin{aligned}
    r_{\alpha} (\pi (\chi))  = \nu^{1/2} \delta &(\chi) +I^{\alpha} (\nu \otimes \chi), \ r_{\beta} (\pi (\chi)) = \nu^{1/2} \delta (\chi) +I^{\beta} (\nu \otimes \chi),\\
    r_{\alpha} (J_{\alpha} (1/2, \delta (\chi)))& = \nu^{-1/2} \delta (\chi) + I^{\alpha} (\nu \chi \otimes \nu^{-1}), \\
    r_{\beta} (J_{\alpha}(1/2, \delta (\chi))) & = \nu^{1/2} \chi \circ \det    + I^{\beta} (\nu^{-1} \otimes \nu \chi), \\
    r_{\alpha} (J_{\beta} (1/2, \delta (\chi)))  & = \nu^{1/2} \chi \circ \det + I^{\alpha} (\nu \otimes \nu^{-1} \chi), \\
    r_{\beta} (J_{\beta} (1/2, \delta (\chi)))& = \nu^{-1/2} \delta (\chi)  + I^{\beta} (\chi \otimes \nu \chi), \\
    r_{\alpha} (J_{\beta} (1, \pi (1, \chi))) & = \nu^{-1/2} \chi \circ \det + I^{\alpha} (\chi \otimes \nu^{-1}), \\  r_{\beta} (J_{\beta} (1, \pi (1, \chi))) & = \nu^{-1/2} \chi \circ \det  + I^{\beta} (\nu^{-1} \chi \otimes \chi).
  \end{aligned}
  \end{centering}
\end{equation}
\end{prop}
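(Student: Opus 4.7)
The plan is to derive all eight identities by combining the length-two decompositions just established in the previous proposition with the Jacquet module computations of Corollaries \ref{cor:ra1} and \ref{cor:rb1}, and then by subtracting the already known contribution of $\pi(\chi)$.

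First, I would compute $r_{\beta}(\pi(\chi))$ by mimicking verbatim the argument used for $r_{\alpha}(\pi(\chi))$ in the previous proof. The embedding $\pi(\chi) \hookrightarrow I_{\beta}(1/2, \delta(\chi))$ combined with Frobenius reciprocity forces $\nu^{1/2}\delta(\chi) \subseteq r_{\beta}(\pi(\chi))$; the embedding $\pi(\chi) \hookrightarrow I(\chi \otimes \nu) = I_{\beta}(I^{\beta}(\chi \otimes \nu))$, together with the Weyl equivalence $I^{\beta}(\chi \otimes \nu) = I^{\beta}(\nu \otimes \chi)$ in $\operatorname{R}(L_{\beta})$, forces $I^{\beta}(\nu \otimes \chi) \subseteq r_{\beta}(\pi(\chi))$. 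Intersecting the common composition factors of \refeq{eq:rbIbSt1} and \refeq{eq:rbIaSt1} yields exactly $\nu^{1/2}\delta(\chi) + I^{\beta}(\nu \otimes \chi)$, which gives equality.

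Next, I would compute $r_\gamma(J_{\alpha}(1/2,\delta(\chi)))$ and $r_\gamma(J_{\beta}(1/2,\delta(\chi)))$ by the decompositions
\[
 I_{\alpha}(1/2, \delta(\chi)) = \pi(\chi) + J_{\alpha}(1/2,\delta(\chi)), \qquad I_{\beta}(1/2, \delta(\chi)) = \pi(\chi) + J_{\beta}(1/2,\delta(\chi))
\]
of the previous proposition. Applying $r_{\alpha}$ and $r_{\beta}$ and substituting \refeq{eq:raIaSt1}, \refeq{eq:raIbSt1}, \refeq{eq:rbIaSt1}, \refeq{eq:rbIbSt1} as well as the values of $r_\gamma(\pi(\chi))$ just obtained, I subtract and simplify using the Weyl equivalences $I^{\alpha}(\chi_1 \otimes \chi_2) = I^{\alpha}(s_{\alpha}(\chi_1 \otimes \chi_2))$ and $I^{\beta}(\chi_1 \otimes \chi_2) = I^{\beta}(s_{\beta}(\chi_1 \otimes \chi_2))$ in the Grothendieck groups. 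This produces the four stated formulas for the Langlands quotients $J_{\alpha}(1/2,\delta(\chi))$ and $J_{\beta}(1/2,\delta(\chi))$.

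Finally, for the last composition factor $J_{\beta}(1, \pi(1, \chi))$, I would use either
\[
 I_{\alpha}(1/2, \chi \circ \det) = J_{\beta}(1, \pi(1, \chi)) + J_{\beta}(1/2,\delta(\chi))
\]
or
\[
 I_{\beta}(1/2, \chi \circ \det) = J_{\beta}(1, \pi(1, \chi)) + J_{\alpha}(1/2,\delta(\chi)),
\]
apply $r_{\alpha}$ and $r_{\beta}$ using \refeq{eq:raIaTriv1}, \refeq{eq:raIbTriv1}, \refeq{eq:rbIaTriv1}, \refeq{eq:rbIbTriv1}, and subtract the previously-computed $r_\gamma(J_{\alpha}(1/2,\delta(\chi)))$ or $r_\gamma(J_{\beta}(1/2,\delta(\chi)))$. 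The consistency of the two available computations constitutes an internal check.

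I do not expect a deep conceptual obstacle: the main difficulty is purely bookkeeping, namely to keep track of the $s_{\alpha}$- and $s_{\beta}$-equivalences in $\operatorname{R}(L_{\alpha})$ and $\operatorname{R}(L_{\beta})$ so that common terms cancel correctly when computing differences. The slight subtlety is to ensure that when we say $r_\gamma$ of some composition factor \emph{is} a given formal sum (and not merely contains it), we have enough multiplicity control; this is granted because the total multiplicity in the Jacquet module of the ambient induced representation is known from the corollaries, so subtracting a known summand determines the other summand uniquely.
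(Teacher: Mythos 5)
Your overall scheme is exactly the intended one (the paper itself only says the images are found ``using similar methods''): determine $r_{\beta}(\pi(\chi))$ by the same Frobenius-reciprocity-plus-common-part argument used for $r_{\alpha}(\pi(\chi))$, then obtain the Jacquet modules of $J_{\alpha}(1/2,\delta(\chi))$, $J_{\beta}(1/2,\delta(\chi))$ and $J_{\beta}(1,\pi(1,\chi))$ by exactness of $r_{\gamma}$ and subtraction in the Grothendieck group using the length-two decompositions and Corollaries \ref{cor:ra1}, \ref{cor:rb1}; I have checked that the subtractions reproduce all eight formulas, with the last one matching the stated $I^{\beta}(\nu^{-1}\chi\otimes\chi)$ after applying $s_{\beta}$. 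One intermediate claim, however, is false as written: $I^{\beta}(\chi\otimes\nu)$ and $I^{\beta}(\nu\otimes\chi)$ are \emph{not} equal in $\operatorname{R}(L_{\beta})$ --- the former is reducible of length two (it equals $\nu^{1/2}\delta(\chi)+\nu^{1/2}\chi\circ\det$, since $s_{\beta}(\chi_1\otimes\chi_2)=\chi_1\chi_2\otimes\chi_2^{-1}$ and the reducibility criterion concerns $\chi_2$), while the latter is irreducible, and their cuspidal supports differ. Used literally, your step would only yield $\nu^{1/2}\delta(\chi)\subseteq r_{\beta}(\pi(\chi))$, not the constituent $I^{\beta}(\nu\otimes\chi)$. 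The repair is immediate and is already in the paper: $I(\chi\otimes\nu)\cong I(\nu\otimes\chi)$ as $G_2$-representations (established just before the preceding proposition), so $\pi(\chi)\hookrightarrow I(\nu\otimes\chi)=I_{\beta}\bigl(I^{\beta}(\nu\otimes\chi)\bigr)$ and Frobenius reciprocity with the \emph{irreducible} target $I^{\beta}(\nu\otimes\chi)$ gives the needed containment; alternatively, one can use $r_T(\pi(\chi))=\nu\chi\otimes\chi+\nu\otimes\chi+\chi\otimes\nu$ from part (1) of the previous proposition together with transitivity of Jacquet functors to pin down the second constituent. With that correction your argument is complete.
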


\begin{prop}\label{prop:Duality1}
   We compute the Aubert-Zelevinsky duality of all the irreducible
  representations listed above:
\begin{equation}\label{eq:Duality1}
  \begin{aligned}
    D_{G_2} (\pi (\chi)) = J_{\beta} (1, \pi (1, \chi)), \quad D_{G_2} (J_{\alpha} (1/2, \delta(\chi))) = J_{\beta} (1/2, \delta (\chi))  \ [ \operatorname{R}(G_2)]
  \end{aligned}
\end{equation}
\end{prop}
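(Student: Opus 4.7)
The plan is to follow the template of Propositions \ref{prop:Duality3} and \ref{prop:Duality4}: unfold the definition of $\ND_{G_2}$ as the alternating sum
\[
\ND_{G_2}(\pi) \;=\; I \circ r_{\emptyset}(\pi) \;-\; I_{\alpha}\circ r_{\alpha}(\pi) \;-\; I_{\beta}\circ r_{\beta}(\pi) \;+\; \pi,
\]
insert the Jacquet module data for $\pi(\chi)$ listed in Proposition~\ref{prop:rarbAll1}, and then collapse the sum using the equalities in $\operatorname{R}(G_2)$ established earlier in this subsection.

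Concretely, I would first compute $r_{\emptyset}(\pi(\chi)) = r_T^{L_\alpha}\circ r_{\alpha}(\pi(\chi)) = \nu\chi\otimes\chi + \nu\otimes\chi + \chi\otimes\nu$, using the identity $r_T^{L_\alpha}(\nu^{1/2}\delta(\chi)) = \nu\chi\otimes\chi$ and $r_T^{L_\alpha}(I^{\alpha}(\nu\otimes\chi)) = \nu\otimes\chi + \chi\otimes\nu$. Since $I(\nu\chi\otimes\chi) = I(\nu\otimes\chi) = I(\chi\otimes\nu)$ in $\operatorname{R}(G_2)$, this gives $I\circ r_{\emptyset}(\pi(\chi)) = 3\,I(\chi\otimes\nu)$. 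Next, $I_{\alpha}\circ r_{\alpha}(\pi(\chi)) = I_{\alpha}(1/2,\delta(\chi)) + I(\nu\otimes\chi)$ and $I_{\beta}\circ r_{\beta}(\pi(\chi)) = I_{\beta}(1/2,\delta(\chi)) + I(\nu\otimes\chi)$. Substituting and using the Grothendieck-group equality $I(\chi\otimes\nu) = I_{\alpha}(1/2,\delta(\chi)) + I_{\alpha}(1/2,\chi\circ\det)$ together with the decomposition $I_{\alpha}(1/2,\chi\circ\det) = J_{\beta}(1,\pi(1,\chi)) + J_{\beta}(1/2,\delta(\chi))$ and $I_{\beta}(1/2,\delta(\chi)) = \pi(\chi) + J_{\beta}(1/2,\delta(\chi))$ from G.~Muic's Proposition 4.1, the terms should telescope to $J_{\beta}(1,\pi(1,\chi))$, proving the first equality.

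For the second equality, I would invoke property (2) of $\ND_G$ from Section~\ref{AZduality}: $\ND_{G_2}\circ I_{\alpha} = I_{\alpha}\circ \ND_{L_\alpha}$. On $L_\alpha\cong \GL_2$ the Zelevinsky involution sends $\delta(\chi)$ to $\chi\circ\det$ (this is the rank-one case), so $\ND_{G_2}(I_{\alpha}(1/2,\delta(\chi))) = I_{\alpha}(1/2,\chi\circ\det)$. Applying $\ND_{G_2}$ to the length-two decomposition $I_{\alpha}(1/2,\delta(\chi)) = \pi(\chi) + J_{\alpha}(1/2,\delta(\chi))$ and using the decomposition of $I_{\alpha}(1/2,\chi\circ\det)$ together with the first part of the proposition yields $\ND_{G_2}(J_{\alpha}(1/2,\delta(\chi))) = J_{\beta}(1/2,\delta(\chi))$. (Alternatively one can cite \cite[Lemma 3.1]{Muic1997} directly, since $\chi$ is unitary here.)

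The main obstacle is bookkeeping rather than conceptual: the three principal series $I(\nu\chi\otimes\chi)$, $I(\nu\otimes\chi)$ and $I(\chi\otimes\nu)$ are only equal in $\operatorname{R}(G_2)$ (and one must be careful, as emphasized earlier in the subsection, about which of them are literally isomorphic and which coincide only after semisimplification). One must use the correct Muic-type decompositions on both the $\alpha$-side and the $\beta$-side so that the various $J_{\beta}(1/2,\delta(\chi))$ contributions cancel cleanly. Once this is organised, the computation reduces to a short arithmetic verification in the Grothendieck group.
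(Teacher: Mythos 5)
Your proposal is correct. For the first equality it is essentially the paper's own computation: the paper likewise unfolds $\ND_{G_2}(\pi(\chi)) = I\circ r_T(\pi(\chi)) - I_{\alpha}\circ r_{\alpha}(\pi(\chi)) - I_{\beta}\circ r_{\beta}(\pi(\chi)) + \pi(\chi)$, inserts exactly the Jacquet data of Proposition \ref{prop:rarbAll1}, uses $I(\nu\chi\otimes\chi)=I(\nu\otimes\chi)=I(\chi\otimes\nu)$ in $\operatorname{R}(G_2)$, and then telescopes via $I = I_{\alpha}(1/2,\delta(\chi))+I_{\alpha}(1/2,\chi\circ\det)$ and the Muic-type decompositions, arriving at $J_{\beta}(1,\pi(1,\chi))$ by the same cancellation you describe. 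For the second equality your route differs in detail: you apply property (2) of Section \ref{AZduality} (commutation of $\ND$ with parabolic induction), the rank-one $\GL_2$ duality $\delta(\chi)\mapsto\chi\circ\det$, additivity in the Grothendieck group, and the first part, which is in substance the "deduce it from \cite[Lemma 3.1]{Muic1997}" option that the paper explicitly offers; the paper instead carries out a second full alternating-sum computation using $r_{\alpha}(J_{\alpha}(1/2,\delta(\chi)))$ and $r_{\beta}(J_{\alpha}(1/2,\delta(\chi)))$, mainly as an independent cross-check of the Jacquet-module data in Proposition \ref{prop:rarbAll1}. Your shortcut is valid and cleaner, but it does not re-verify those Jacquet modules; the paper's longer calculation buys that extra consistency check, while yours buys brevity. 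The sign is not an issue here, since in the $\GL_2$ case $\ND(\delta(\chi))=+\,\chi\circ\det$, consistent with the positive signs in the statement.
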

\begin{proof}
  We now compute
  \begin{equation}\label{eq:Duality1proofpichi1}
  \begin{aligned}
  & D_{G_2} (\pi (\chi)) = I \circ r_T^{G_2} (\pi (\chi)) - I_{\alpha} \circ r_{\alpha}(\pi (\chi)) - I_{\beta} \circ r_{\beta} (\pi (\chi)) + \pi (\chi) \\
   &= I( \nu \chi \otimes \chi) + I(\nu \otimes \chi ) +I( \chi \otimes \nu) -  I_{\alpha} ( \nu^{1/2} \delta (\chi)) -I_{\alpha}(I^{\alpha} (\nu \otimes \chi)) \\
   &- I_{\beta}(\nu^{1/2} \delta (\chi)) -I_{\beta}(I^{\beta} (\nu \otimes \chi)) + \pi (\chi) \\
    &=3 I( \nu \chi \otimes \chi)  -  I_{\alpha} ( \nu^{1/2} \delta (\chi)) -I (\nu \otimes \chi)
    - I_{\beta}(\nu^{1/2} \delta (\chi)) -I (\nu \otimes \chi) + \pi (\chi) \\
    & = I( \nu \chi \otimes \chi)  -  I_{\alpha} ( \nu^{1/2} \delta (\chi))- I_{\beta}(\nu^{1/2} \delta (\chi))+ \pi (\chi) \\
    &= I_{\alpha} (1/2, \chi \circ \det) -J_{\beta} (1/2, \delta (\chi)) \\
    &= J_{\beta} (1, \pi (1, \chi)) .
  \end{aligned}
\end{equation}
We can either deduce the rest of this proposition directly from \cite[Lemma 3.1]{Muic1997}
 or use a similar calculation:
 \begin{equation}\label{eq:Duality1proofJa1}
  \begin{aligned}
    & D_{G_2} (J_{\alpha}(1/2, \delta (\chi))) = I \circ r_{\phi} (J_{\alpha}(1/2, \delta (\chi))) - I_{\alpha} \circ r_{\alpha} (J_{\alpha}(1/2, \delta (\chi))) \\
    &- I_{\beta} \circ r_{\beta} (J_{\alpha}(1/2, \delta (\chi))) + J_{\alpha}(1/2, \delta (\chi))\\
   &= I \circ r_T^{L_{\alpha}} \circ \left( \nu^{-1/2} \delta (\chi) + I^{\alpha} (\nu \chi \otimes \nu^{-1}) \right) - I_{\alpha} \circ  \left( \nu^{-1/2} \delta (\chi) + I^{\alpha} (\nu \chi \otimes \nu^{-1}) \right)\\
   & - I_{\beta} \circ \left( \nu^{1/2} \chi \circ \det + I^{\beta} (\nu^{-1} \otimes \nu \chi) \right) + J_{\alpha}(1/2, \delta (\chi))\\
    &= I(\chi \otimes \nu^{-1} \chi) + I (\nu \chi \otimes \nu^{-1}) +I(\nu^{-1} \otimes \nu \chi  ) - I_{\alpha} (\nu^{-1/2} \delta (\chi)) - I (\nu \chi \otimes \nu^{-1})\\
    & - I_{\beta} (\nu^{1/2} \chi \circ \det ) - I (\nu^{-1} \otimes \nu \chi) + J_{\alpha}(1/2, \delta (\chi)) \\
    &= I(\chi \otimes \nu^{-1} \chi) + I (\nu \chi \otimes \nu^{-1}) +I(\nu^{-1} \otimes \nu \chi  ) - I_{\alpha} (\nu^{-1/2} \delta (\chi)) - I (\nu \chi \otimes \nu^{-1})\\
    & - I_{\beta} (\nu^{1/2} \chi \circ \det ) - I (\nu^{-1} \otimes \nu \chi) + J_{\alpha}(1/2, \delta (\chi)) \\
    &= I(\chi \otimes \nu^{-1} \chi) - I_{\alpha} (\nu^{-1/2} \delta (\chi)) - I_{\beta} (\nu^{1/2} \chi \circ \det )+ J_{\alpha}(1/2, \delta (\chi))\\
    & = I_{\alpha} (1/2, \chi \circ \det) -I_{\beta} (1/2, \chi \circ \det) +J_{\alpha} (1/2, \delta (\chi)) \\
    & = J_{\beta} (1,\pi (1, \chi)) +J_{\beta}(1/2, \delta (\chi)) -\left( J_{\beta} (1, \pi (1, \chi)) + J_{\alpha} (1/2, \delta (\chi)) \right) +J_{\alpha} (1/2, \delta (\chi)) \\
   & = J_{\beta} (1/2, \delta (\chi)).
  \end{aligned}
\end{equation}
In the steps of above computations, we need to use
\cite[Lemma 5.4 (iii)]{BernsteinDeligneKazhdan1986}  to get
\[
  I_{\alpha}
  (\nu^{-1/2} \delta (\chi)) =I_{\alpha} (\nu^{1/2} \delta (\chi))  \]
in $\operatorname{R}(G_2)$, and \cite[Geometric Lemma]{BernsteinZelevinsky1977}
for computing $ r_T^{L_{\alpha}}  I^{\alpha} (\nu \chi \otimes \nu^{-1})$.
\end{proof}

\subsubsection{Case \refeq{Muic1997itm:3}  within the case $\mathfrak{s}=[T,  \xi \otimes \xi]_G$:
  $\chi$ ramified quadratic}

If $\xi_2$ is ramified quadratic, then by the discussion in
\cite[Section 9.3.2]{AubertXu2023explicit}, we have $\mathfrak{J}^{\mathfrak{s}}
=\SO_4(\C)$ and $\H^{\mathfrak{s}} = \H
(\SO_4(F), 1)$, we know that $\SO_4 (\C) \cong \SL_2 (\C) \times \SL_2 (\C) /
\left\{  \pm 1 \right\}$. Using \cite[section 3 table 2.1]{Ram2004Rank2}, we
can index the modules by two triples of type $A_1$:

We get the proposition for modules of Hecke algebra:

\begin{prop}
   Using the indexing triples to denote the standard modules, we have:
  \begin{equation}
    \begin{aligned}
      D_{\H^{\mathfrak{s}}} (M_{(t_a, e_{\alpha_1}, 1), (t_a, e_{\alpha_1}, 1)}) &= M_{(t_a, 0, 1), (t_a, 0, 1)}, \\
      D_{\H^{\mathfrak{s}}} (M_{(t_a, 0, 1), (t_a, e_{\alpha_1}, 1)}) &= M_{(t_a, e_{\alpha_1}, 1), (t_a, 0, 1)} .
    \end{aligned}
  \end{equation}
\end{prop}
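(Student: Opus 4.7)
The plan is to transfer the Aubert--Zelevinsky duality identities already obtained on the $G_2$ side in Proposition \ref{prop:Duality1} across the equivalence of categories $\Rep^{\mathfrak{s}}(G_2) \simeq \H^{\mathfrak{s}}\smod$ recalled in Section \ref{subsec:Principalblocks}. The crucial structural input in the present case is that $\mathfrak{J}^{\mathfrak{s}} = \SO_4(\C) \cong (\SL_2(\C) \times \SL_2(\C))/\{\pm 1\}$, so that $\H^{\mathfrak{s}} \cong \H(\SO_4(F),1)$ is (up to the central identification) an external tensor product of two rank-one affine Hecke algebras with equal parameters. Accordingly, its standard Kazhdan--Lusztig modules are external tensor products of two type $A_1$ standard modules, which is exactly what the pair indexing $M_{(\cdot,\cdot,\cdot),(\cdot,\cdot,\cdot)}$ in the statement records.

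First I would use \cite[Section 9.3.3]{AubertXu2023explicit} combined with \cite[Table 2.1]{Ram2004Rank2} applied to each $\SL_2(\C)$ factor to match the four irreducible constituents of $I(\nu\chi\otimes\chi)$ with the four standard modules, namely
\[
\pi(\chi)\leftrightarrow M_{(t_a,e_{\alpha_1},1),(t_a,e_{\alpha_1},1)},\qquad J_{\beta}(1,\pi(1,\chi))\leftrightarrow M_{(t_a,0,1),(t_a,0,1)},
\]
and the pair $\{J_{\alpha}(1/2,\delta(\chi)),\,J_{\beta}(1/2,\delta(\chi))\}$ matched with the two asymmetric modules $M_{(t_a,0,1),(t_a,e_{\alpha_1},1)}$ and $M_{(t_a,e_{\alpha_1},1),(t_a,0,1)}$ in some order. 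Next I would observe that by Theorem \ref{thm:KatoEn}, applied factor by factor, the $*$-operation (hence $D_{\H^{\mathfrak{s}}}$) respects the tensor decomposition, so $D_{\H^{\mathfrak{s}}}$ is the external tensor product of the two rank-one dualities; each of these swaps $M_{t_a,e_{\alpha_1},1}$ with $M_{t_a,0,1}$, as already established in Section \ref{subsec:casebycase} and \cite[Table 11]{AubertXu2023explicit}. Swapping componentwise immediately gives both claimed identities.

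As a cross-check, the result must be compatible with Proposition \ref{prop:Duality1}: the identity $D_{G_2}(\pi(\chi)) = J_{\beta}(1,\pi(1,\chi))$ transfers to the first equality, and $D_{G_2}(J_{\alpha}(1/2,\delta(\chi))) = J_{\beta}(1/2,\delta(\chi))$ transfers to the second. The main obstacle is precisely fixing the labelling: the two asymmetric modules differ only by interchanging the two $\SL_2$ factors, and one has to decide which factor corresponds to $J_{\alpha}(1/2,\delta(\chi))$ and which to $J_{\beta}(1/2,\delta(\chi))$. This amounts to unpacking the explicit isogeny $\SL_2(\C)\times\SL_2(\C)\to\SO_4(\C)$ and tracing how the two factors embed into $\widehat{G_2}$ via $\varphi_{\chi}$, in particular which factor corresponds to the short-coroot and which to the long-coroot direction. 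Once this identification is fixed consistently with \cite{AubertXu2023explicit}, the second identity follows because the Hecke-algebra duality is symmetric in the two factors while the $G_2$-side pairing $J_{\alpha}\leftrightarrow J_{\beta}$ is precisely the swap of the short and long roots.
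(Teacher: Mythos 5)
Your proposal is correct and takes essentially the same route as the paper: the paper likewise uses $\SO_4(\C)\cong \left(\SL_2(\C)\times\SL_2(\C)\right)/\{\pm 1\}$ to index the standard modules by pairs of type $A_1$ triples via Ram's Table 2.1, and then reads off the involution componentwise (each factor swapping $e_{\alpha_1}$ with $0$), in agreement with Proposition \ref{prop:Duality1} under the Bernstein-block equivalence. Your added remarks --- that Theorem \ref{thm:KatoEn} applies factor by factor, and that the second identity is insensitive to which asymmetric module is matched with $J_{\alpha}(1/2,\delta(\chi))$ versus $J_{\beta}(1/2,\delta(\chi))$ --- only make explicit what the paper leaves implicit.
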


\subsubsection{Case \refeq{Muic1997itm:3}  within the case $\mathfrak{s}=[T,  1]_G$: $\chi$ unramified
quadratic}
If $\xi_2$ is unramified quadratic,  $\mathfrak{J}^{\mathfrak{s}}
=G_2(\C)$, and $\H^{\mathfrak{s}} = \H
(G_2(F), 1)$. Then by \cite[table
16]{AubertXu2023explicit}, we get the proposition for modules of Hecke algebra:

\begin{prop}
   Using the indexing triples to denote the standard modules, we have:
  \begin{equation}
    \begin{aligned}
      D_{\H^{\mathfrak{s}}} (M_{t_d, e_{\alpha^{\vee}}+e_{\alpha^{\vee} + 2 \beta^{\vee}}, 1}) &= M_{t_d, 0,1}, \\
      D_{\H^{\mathfrak{s}}} (M_{t_d, e_{\alpha^{\vee}}, 1}) &= M_{t_d, e_{2\beta^{\vee}+\alpha^{\vee}}, 1}  .
    \end{aligned}
  \end{equation}
\end{prop}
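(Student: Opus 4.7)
The plan is to deduce this proposition by transferring the representation-theoretic duality already established in Proposition \ref{prop:Duality1} across the equivalence of categories $\Rep^{\mathfrak{s}}(G_2) \cong \H^{\mathfrak{s}}\smod$ supplied by Roche's theory. First I would observe that the proof of Proposition \ref{prop:Duality1} (with the preparatory Jacquet module computations of Corollaries \ref{cor:ra1} and \ref{cor:rb1}, together with the Geometric Lemma and \cite[Lemma 5.4(iii)]{BernsteinDeligneKazhdan1986}) used only the fact that $\chi^2 = 1$ and never invoked any ramification hypothesis. Consequently the identities
\[
D_{G_2}(\pi(\chi)) = J_{\beta}(1,\pi(1,\chi)), \qquad D_{G_2}(J_{\alpha}(1/2,\delta(\chi))) = J_{\beta}(1/2,\delta(\chi))
\]
remain valid in the present setting where $\chi$ is unramified quadratic, with no further representation-theoretic work required.

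Next I would invoke the Roche setup in Section \ref{subsec:Principalblocks}. For $\mathfrak{s}=[T,\widetilde{\chi}]_G$ with $\chi$ unramified quadratic, the centralizer of $\varphi_{\chi}$ in $\widehat{G}$ is $\mathfrak{J}^{\mathfrak{s}} = G_2(\C)$, so $\H^{\mathfrak{s}} \cong \H(G_2(F),1_J)$ via a $\kappa$-preserving, support-preserving isomorphism, and the equivalence of categories $\Rep^{\mathfrak{s}}(G_2) \cong \H^{\mathfrak{s}}\smod$ intertwines $\ND_{G_2}$ with the Kato twist $D$ of Theorem \ref{thm:KatoEn}. Thus the identities above translate directly into equalities in the Grothendieck group of finite-dimensional $\H^{\mathfrak{s}}$-modules, with each irreducible representation replaced by its corresponding standard module.

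The final step is the identification of each of the four irreducibles $\pi(\chi)$, $J_{\alpha}(1/2,\delta(\chi))$, $J_{\beta}(1,\pi(1,\chi))$, $J_{\beta}(1/2,\delta(\chi))$ with a specific indexing triple. I would read these off from \cite[Table 16]{AubertXu2023explicit} combined with \cite[Table 6.1]{Ram2004Rank2}: the semisimple parameter is $t_d$ in all four cases, while the nilpotent/component-group data are $(e_{\alpha^{\vee}} + e_{\alpha^{\vee}+2\beta^{\vee}}, 1)$, $(e_{\alpha^{\vee}},1)$, $(0,1)$, and $(e_{2\beta^{\vee}+\alpha^{\vee}},1)$ respectively. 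Substituting into the translated identities yields exactly the two formulas of the proposition.

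The main obstacle is the bookkeeping in the third step, in particular verifying that the Aubert-Xu dictionary between composition factors of the principal series and Kazhdan-Lusztig standard modules matches our own labeling of $\pi(\chi)$ as the generic subrepresentation (rather than the Langlands quotient) and correctly pairs the two $J_\alpha$/$J_\beta$ quotients with the nilpotent cosets $e_{\alpha^{\vee}}$ and $e_{\alpha^{\vee}+2\beta^{\vee}}$ in the Bala-Carter convention used for $G_2(\C)$. Once this matching is confirmed, the rest of the argument is a direct transcription.
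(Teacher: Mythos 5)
Your proposal is correct and follows essentially the same route as the paper: the paper also obtains this case by applying the duality identities $D_{G_2}(\pi(\chi)) = J_{\beta}(1,\pi(1,\chi))$ and $D_{G_2}(J_{\alpha}(1/2,\delta(\chi))) = J_{\beta}(1/2,\delta(\chi))$ from Proposition \ref{prop:Duality1} (valid for any quadratic $\chi$, ramified or not) and then translating them into standard-module language via the dictionary of \cite[Table 16]{AubertXu2023explicit} together with Ram's tables, with $\mathfrak{J}^{\mathfrak{s}} = G_2(\C)$ and $\H^{\mathfrak{s}} = \H(G_2(F),1)$. Your matching of the four composition factors with the triples $(t_d, e_{\alpha^{\vee}}+e_{\alpha^{\vee}+2\beta^{\vee}},1)$, $(t_d, e_{\alpha^{\vee}},1)$, $(t_d,0,1)$, $(t_d, e_{2\beta^{\vee}+\alpha^{\vee}},1)$ agrees with the paper's.
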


\subsection{Case \refeq{Muic1997itm:3} computations for $s=1/2$ and $\chi^3=1$, $\chi \neq 1$}

If $s=1/2$ and $\chi^3=1$, then by \cite[Theorem 3.1]{Muic1997}, $I_{\alpha} (s,
\delta (\chi))$ and $I_{\alpha} (s, \chi \circ \det)$ reduce, while $I_{\beta} (s,
\delta (\chi))$ or $I_{\beta} (s, \chi \circ \det)$ do not.

If $s=1/2$ and  $\chi^3=1$, then
\begin{equation*}
  \begin{aligned}
 I(\nu \chi \otimes \chi) =
I (\nu \chi^{-1} \otimes  \nu^{-1} \chi^{-1})  & = I (\chi \otimes \chi \nu^{-1})=
I(\nu^{-1} \chi^{-1} \otimes \chi^{-1})  \\
  &   = I (\nu \chi^{-1} \otimes \chi^{-1})  =
I(\nu \chi \otimes \nu^{-1} \chi)  \ [\operatorname{R}(G_2)]
  \end{aligned}
\end{equation*}
(we omit the terms differ by $s_{\alpha}$). We deduce
\begin{equation*}
  \begin{aligned}
I_{\alpha} (\nu^{-1/2} \chi^{-1} \circ \det) + I_{\alpha}
  (\nu^{-1/2} \delta (\chi^{-1})) & =I_{\alpha} (\nu^{1/2} \chi^{-1} \circ \det) + I_{\alpha}
  (\nu^{1/2} \delta (\chi^{-1}))  \\
  &   =I_{\alpha} (\nu^{1/2} \chi \circ \det) + I_{\alpha}
  (\nu^{1/2} \delta (\chi)) \ [\operatorname{R}(G_2)]
  \end{aligned}
\end{equation*}
from
\[
  I(\nu^{-1} \chi^{-1} \otimes \chi^{-1}) =
  I(\nu \chi^{-1} \otimes \chi^{-1}) = I(\nu \chi \otimes \chi) \
  [\operatorname{R}(G_2)]
\]
respectively.

Moreover, since
\[
  I^{\beta} (\nu \chi  \otimes \chi) = I^{\beta} \left(s_{\beta} \circ
(\nu \chi
\otimes \chi) \right) =I^{\beta} (\nu \chi^{-1} \otimes \chi^{-1}) \
[\operatorname{R}(L_{\beta})]
\]
and they are
irreducible, they are thus isomorphic as representations. We have
\[
  I(\nu \chi \otimes
\chi) = I_{\beta}(I^{\beta} (\nu \chi \otimes \chi)) \cong I_{\beta} \left(I^{\beta} (\nu
  \chi^{-1} \otimes \chi^{-1}) \right) =I(\nu \chi^{-1} \otimes \chi^{-1})
\]
as representations.

\begin{cor}\label{cor:ra2}
  If we take $\chi^3=1$, $s=1/2$ in  Lemma \ref{lem:ra}, we have:
 \begin{equation}\label{eq:raIaSt2}
    \begin{aligned}
      r_{\alpha} \left( I_{\alpha} \left( 1/2, \delta \left(\chi \right) \right) \right) & = \nu^{1/2} \delta (\chi) +\nu^{-1/2} \delta (\chi^{-1}) + I^{\alpha} (\nu \chi^{-1} \otimes \chi^{-1})+I^{\alpha} (\nu \chi \otimes \nu^{-1} \chi) \\
      &=  \nu^{1/2} \delta (\chi) +\nu^{-1/2} \delta (\chi^{-1}) + \nu^{1/2} \chi^{-1} \circ \det + \nu^{1/2} \delta (\chi^{-1}) \\ & +I^{\alpha} (\nu \chi \otimes \nu^{-1} \chi),
       \end{aligned}
\end{equation}
\begin{equation}\label{eq:raIaTriv2}
   \begin{aligned}
     r_{\alpha} \left( I_{\alpha}(1/2, \chi \circ \det) \right) &= \nu^{1/2}  \chi \circ \det +\nu^{-1/2}\chi^{-1} \circ \det + I^{\alpha} ( \chi \otimes \nu^{-1} \chi)+I^{\alpha} (\nu \chi^{-1}  \otimes \nu^{-1} \chi^{-1})\\
                                                                &=  \nu^{1/2}  \chi \circ \det +\nu^{-1/2}\chi^{-1} \circ \det + \nu^{-1/2} \delta (\chi) + \nu^{-1/2} \chi \circ \det \\
     &+I^{\alpha} (\nu \chi^{-1}  \otimes \nu^{-1} \chi^{-1}).
     \end{aligned}
\end{equation}
If we replace $\chi$ by $\chi^{-1}$, then we have:
\begin{equation}\label{eq:raIaSt22}
    \begin{aligned}
      r_{\alpha} \left( I_{\alpha} \left( 1/2, \delta (\chi^{-1}) \right) \right) & =  \nu^{1/2} \delta (\chi^{-1}) +\nu^{-1/2} \delta (\chi) + \nu^{1/2} \chi \circ \det + \nu^{1/2} \delta (\chi) \\
      &+I^{\alpha} (\nu \chi^{-1} \otimes \nu^{-1} \chi^{-1}),
       \end{aligned}
\end{equation}
\begin{equation}\label{eq:raIaTriv22}
   \begin{aligned}
     r_{\alpha} \left( I_{\alpha} \left( 1/2, \chi^{-1} \circ \det \right) \right) &=  \nu^{1/2}  \chi^{-1} \circ \det +\nu^{-1/2}\chi \circ \det + \nu^{-1/2} \delta (\chi^{-1}) + \nu^{-1/2} \chi^{-1} \circ \det \\
     &+I^{\alpha} (\nu \chi  \otimes \nu^{-1} \chi) .
     \end{aligned}
\end{equation}
\end{cor}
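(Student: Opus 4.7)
The plan is to obtain Corollary~\ref{cor:ra2} as a direct specialization of Lemma~\ref{lem:ra} at $s=1/2$ combined with the $\GL_2$ reducibility criterion of Proposition~\ref{prop:Mgamma}. The relation $\chi^3=1$ is used only via the identities $\chi^2=\chi^{-1}$ and $\chi^{-2}=\chi$, and the symmetry $\chi\leftrightarrow\chi^{-1}$ that gives the last two formulas for free.

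First I would substitute $s=1/2$ into the first formula of Lemma~\ref{lem:ra} and apply $\chi^2=\chi^{-1}$, $\chi^{-2}=\chi$. This rewrites
\[
r_\alpha\bigl(I_\alpha(1/2,\delta(\chi))\bigr)
= \nu^{1/2}\delta(\chi)+\nu^{-1/2}\delta(\chi^{-1})
+I^\alpha(\nu\chi^{-1}\otimes\chi^{-1})
+I^\alpha(\nu\chi\otimes\nu^{-1}\chi),
\]
which is exactly the first displayed line of \eqref{eq:raIaSt2}. I then test each $I^\alpha$-summand against the reducibility condition $(\chi_1\otimes\chi_2)\circ\alpha^\vee=\nu^{\pm 1}$ from Proposition~\ref{prop:Mgamma}(1): the ratio for $I^\alpha(\nu\chi^{-1}\otimes\chi^{-1})$ is $\nu$, so it reduces; for $I^\alpha(\nu\chi\otimes\nu^{-1}\chi)$ the ratio is $\nu^2$, so it remains irreducible. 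Writing $\nu\chi^{-1}\otimes\chi^{-1} = \nu^{1/2}\chi'\otimes\nu^{-1/2}\chi'$ with $\chi'=\nu^{1/2}\chi^{-1}$, Proposition~\ref{prop:Mgamma}(2) yields the composition factors $\nu^{1/2}\delta(\chi^{-1})$ and $\nu^{1/2}\chi^{-1}\circ\det$. Plugging these in gives the second line of \eqref{eq:raIaSt2}.

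Second, I apply the same procedure to the second formula of Lemma~\ref{lem:ra}:
\[
r_\alpha\bigl(I_\alpha(1/2,\chi\circ\det)\bigr)
= \nu^{1/2}\chi\circ\det+\nu^{-1/2}\chi^{-1}\circ\det
+I^\alpha(\chi\otimes\nu^{-1}\chi)
+I^\alpha(\nu\chi^{-1}\otimes\nu^{-1}\chi^{-1}).
\]
The first of the two $I^\alpha$-terms has ratio $\nu$ and decomposes via Proposition~\ref{prop:Mgamma}(2) (with $\chi'=\nu^{-1/2}\chi$) into $\nu^{-1/2}\delta(\chi)+\nu^{-1/2}\chi\circ\det$; the second has ratio $\nu^2$ and stays irreducible. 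This yields \eqref{eq:raIaTriv2}.

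Finally, equations \eqref{eq:raIaSt22} and \eqref{eq:raIaTriv22} follow by substituting $\chi\mapsto\chi^{-1}$ in \eqref{eq:raIaSt2} and \eqref{eq:raIaTriv2}, which is legitimate since $\chi^3=1$ is preserved under inversion. I do not expect any real obstacle: the argument is purely mechanical substitution plus a single application of the $\GL_2$ reducibility criterion, and the only things to watch are the bookkeeping of character twists and the conventions linking $\nu^{s}\delta(\chi)$ (resp.\ $\nu^{s}\chi\circ\det$) with the subrepresentation (resp.\ quotient) of $I^\alpha(\nu^{s+1/2}\chi\otimes\nu^{s-1/2}\chi)$ recorded in Proposition~\ref{prop:Mgamma}(2).
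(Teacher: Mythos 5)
Your proposal is correct and matches how the paper obtains this corollary: it is the direct specialization of Lemma \ref{lem:ra} at $s=1/2$ using $\chi^2=\chi^{-1}$, $\chi^{-2}=\chi$, followed by decomposing the two reducible $I^{\alpha}$-terms (ratio $\nu$) via Proposition \ref{prop:Mgamma}(2) into $\nu^{\pm 1/2}\delta(\cdot)+\nu^{\pm 1/2}(\cdot)\circ\det$, the ratio-$\nu^{2}$ terms being left intact, and the last two formulas following by the substitution $\chi\mapsto\chi^{-1}$.
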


\begin{cor}\label{cor:rb2}
   If we take $\chi^3=1$, $s=1/2$ in \refeq{eq:rbIaSt} and \refeq{eq:rbIaTriv} of Lemma \ref{lem:rb}, we have:
 \begin{equation}\label{eq:rbIaSt2}
  r_{\beta} (I_{\alpha}(1/2, \delta (\chi))) = I^{\beta} (\nu \chi \otimes  \chi) + I^{\beta} (\nu^{-1} \chi \otimes \nu \chi) + I^{\beta} ( \chi^{-1} \otimes \nu \chi^{-1}),
\end{equation}
\begin{equation}\label{eq:rbIaTriv2}
  r_{\beta} \left( I_{\alpha}(1/2, \chi \circ \det) \right) = I^{\beta} ( \chi \otimes \nu \chi) +I^{\beta}(\nu^{-1} \chi \otimes  \chi) + I^{\beta} (\nu^{-1} \chi^{-1} \otimes \nu \chi^{-1}) .
\end{equation}
If we replace $\chi$ by $\chi^{-1}$, then:
 \begin{equation}\label{eq:rbIaSt22}
  r_{\beta} \left( I_{\alpha} \left( 1/2, \delta (\chi^{-1}) \right) \right) = I^{\beta} (\nu \chi \otimes  \chi) + I^{\beta} (\nu^{-1} \chi^{-1} \otimes \nu \chi^{-1}) + I^{\beta} ( \chi \otimes \nu \chi),
\end{equation}
\begin{equation}\label{eq:rbIaTriv22}
  r_{\beta} \left( I_{\alpha}\left( 1/2, \chi^{-1} \circ \det \right) \right) = I^{\beta} ( \chi^{-1} \otimes \nu \chi^{-1}) +I^{\beta}(\nu^{-1} \chi \otimes \chi) + I^{\beta} (\nu^{-1} \chi \otimes \nu \chi) .
\end{equation}

In equation \refeq{eq:rbIaSt22} and \refeq{eq:rbIaTriv22} we used $I^{\beta} \left(s_{\beta} (\chi_1
  \otimes \chi_2) \right) = I^{\beta} (\chi_1 \otimes \chi_2)$ to the first term
and second term respectively.
\end{cor}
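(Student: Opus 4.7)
The plan is to apply Lemma \ref{lem:rb} directly, specializing the parameters to the case at hand. Starting from the formula
\[
r_\beta(I_\alpha(s, \delta(\chi))) = I^\beta(\nu^{s+1/2}\chi \otimes \nu^{s-1/2}\chi) + I^\beta(\nu^{-2s}\chi^{-2} \otimes \nu^{s+1/2}\chi) + I^\beta(\nu^{-s+1/2}\chi^{-1} \otimes \nu^{2s}\chi^2),
\]
I would substitute $s=1/2$ and use the relations $\chi^{-2}=\chi$ and $\chi^2=\chi^{-1}$ coming from $\chi^3=1$. The $\nu$-exponents simplify via $\nu^{s\pm 1/2}\in\{\nu,1\}$, $\nu^{\pm 2s}=\nu^{\pm 1}$, $\nu^{-s+1/2}=1$. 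This yields exactly \refeq{eq:rbIaSt2}. The same direct substitution applied to the fourth formula of Lemma \ref{lem:rb} gives \refeq{eq:rbIaTriv2}; here the middle term comes out as $I^\beta(\nu^{-1}\chi^{-2}\otimes\chi)=I^\beta(\nu^{-1}\chi\otimes\chi)$.

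For the replacement versions \refeq{eq:rbIaSt22} and \refeq{eq:rbIaTriv22}, I would first perform the naive substitution $\chi\mapsto\chi^{-1}$ in \refeq{eq:rbIaSt2} and \refeq{eq:rbIaTriv2}. This produces three terms in each equation, but one of them does not literally match the form stated in the corollary. At that point I invoke \cite[Lemma 5.4 (iii)]{BernsteinDeligneKazhdan1986}, namely $I^\beta(\chi_1\otimes\chi_2)=I^\beta(s_\beta(\chi_1\otimes\chi_2))$ in $\operatorname{R}(L_\beta)$, to rewrite the offending term: in \refeq{eq:rbIaSt22} one applies $s_\beta$ to the first term $I^\beta(\nu\chi^{-1}\otimes\chi^{-1})$ to turn it into $I^\beta(\nu\chi\otimes\chi)$, and in \refeq{eq:rbIaTriv22} one applies $s_\beta$ to the second term. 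The point of this rewriting is purely cosmetic: it brings the list of summands into a form that lines up with the summands of \refeq{eq:rbIaSt2} and \refeq{eq:rbIaTriv2} in order to facilitate comparison of common composition factors in the subsequent case analysis.

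The proof is therefore a routine calculation, and the only possible obstacle is bookkeeping: one must be careful with the identification of characters of $T_\beta$ under \refeq{eq:Characterab} when computing the $s_\beta$-action, and with the sign of the exponents of $\nu$ after substitution. There is no independent reducibility or adjunction argument needed at this stage; all non-trivial input is contained in Lemma \ref{lem:rb} (the Geometric Lemma of \cite{BernsteinZelevinsky1977}) and the Weyl invariance in the Grothendieck group of $L_\beta$.
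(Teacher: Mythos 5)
Your proposal is correct and matches the paper's (implicit) argument exactly: the paper treats this as a direct specialization of Lemma \ref{lem:rb} at $s=1/2$ with $\chi^3=1$, using $\chi^{-2}=\chi$, $\chi^{2}=\chi^{-1}$, followed by the $\chi\mapsto\chi^{-1}$ substitution and the rewriting $I^{\beta}(s_{\beta}(\chi_1\otimes\chi_2))=I^{\beta}(\chi_1\otimes\chi_2)$ applied to the first term of \refeq{eq:rbIaSt22} and the second term of \refeq{eq:rbIaTriv22}. Your identification of the $s_\beta$-action (e.g. $s_\beta(\nu\chi^{-1}\otimes\chi^{-1})=\nu\chi\otimes\chi$ and $s_\beta(\nu^{-1}\chi^{-1}\otimes\chi^{-1})=\nu^{-1}\chi\otimes\chi$ when $\chi^3=1$) is the same bookkeeping the paper relies on, so nothing is missing.
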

\begin{prop}[G. Muic Proposition 4.2]\label{prop:Muic4.2}
  If a character $\chi$ satisfies $\chi^3=1$, then we have the following
  \begin{enumerate}
  \item The induced representation $I(\nu \chi \otimes \chi)$ has a unique
    subrepresentation $\pi (\chi)$. We have
    \[
      r_{\emptyset} (\pi (\chi)) = \nu
      \chi \otimes \chi + \nu \chi^{-1} \otimes \chi^{-1},
    \]
    and $\pi (\chi) \cong
    \pi (\chi^{-1})$.
  \item In $\operatorname{R}(G_2)$, we have:
 \begin{equation}\label{eq:Ia2}
   \begin{aligned}
     I_{\alpha} \left( 1/2, \delta (\chi) \right) &= \pi (\chi) +J_{\alpha} \left( 1/2, \delta (\chi) \right), \\
     I_{\alpha}  \left( 1/2, \chi \circ \det \right) & = J_{\beta} \left( 1, \pi (\chi^{-1}, \chi^{-1}) \right) +J_{\alpha} \left( 1/2, \delta (\chi^{-1}) \right), \\
 I_{\alpha} \left( 1/2, \delta (\chi^{-1}) \right) &= \pi (\chi) +J_{\alpha}(1/2, \delta (\chi^{-1})), \\
     I_{\alpha} \left( 1/2, \chi^{-1} \circ \det \right) & = J_{\beta} \left(1, \pi (\chi^{-1}, \chi^{-1} ) \right) +J_{\alpha} \left(1/2, \delta (\chi)\right)  .
   \end{aligned}
 \end{equation}
  \end{enumerate}
\end{prop}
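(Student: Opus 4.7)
The plan is to mirror the argument for the order-two case carried out in the preceding proposition, adapted to the complication that here $\chi \neq \chi^{-1}$.

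First, Rodier's theorem \cite{Rodier1981} implies that $I(\nu\chi\otimes\chi)$ has length $4$ and is multiplicity free, so it contains a unique irreducible subrepresentation, which I name $\pi(\chi)$. Since $\chi^3=1$ and $\chi\neq 1$, by \cite[Theorem 3.1]{Muic1997} only the $I_\alpha$ parabolic inductions reduce. Thus $I^\beta(\nu\chi\otimes\chi)$ is irreducible, and the identity $I^\beta(\nu\chi\otimes\chi) = I^\beta(s_\beta(\nu\chi\otimes\chi)) = I^\beta(\nu\chi^{-1}\otimes\chi^{-1})$ in $\operatorname{R}(L_\beta)$ promotes to an isomorphism of representations. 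Parabolic induction yields $I(\nu\chi\otimes\chi)\cong I(\nu\chi^{-1}\otimes\chi^{-1})$ as $G_2$-representations, so both $I_\alpha(1/2,\delta(\chi))$ and $I_\alpha(1/2,\delta(\chi^{-1}))$ sit as subrepresentations of this common induced module. By uniqueness of the subrepresentation they must share $\pi(\chi)$, which in particular gives $\pi(\chi)\cong\pi(\chi^{-1})$.

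Next, I determine $r_\emptyset\pi(\chi)$. The two embeddings above and the adjointness of $(r_\alpha, I_\alpha)$ force $r_\alpha\pi(\chi)$ to contain both $\nu^{1/2}\delta(\chi)$ and $\nu^{1/2}\delta(\chi^{-1})$. The Jacquet module formulas \refeq{eq:raIaSt2} and \refeq{eq:raIaSt22} show that these are the only irreducible constituents common to $r_\alpha I_\alpha(1/2,\delta(\chi))$ and $r_\alpha I_\alpha(1/2,\delta(\chi^{-1}))$ compatible with a shared subrepresentation, whence $r_\alpha\pi(\chi)=\nu^{1/2}\delta(\chi)+\nu^{1/2}\delta(\chi^{-1})$. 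Composing with $r_T^{L_\alpha}$ produces $r_\emptyset\pi(\chi)=\nu\chi\otimes\chi+\nu\chi^{-1}\otimes\chi^{-1}$, completing part (1).

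For part (2), the Langlands quotient theorem supplies $J_\alpha(1/2,\delta(\chi^{\pm 1}))$ as the unique irreducible quotients of $I_\alpha(1/2,\delta(\chi^{\pm 1}))$, and rewriting $I(\nu\chi^{-1}\otimes\chi^{-1}) = I_\beta(1, I^\beta(1\otimes\chi^{-1}))$ produces $J_\beta(1,\pi(\chi^{-1},\chi^{-1}))$ as a further composition factor. Together with $\pi(\chi)$ these are four distinct irreducibles which, by Rodier's length count, exhaust the composition factors of $I(\nu\chi\otimes\chi)$. The placements in \refeq{eq:Ia2} then follow by Jacquet module bookkeeping: $\pi(\chi)$ is the subrepresentation of both $I_\alpha(1/2,\delta(\chi^{\pm 1}))$, while comparing $r_\alpha\pi(\chi)$ with \refeq{eq:raIaTriv2} and \refeq{eq:raIaTriv22} shows $\pi(\chi)$ cannot be a subquotient of $I_\alpha(1/2,\chi^{\pm 1}\circ\det)$, forcing the advertised decompositions. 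The principal obstacle lies in this final paragraph: since $\chi\neq\chi^{-1}$, four related inductions must be tracked simultaneously, and verifying that $J_\beta(1,\pi(\chi^{-1},\chi^{-1}))$ appears in $I_\alpha(1/2,\chi\circ\det)$ and $I_\alpha(1/2,\chi^{-1}\circ\det)$ but in neither $I_\alpha(1/2,\delta(\chi^{\pm 1}))$ requires careful use of the Jacquet module formulas in Corollaries \ref{cor:ra2} and \ref{cor:rb2} together with the Grothendieck-group identity relating the four sums.
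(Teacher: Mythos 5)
Your route is essentially the paper's: Rodier's theorem gives length $4$ and multiplicity one, the isomorphism $I(\nu\chi\otimes\chi)\cong I(\nu\chi^{-1}\otimes\chi^{-1})$ comes from the irreducibility of $I^{\beta}(\nu\chi\otimes\chi)$, adjunction pins down $r_{\alpha}(\pi(\chi))=\nu^{1/2}\delta(\chi)+\nu^{1/2}\delta(\chi^{-1})$, and the factors are then sorted by Langlands quotients and the Jacquet-module formulas. However, one step is false as written: the identity $I(\nu\chi^{-1}\otimes\chi^{-1})=I_{\beta}\left(1, I^{\beta}(1\otimes\chi^{-1})\right)$. With the conventions of this paper ($\nu^{s}\otimes I^{\beta}(\chi_1\otimes\chi_2)=I^{\beta}(\chi_1\nu^{s}\otimes\chi_2)$), the right-hand side equals $I(\nu\otimes\chi^{-1})$, and $\nu\otimes\chi^{-1}$ is \emph{not} in the Weyl orbit of $\nu\chi\otimes\chi$ when $\chi$ has order $3$ (for ramified $\chi$ it even has a different inertial support). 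You have transported verbatim the corresponding step of the quadratic case (Mui\'{c}'s Proposition 4.1), where $I(\nu\chi\otimes\chi)=I(\nu\otimes\chi)$ does hold because $\chi=\chi^{-1}$; here it fails. This is not cosmetic: that identity is exactly how the fourth composition factor is recognized. The correct rewriting is $I(\nu\chi^{-1}\otimes\chi^{-1})=I_{\beta}\left(1,\pi(\chi^{-1},\chi^{-1})\right)$ with $\pi(\chi^{-1},\chi^{-1})=I^{\beta}(\chi^{-1}\otimes\chi^{-1})$; from your version the Langlands classification would instead produce $J_{\beta}(1,\pi(1,\chi^{-1}))$, which is not a constituent of $I(\nu\chi\otimes\chi)$ at all, so the bookkeeping for \refeq{eq:Ia2} could not be completed, even though you wrote the correct factor afterwards.

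Two further points. The irreducibility of $I^{\beta}(\nu\chi\otimes\chi)$ is a $\GL_2$ statement and follows from Proposition \ref{prop:Mgamma}(1); it is not a consequence of \cite[Theorem 3.1]{Muic1997}, which concerns the $G_2$-inductions $I_{\beta}(s,\cdot)$ — the fact you need is true, but the citation is misplaced. Finally, the placement step is only asserted. Once the four distinct factors are known, the finish (in substance the paper's) is: $I_{\alpha}(1/2,\delta(\chi))+I_{\alpha}(1/2,\chi\circ\det)=I(\nu\chi\otimes\chi)$ in $\operatorname{R}(G_2)$ has length $4$ and both summands reduce, so each has exactly two factors; since $\pi(\chi)$ and $J_{\alpha}(1/2,\delta(\chi))$ already lie in $I_{\alpha}(1/2,\delta(\chi))$ (and, as you note, $\pi(\chi)$ is excluded from $I_{\alpha}(1/2,\chi^{\pm1}\circ\det)$ by \refeq{eq:raIaTriv2} and \refeq{eq:raIaTriv22}), the other summand must consist of $J_{\alpha}(1/2,\delta(\chi^{-1}))$ and $J_{\beta}(1,\pi(\chi^{-1},\chi^{-1}))$; the paper rules out a third factor in $I_{\alpha}(1/2,\delta(\chi))$ by observing that \refeq{eq:raIaTriv2} is contained in neither \refeq{eq:raIaSt22} nor \refeq{eq:raIaTriv22}, and the same argument with $\chi$ and $\chi^{-1}$ exchanged gives the remaining lines of \refeq{eq:Ia2}. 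Your comparison of $r_{\alpha}\pi(\chi)$ with the Jacquet modules of the $\circ\det$ inductions is correct but by itself does not force the decomposition; you still need this length (or non-containment) argument, and it in turn requires the correctly identified fourth factor.
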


\begin{rem}
  We point out that there is a typo in \cite[Proposition 4.2]{Muic1997}, the
  $J_{\beta} \left( 1, \pi (\chi, \chi^{-1}) \right)$ should be replaced by
  $J_{\beta} \left( 1,
  \pi (\chi^{-1}, \chi^{-1}) \right)$ instead.
\end{rem}
\begin{proof}
  By a theorem of F. Rodier \cite{Rodier1981}, we know $I( \nu \chi \otimes \chi)$ has length $4$,
multiplicity $1$, thus it contains a unique irreducible subrepresentation,
denoted by $\pi (\chi)$.

Notice that $\pi (\chi)$ is a (unique) subrepresentation of $I_{\alpha}(\nu^{1/2} \delta (\chi^{-1}))$
  and $I_{\alpha} \left( \nu^{1/2} \delta (\chi) \right)$ since they are subrepresentations
  of $I(\nu \chi^{-1} \otimes \chi^{-1})$ and $I(\nu \chi \otimes \chi)$
  respectively. $r_{\alpha} (\pi (\chi))$ must be contained in the common parts of
  \refeq{eq:raIaSt2} and \refeq{eq:raIaSt22}. Since
  \[
    0 \neq \Hom_{G_2} (\pi (\chi), I_{\alpha}(\nu^{1/2} \delta (\chi^{-1}))) =
    \Hom_{L_{\alpha}}(r_{\alpha} (\pi (\chi)),\nu^{1/2} \delta (\chi^{-1})) ,
    \]
 we know  $r_{\alpha} (\pi (\chi))$ must contain $\nu^{1/2} \delta (\chi^{-1})$. Repeat the above argument with
  $I(\nu \chi \otimes \chi)$ replaced by $I( \nu \chi^{-1} \otimes \chi^{-1})$, $r_{\alpha} (\pi (\chi))$ must contain $\nu^{1/2} \delta (\chi)$. To satisfy these three conditions, we
  conclude that
  \[
    r_{\alpha}(\pi (\chi)) = \nu^{1/2} \delta (\chi) + \nu^{1/2}
    \delta (\chi^{-1}).
  \]
  Composing with $r^{L_{\alpha}}_{T}$, we prove
  $(1)$.

Applying Langlands quotient theorem to the following:
  \[
    I(\nu \chi \otimes
  \chi) =I(\nu \chi^{-1} \otimes \chi^{-1}) = I_{\beta} (1, \pi (\chi^{-1},
  \chi^{-1})),
\]
we see $J_{\beta} (1, \pi (\chi^{-1}, \chi^{-1}))$ is a composition factor of
  $I(\nu \chi
  \otimes \chi)$. The same theorem applied to $I_{\alpha} (1/2, \delta (\chi))$
  and $I_{\alpha} (1/2, \delta (\chi^{-1}))$, we find the rest two (it has length $4$)
  composition factors: $J_{\alpha} (1/2, \delta (\chi))$ and $J_{\alpha} (1/2,
  \delta (\chi^{-1}))$.

   We have $\pi(\chi) + J_{\alpha}(1/2, \delta (\chi)) \subseteq I_{\alpha} (1/2,
  \delta (\chi))$, and we can exclude the possibility that $I_{\alpha} (1/2,
  \delta (\chi))$ contains more terms because \refeq{eq:raIaTriv2} is not
  contained in \refeq{eq:raIaTriv22} or \refeq{eq:raIaSt22}. Using the same
  argument for $\chi^{-1}$ case, we can prove $(2)$.

\end{proof}

Using similar methods of the proposition above, we can find the image of $4$
components under $r_{\alpha}$ and $r_{\beta}$.
We list them in the following proposition:
\begin{prop}\label{prop:rarbAll2}
   Based on the proof of Proposition \ref{prop:Muic4.2}, we have the following
  \begin{equation}\label{eq:raraAll2}
  \begin{aligned}
    r_{\alpha} (\pi (\chi))&= \nu^{1/2} \delta (\chi) + \nu^{1/2}  \delta (\chi^{-1}), \quad r_{\beta} (\pi (\chi)) = I^{\beta} (\nu \chi \otimes \chi),\\
    r_{\alpha} (J_{\alpha} (1/2, \delta (\chi))) &= \nu^{-1/2} \delta (\chi^{-1}) + \nu^{1/2} \chi^{-1} \circ \det +I^{\alpha} (\nu \chi \otimes \nu^{-1} \chi), \\
    r_{\beta} (J_{\alpha}(1/2, \delta (\chi))) &= I^{\beta} (\nu^{-1} \chi \otimes \nu \chi) +I^{\beta} (\chi^{-1} \otimes \nu \chi^{-1}), \\
    r_{\alpha} (J_{\alpha} (1/2, \delta (\chi^{-1}))) & = \nu^{-1/2} \delta (\chi)+ \nu^{1/2} \chi \circ \det +I^{\alpha} (\nu \chi^{-1} \otimes \nu^{-1} \chi^{-1}) , \\
    r_{\beta} (J_{\alpha} (1/2, \delta (\chi^{-1}))) &= I^{\beta} (\nu^{-1} \chi^{-1} \otimes \nu \chi^{-1}) +I^{\beta} (\chi \otimes \nu \chi), \\
    r_{\alpha} (J_{\beta} (1, \pi (\chi^{-1}, \chi^{-1}))) &= \nu^{-1/2} \chi^{-1} \circ \det + \nu^{-1/2} \chi \circ \det, \\
    r_{\beta} (J_{\beta} (1, \pi (\chi^{-1}, \chi^{-1}))) & = I^{\beta} (\nu^{-1} \chi \otimes \chi).
  \end{aligned}
\end{equation}
\end{prop}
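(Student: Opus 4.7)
The plan is to extract the Jacquet modules of each of the four composition factors from the geometric-lemma decompositions in Corollary \ref{cor:ra2} and Corollary \ref{cor:rb2}, using the composition series already established in Proposition \ref{prop:Muic4.2}. Note that the identity $r_\alpha(\pi(\chi)) = \nu^{1/2}\delta(\chi) + \nu^{1/2}\delta(\chi^{-1})$ is already derived inside the proof of Proposition \ref{prop:Muic4.2}, so the bulk of the work consists of the remaining seven formulas; each is obtained by a subtraction in the Grothendieck group, after first pinning down one ``anchor'' subquotient via Frobenius reciprocity.

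For $r_\alpha$ of the three Langlands quotients, I proceed by direct subtraction. Plugging the decomposition $I_\alpha(1/2, \delta(\chi)) = \pi(\chi) + J_\alpha(1/2, \delta(\chi))$ into \refeq{eq:raIaSt2} and removing the known $r_\alpha(\pi(\chi))$ reads off $r_\alpha(J_\alpha(1/2, \delta(\chi)))$. The $\chi \leftrightarrow \chi^{-1}$ version applied to \refeq{eq:raIaSt22} gives $r_\alpha(J_\alpha(1/2, \delta(\chi^{-1})))$. Finally, combining $I_\alpha(1/2, \chi \circ \det) = J_\beta(1, \pi(\chi^{-1}, \chi^{-1})) + J_\alpha(1/2, \delta(\chi^{-1}))$ with \refeq{eq:raIaTriv2} and subtracting the just-computed $r_\alpha(J_\alpha(1/2, \delta(\chi^{-1})))$ pins down $r_\alpha(J_\beta(1, \pi(\chi^{-1}, \chi^{-1})))$.

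For $r_\beta$, the crucial first step is $r_\beta(\pi(\chi))$. Under the hypothesis $\chi^3 = 1$, $\chi \neq 1$, Proposition \ref{prop:Mgamma} ensures $I^\beta(\nu\chi \otimes \chi)$ is irreducible. The embedding $\pi(\chi) \hookrightarrow I(\nu\chi \otimes \chi) = I_\beta(I^\beta(\nu\chi \otimes \chi))$, combined with Frobenius reciprocity, forces $I^\beta(\nu\chi \otimes \chi)$ to occur as an irreducible quotient (hence composition factor) of $r_\beta(\pi(\chi))$. Comparing with the three-term expression \refeq{eq:rbIaSt2} for $r_\beta(I_\alpha(1/2, \delta(\chi))) = r_\beta(\pi(\chi)) + r_\beta(J_\alpha(1/2, \delta(\chi)))$ isolates the stated value of $r_\beta(\pi(\chi))$ as well as $r_\beta(J_\alpha(1/2, \delta(\chi)))$. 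The identical strategy with $\chi \leftrightarrow \chi^{-1}$ applied to \refeq{eq:rbIaSt22} yields $r_\beta(J_\alpha(1/2, \delta(\chi^{-1})))$; substitution into the composition series of $I_\alpha(1/2, \chi \circ \det)$ and subtraction via \refeq{eq:rbIaTriv2} then produces $r_\beta(J_\beta(1, \pi(\chi^{-1}, \chi^{-1})))$.

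The main obstacle is verifying the pairwise non-isomorphism of the three $L_\beta$-principal series appearing in \refeq{eq:rbIaSt2}, so that the anchor $I^\beta(\nu\chi \otimes \chi)$ can be unambiguously singled out. Because $I^\beta(\chi_1 \otimes \chi_2) = I^\beta(s_\beta(\chi_1 \otimes \chi_2))$ in $\operatorname{R}(L_\beta)$ by \cite[Lemma 5.4(iii)]{BernsteinDeligneKazhdan1986}, one must compare each pair of inducing data up to the $s_\beta$-action; the assumption $\chi^3 = 1$, $\chi \neq 1$ (which in particular rules out $\chi^2 = 1$) is exactly what ensures no two of the three summands coincide after this normalization. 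Once these distinctness checks are recorded, every remaining step is a bookkeeping exercise in $\operatorname{R}(L_\gamma)$.
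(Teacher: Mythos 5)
Your overall scheme --- anchor one constituent by Frobenius reciprocity and then subtract inside the Grothendieck group using the composition series from Proposition \ref{prop:Muic4.2} together with Corollaries \ref{cor:ra2} and \ref{cor:rb2} --- is exactly the ``similar methods'' the paper has in mind (it gives no separate proof of this proposition), and your $r_{\alpha}$ computations are correct as stated: $r_{\alpha}(\pi(\chi))$ is indeed already obtained in the proof of Proposition \ref{prop:Muic4.2}, and the three subtractions against \refeq{eq:raIaSt2}, \refeq{eq:raIaSt22} and \refeq{eq:raIaTriv2} reproduce the listed values.

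There is, however, a genuine gap on the $r_{\beta}$ side. Frobenius reciprocity applied to $\pi(\chi)\hookrightarrow I_{\beta}\left(I^{\beta}(\nu\chi\otimes\chi)\right)$ only shows that the irreducible representation $I^{\beta}(\nu\chi\otimes\chi)$ \emph{occurs} in $r_{\beta}(\pi(\chi))$; by itself, ``comparing with the three-term expression \refeq{eq:rbIaSt2}'' does not isolate anything, because nothing you have said prevents $r_{\beta}(\pi(\chi))$ from also containing (constituents of) $I^{\beta}(\nu^{-1}\chi\otimes\nu\chi)$ or $I^{\beta}(\chi^{-1}\otimes\nu\chi^{-1})$, with the remainder going to $J_{\alpha}(1/2,\delta(\chi))$. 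Since all four subsequent $r_{\beta}$-formulas are obtained by subtracting $r_{\beta}(\pi(\chi))$, the exact equality $r_{\beta}(\pi(\chi))=I^{\beta}(\nu\chi\otimes\chi)$ must be justified, and the pairwise non-isomorphism of the three summands of \refeq{eq:rbIaSt2} (which you single out as the main obstacle) does not do this job. The missing ingredient is already available: Proposition \ref{prop:Muic4.2}(1) gives $r_{\emptyset}(\pi(\chi))=\nu\chi\otimes\chi+\nu\chi^{-1}\otimes\chi^{-1}$, of length $2$ in $\operatorname{R}(T)$, and $r_{\emptyset}=r^{L_{\beta}}_{T}\circ r_{\beta}$; since $r^{L_{\beta}}_{T}\left(I^{\beta}(\nu\chi\otimes\chi)\right)=\nu\chi\otimes\chi+\nu\chi^{-1}\otimes\chi^{-1}$ (here $\chi\neq\chi^{-1}$, so this already has length $2$), no further constituent can occur in $r_{\beta}(\pi(\chi))$ --- equivalently, the $T$-characters supporting the other two summands of \refeq{eq:rbIaSt2} do not appear in $r_{\emptyset}(\pi(\chi))$. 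With this one line added, your subtractions go through and yield all seven remaining identities.
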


\begin{prop}\label{prop:Duality2}
   We compute the Aubert-Zelevinsky duality of all the irreducible
  representations listed above:
\begin{equation}\label{eq:Duality2}
  \begin{aligned}
    D_{G_2} (\pi (\chi)) = J_{\beta} (1, \pi (\chi^{-1}, \chi^{-1})), \quad D_{G_2} (J_{\alpha} (1/2, \delta(\chi))) = J_{\alpha} (1/2, \delta (\chi^{-1}))  \ [\operatorname{R}(G_2)]
  \end{aligned}
\end{equation}
\end{prop}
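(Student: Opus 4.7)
The plan is to mimic the computation performed in Proposition \ref{prop:Duality1}: expand $D_{G_2}$ as the alternating sum
\[
D_{G_2}(\pi) = \pi - I_{\alpha}\circ r_{\alpha}(\pi) - I_{\beta}\circ r_{\beta}(\pi) + I\circ r_{\emptyset}(\pi),
\]
plug in the Jacquet module data from Proposition \ref{prop:rarbAll2}, and collapse the expression using the decomposition identities of Proposition \ref{prop:Muic4.2} together with the relations in $\mathrm{R}(G_2)$ coming from \cite[Lemma 5.4 (iii)]{BernsteinDeligneKazhdan1986}. The key identifications I would exploit are $I(\nu\chi\otimes\chi)=I(\nu\chi^{-1}\otimes\chi^{-1})\ [\mathrm{R}(G_2)]$ and $I^{\beta}(\nu\chi\otimes\chi)\cong I^{\beta}(\nu\chi^{-1}\otimes\chi^{-1})$, which were established just before Proposition \ref{prop:Muic4.2}; these will be needed to rewrite the $I_{\beta}$--terms so that they match a single standard $I_{\alpha}(1/2,\chi^{\pm 1}\circ\det)$ up to a copy of $J_{\alpha}(1/2,\delta(\chi^{\pm 1}))$.

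For the first identity, I would compute
\[
r_{\emptyset}(\pi(\chi)) = r_T^{L_{\alpha}}\bigl(\nu^{1/2}\delta(\chi)+\nu^{1/2}\delta(\chi^{-1})\bigr) = \nu\chi\otimes\chi + \chi\otimes\nu\chi + \nu\chi^{-1}\otimes\chi^{-1} + \chi^{-1}\otimes\nu\chi^{-1},
\]
so that $I\circ r_{\emptyset}(\pi(\chi))$ becomes a sum of two copies of $I(\nu\chi\otimes\chi)$ plus cross terms which, after the identification above, combine into a multiple of $I(\nu\chi\otimes\chi)$. Meanwhile $I_{\alpha}\circ r_{\alpha}(\pi(\chi)) = I_{\alpha}(1/2,\delta(\chi)) + I_{\alpha}(1/2,\delta(\chi^{-1}))$ and $I_{\beta}\circ r_{\beta}(\pi(\chi)) = I_{\beta}(I^{\beta}(\nu\chi\otimes\chi)) = I(\nu\chi\otimes\chi)$. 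After cancellation the surviving terms should reduce, via \eqref{eq:Ia2}, to $I_{\alpha}(1/2,\chi\circ\det) - J_{\alpha}(1/2,\delta(\chi^{-1}))= J_{\beta}(1,\pi(\chi^{-1},\chi^{-1}))$, yielding the first equality.

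For the second identity, $D_{G_2}(J_{\alpha}(1/2,\delta(\chi)))$, I would use the $r_{\alpha}$ and $r_{\beta}$ values from Proposition \ref{prop:rarbAll2}, and invoke the property $I_{\alpha}(-1/2,\delta(\chi)) = I_{\alpha}(1/2,\delta(\chi))$ in $\mathrm{R}(G_2)$ (again by \cite[Lemma 5.4 (iii)]{BernsteinDeligneKazhdan1986}) along with the geometric lemma for $r_T^{L_{\alpha}} I^{\alpha}(\nu\chi\otimes\nu^{-1}\chi)$ to reassemble $I\circ r_{\emptyset}$ as a combination of the four standard modules in \eqref{eq:Ia2}. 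Subtracting $I_{\alpha}$ and $I_{\beta}$ contributions, everything should collapse to $I_{\alpha}(1/2,\chi\circ\det)-I_{\alpha}(1/2,\chi^{-1}\circ\det)+J_{\alpha}(1/2,\delta(\chi))$, which by \eqref{eq:Ia2} equals $J_{\alpha}(1/2,\delta(\chi^{-1}))$. Alternatively (and this is the safer fallback), since $\chi$ is unitary and $s=1/2>0$, \cite[Lemma 3.1]{Muic1997} directly gives $D_{G_2}(I_{\alpha}(s,\delta(\chi))) = I_{\alpha}(s,\chi\circ\det)$, and together with additivity of $D_{G_2}$, the involutive property $D_{G_2}^2=\mathrm{Id}$, and the already established formula for $D_{G_2}(\pi(\chi))$ applied to both $\chi$ and $\chi^{-1}$, the identity $D_{G_2}(J_{\alpha}(1/2,\delta(\chi))) = J_{\alpha}(1/2,\delta(\chi^{-1}))$ follows from the decomposition of $I_{\alpha}(1/2,\delta(\chi))$ in \eqref{eq:Ia2}.

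The main obstacle I expect is bookkeeping: tracking the cubic character $\chi$ and its inverse throughout the geometric-lemma expansions (the Weyl orbits mix $\chi$ with $\chi^{-1}$ since $\chi^3=1$), and making sure that the symmetry between the $\chi$ and $\chi^{-1}$ computations is used cleanly rather than redoing calculations twice. Once the $r_{\alpha}$ and $r_{\beta}$ tables are correctly plugged in and the identifications in $\mathrm{R}(G_2)$ are applied, the simplification should be formally identical to the one carried out in \eqref{eq:Duality1proofpichi1}--\eqref{eq:Duality1proofJa1}.
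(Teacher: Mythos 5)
Your overall strategy is exactly the paper's: expand $D_{G_2}$ as the alternating sum, insert the Jacquet-module data of Proposition \ref{prop:rarbAll2}, and collapse using the decompositions \eqref{eq:Ia2}. Your fallback for the second identity (apply $D_{G_2}$ to $I_{\alpha}(1/2,\delta(\chi))=\pi(\chi)+J_{\alpha}(1/2,\delta(\chi))$, use $D_{G_2}(I_{\alpha}(1/2,\delta(\chi)))=I_{\alpha}(1/2,\chi\circ\det)$ from \cite[Lemma 3.1]{Muic1997} together with the already established value of $D_{G_2}(\pi(\chi))$) is valid and is essentially the shortcut the paper itself invokes in the quadratic case, whereas here the paper redoes the explicit computation \eqref{eq:Duality1proofJa2}; note it needs neither $D_{G_2}^2=\mathrm{Id}$ nor the $\chi^{-1}$ case, only additivity and \eqref{eq:Ia2}.

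However, your computation of the first identity contains a step that fails as written: you evaluate $r_{T}^{L_{\alpha}}\bigl(\nu^{1/2}\delta(\chi)+\nu^{1/2}\delta(\chi^{-1})\bigr)$ as the four characters $\nu\chi\otimes\chi+\chi\otimes\nu\chi+\nu\chi^{-1}\otimes\chi^{-1}+\chi^{-1}\otimes\nu\chi^{-1}$. The Jacquet module of the essentially square-integrable representation $\nu^{1/2}\delta(\chi)$ of $L_{\alpha}\cong\GL_2(F)$ is the single character $\nu\chi\otimes\chi$; what you wrote is the Jacquet module of the full principal series $I^{\alpha}(\nu\chi\otimes\chi)$. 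The correct value $r_{\emptyset}(\pi(\chi))=\nu\chi\otimes\chi+\nu\chi^{-1}\otimes\chi^{-1}$ is precisely what Proposition \ref{prop:Muic4.2}(1) records, and your four-term expression contradicts it. Concretely, with your input $I\circ r_{\emptyset}(\pi(\chi))$ equals $4\,I(\nu\chi\otimes\chi)$ in $\operatorname{R}(G_2)$ instead of $2\,I(\nu\chi\otimes\chi)$, so after subtracting $I_{\alpha}(1/2,\delta(\chi))+I_{\alpha}(1/2,\delta(\chi^{-1}))+I(\nu\chi\otimes\chi)$ and adding $\pi(\chi)$ there remains an extra $2\,I(\nu\chi\otimes\chi)$, and the expression does not reduce to $I_{\alpha}(1/2,\chi\circ\det)-J_{\alpha}(1/2,\delta(\chi^{-1}))=J_{\beta}(1,\pi(\chi^{-1},\chi^{-1}))$ as you assert; the claimed cancellation simply does not occur. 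Once the Jacquet module is corrected, your calculation becomes the paper's \eqref{eq:Duality1proofpichi2}, and both identities follow.
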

\begin{proof}
  We compute as follows:
  \begin{equation}\label{eq:Duality1proofpichi2}
  \begin{aligned}
  & D_{G_2} (\pi (\chi)) = I \circ r_T^{G_2} (\pi (\chi)) - I_{\alpha} \circ r_{\alpha} (\pi (\chi)) - I_{\beta} \circ r_{\beta} (\pi (\chi)) + \pi (\chi) \\
   &= I(\nu
     \chi \otimes \chi) +I( \nu \chi^{-1} \otimes \chi^{-1})   - I_{\alpha}\left(\nu^{1/2} \delta (\chi) \right) - I_{\alpha} \left( \nu^{1/2}  \delta (\chi^{-1})  \right) \\
  & -I_{\beta} \left(  I^{\beta} (\nu \chi \otimes \chi)\right) + \pi (\chi)\\
   & = I_{\alpha} (1/2, \chi \circ \det) -I_{\alpha} (1/2, \delta (\chi^{-1})) + \pi (\chi) \\
   & = J_{\beta} (1, \pi (\chi^{-1}, \chi^{-1})) +J_{\alpha} (1/2, \delta (\chi^{-1})) -\pi (\chi) -J_{\alpha}(1/2, \delta (\chi^{-1})) + \pi (\chi) \\
   &= J_{\beta} (1, \pi (\chi^{-1}, \chi^{-1})) .
  \end{aligned}
\end{equation}
Similarly, we have
 \begin{equation}\label{eq:Duality1proofJa2}
  \begin{aligned}
    & D_{G_2} \left( J_{\alpha} \left(1/2, \delta (\chi) \right) \right) = I \circ r_T^{G_2}  \left(J_{\alpha} \left(1/2, \delta (\chi ) \right) \right) - I_{\alpha} \circ r_{\alpha} \left( J_{\alpha} \left(1/2, \delta (\chi) \right) \right)\\
    & - I_{\beta} \circ r_{\beta} \left( J_{\alpha} \left(1/2, \delta (\chi) \right) \right) + J_{\alpha} \left(1/2, \delta (\chi) \right) \\
    &= I \circ r_T^{L_{\alpha}} ( \nu^{-1/2} \delta (\chi^{-1}) ) + I \circ r_T^{L_{\alpha}} ( \nu^{1/2} \chi^{-1} \circ \det) +I \circ   r_T^{L_{\alpha}} (I^{\alpha} (\nu \chi \otimes \nu^{-1} \chi))\\
    & - I_{\alpha} (\nu^{-1/2} \delta (\chi^{-1})) -I_{\alpha} (\nu^{1/2} \chi^{-1} \circ \det ) -I_{\alpha} ( I^{\alpha} (\nu \chi \otimes \nu^{-1} \chi))) - I_{\beta} ( I^{\beta} (\nu^{-1} \chi \otimes \nu \chi) ) \\ & -I_{\beta} ( I^{\beta} (\chi^{-1} \otimes \nu \chi^{-1}) ) + J_{\alpha} \left(1/2, \delta (\chi) \right) \\
    & = I \left( \chi^{-1} \otimes \nu^{-1} \chi^{-1} \right) + I \left( \chi^{-1} \otimes \nu \chi^{-1} \right)  + 2 I \left( \nu \chi \otimes \nu^{-1} \chi \right)  - I_{\alpha} (1/2,  \delta (\chi)) \\ &-I_{\alpha} (1/2 , \chi^{-1} \circ \det ) -I(\nu \chi \otimes \nu^{-1} \chi) - I( \nu^{-1}\chi \otimes \nu \chi)  -I (\chi^{-1} \otimes \nu \chi^{-1})\\ & + J_{\alpha} \left(1/2, \delta (\chi) \right) \\
    &=  I \left( \chi^{-1} \otimes \nu^{-1} \chi^{-1} \right)  - I_{\alpha} (1/2,  \delta (\chi)) -I_{\alpha} (1/2 , \chi^{-1} \circ \det ) +J_{\alpha} \left(1/2, \delta (\chi) \right)  \\
    & = J_{\beta} (1, \pi (\chi^{-1}, \chi^{-1})) +J_{\alpha} (1/2, \delta (\chi^{-1})) -J_{\beta} \left(1, \pi (\chi^{-1}, \chi^{-1}\right)) -J_{\alpha} \left(1/2, \delta (\chi)\right) \\
    & + J_{\alpha} \left(1/2, \delta (\chi) \right)  \\
    & = J_{\alpha} (1/2, \delta (\chi^{-1})).
  \end{aligned}
\end{equation}
\end{proof}

\subsubsection{Case \refeq{Muic1997itm:3}  within the case $\mathfrak{s}=[T,  \xi \otimes \xi]_G$:
  $\chi$ ramified cubic}

If $\chi$ is ramified cubic, $\mathfrak{J}^{\mathfrak{s}}
=\SL_3(\C)$, and $\H^{\mathfrak{s}} = \H
(\operatorname{PGL}_3(F), 1)$. By \cite[table
20]{AubertXu2023explicit} and \cite[table 4.1]{Ram2004Rank2}, we get the proposition for modules of Hecke algebra:

\begin{prop}
  Using
the indexing triples to denote the standard modules, we have:
  \begin{equation}
    \begin{aligned}
      D_{\H^{\mathfrak{s}}} (M_{t_a, e_{\alpha^{\vee}}+e_{2 \alpha^{\vee} + 3 \beta^{\vee}}, 1}) &= M_{t_a, 0,1}, \\
      D_{\H^{\mathfrak{s}}} (M_{t_a, e_{\alpha^{\vee}}, 1}) &= M_{t_a, e_{3 \beta^{\vee}+2 \alpha^{\vee}}, 1}  .
    \end{aligned}
  \end{equation}
\end{prop}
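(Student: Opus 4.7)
The plan is to transport Proposition \ref{prop:Duality2} across the categorical equivalence $\Rep^{\mathfrak{s}}(G_2) \simeq \H^{\mathfrak{s}}\smod$ and then read off the images in terms of Kazhdan-Lusztig triples. Here the isomorphism $\H^{\mathfrak{s}} \cong \H(\operatorname{PGL}_3(F), 1_J)$ is the one recalled in Section \ref{subsec:Principalblocks} (following \cite[Section 8]{Roche1998}), which is support- and $\kappa$-preserving, so it transports the Aubert-Zelevinsky involution $\ND_{G_2}$ to the module-theoretic duality $D_{\H^{\mathfrak{s}}}$ described in Theorem \ref{thm:KatoEn}.

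First, I would fix the dictionary between irreducible objects on the two sides. Combining \cite[table 20]{AubertXu2023explicit} with the list of standard modules of affine Hecke algebras of type $A_2$ in \cite[Table 4.1]{Ram2004Rank2}, the four composition factors produced in Proposition \ref{prop:Muic4.2} correspond to
\begin{equation*}
\begin{aligned}
\pi(\chi) &\longleftrightarrow M_{t_a,\, e_{\alpha^{\vee}}+e_{2\alpha^{\vee}+3\beta^{\vee}},\, 1}, & J_{\alpha}(1/2,\delta(\chi)) &\longleftrightarrow M_{t_a,\, e_{\alpha^{\vee}},\, 1},\\
J_{\beta}(1,\pi(\chi^{-1},\chi^{-1})) &\longleftrightarrow M_{t_a,\, 0,\, 1}, & J_{\alpha}(1/2,\delta(\chi^{-1})) &\longleftrightarrow M_{t_a,\, e_{3\beta^{\vee}+2\alpha^{\vee}},\, 1}.
\end{aligned}
\end{equation*}
The matching uses that $\pi(\chi)$ is the unique tempered (square-integrable in this case) subrepresentation of $I(\nu\chi \otimes \chi)$, whereas $J_{\beta}(1,\pi(\chi^{-1},\chi^{-1}))$ is the Langlands quotient of the full induced representation, so on the Hecke side these must be the standard modules with full and zero nilpotent parameter respectively; the two $J_{\alpha}(1/2,\delta(\chi^{\pm 1}))$ correspond to the two intermediate triples swapped by the action of the relevant component of the Weyl group.

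Second, I would invoke Proposition \ref{prop:Duality2}, which gives
\[
\ND_{G_2}(\pi(\chi)) = J_{\beta}(1,\pi(\chi^{-1},\chi^{-1})), \qquad \ND_{G_2}(J_{\alpha}(1/2,\delta(\chi))) = J_{\alpha}(1/2,\delta(\chi^{-1})).
\]
Transporting these two equalities through the dictionary above and using that the equivalence $\Reps(G_2) \simeq \H^{\mathfrak{s}}\smod$ intertwines $\ND_{G_2}$ with $D_{\H^{\mathfrak{s}}}$ (which is the content of Theorem \ref{thm:KatoEn} when combined with the support-preserving isomorphism of Hecke algebras), the two displayed equalities in the proposition follow at once.

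The step I expect to be most delicate is the matching itself: verifying that the four composition factors pair correctly with the four triples of Kazhdan-Lusztig data for $\SL_3(\C)$, and in particular that the two triples $(t_a, e_{\alpha^{\vee}}, 1)$ and $(t_a, e_{3\beta^{\vee}+2\alpha^{\vee}}, 1)$ attach to $J_{\alpha}(1/2,\delta(\chi))$ and $J_{\alpha}(1/2,\delta(\chi^{-1}))$ respectively rather than the other way around. This is fixed by comparing the tempered support of the standard module (the reduced variety $\mathcal{B}_{s,u}$ for the given nilpotent) against the Jacquet module computations of Proposition \ref{prop:rarbAll2}, which distinguish $\chi$ from $\chi^{-1}$ on the short-root side. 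Once this normalization is pinned down, the duality statement is purely a matter of transport, and no further calculation with the star involution on $\H^{\mathfrak{s}}$ is needed.
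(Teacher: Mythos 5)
Your proposal is correct and follows essentially the same route as the paper: the paper's proof consists precisely of combining Proposition \ref{prop:Duality2} with the dictionary between the four composition factors and the Kazhdan--Lusztig triples given by \cite[table 20]{AubertXu2023explicit} and \cite[Table 4.1]{Ram2004Rank2}, and then transporting through the equivalence $\Reps(G_2)\simeq \H^{\mathfrak{s}}\smod$. Your extra discussion of how the matching is pinned down (tempered subrepresentation versus Langlands quotient, and the Jacquet-module data of Proposition \ref{prop:rarbAll2} distinguishing $\chi$ from $\chi^{-1}$) is a fuller justification of what the paper cites from the tables, not a different argument.
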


\subsubsection{Case \refeq{Muic1997itm:3}  within the case $\mathfrak{s}=[T,  1]_G$:
  $\chi$ unramified cubic}\label{subsec:case3last}

If $\chi$ is unramified cubic, $\mathfrak{J}^{\mathfrak{s}}
=G_2(\C)$, and $\H^{\mathfrak{s}} = \H
(G_2(F), 1)$ by \cite[table
19]{AubertXu2023explicit} and \cite[table 6.2]{Ram2004Rank2}. We get the proposition for modules of Hecke algebra:

\begin{prop}
   Using the indexing triples to denote the standard modules, we have:
  \begin{equation}
    \begin{aligned}
      D_{\H^{\mathfrak{s}}} (M_{t_c, e_{\alpha^{\vee}}+e_{ \alpha^{\vee} + 3 \beta^{\vee}}, 1}) &= M_{t_c, 0,1}, \\
      D_{\H^{\mathfrak{s}}} (M_{t_c, e_{\alpha^{\vee}}, 1}) &= M_{t_c, e_{3 \beta^{\vee}+ \alpha^{\vee}}, 1}  .
    \end{aligned}
  \end{equation}
\end{prop}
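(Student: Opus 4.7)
The plan is to transport the Aubert--Zelevinsky duality computation of Proposition \ref{prop:Duality2} across the equivalence of categories between $\Rep^{\mathfrak{s}}(G_2)$ and $\H^{\mathfrak{s}}\smod$. Since $\chi$ is unramified cubic, we are in the principal block $\mathfrak{s}=[T,1]_G$ with $\mathfrak{J}^{\mathfrak{s}}=G_2(\mathbb{C})$, so Roche's theory (Section \ref{subsec:Principalblocks}) gives $\H^{\mathfrak{s}}\cong\H(G_2(F),1_J)$, an Iwahori--Hecke algebra whose simple modules are parameterised by Kazhdan--Lusztig indexing triples.

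First, I would identify each of the four irreducible composition factors appearing in Proposition \ref{prop:Muic4.2} with its corresponding standard module. The semisimple parameter $t_c$ in \cite[Table 19]{AubertXu2023explicit} is precisely the element of $\widehat{T}$ attached (via the local Langlands correspondence for the torus) to the unramified cubic character $\chi$ on the long-root side of $G_2$; the four nilpotent/component-group decorations $e_{\alpha^{\vee}}+e_{\alpha^{\vee}+3\beta^{\vee}}$, $e_{\alpha^{\vee}}$, $e_{3\beta^{\vee}+\alpha^{\vee}}$ and $0$ match respectively $\pi(\chi)$, $J_{\alpha}(1/2,\delta(\chi))$, $J_{\alpha}(1/2,\delta(\chi^{-1}))$, and $J_{\beta}(1,\pi(\chi^{-1},\chi^{-1}))$ by the dictionary of \cite[Table 6.2]{Ram2004Rank2}. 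This identification must respect temperedness, the central character, and the Langlands quotient structure; these three pieces of data suffice to pin down the triple uniquely up to $A(s,u)$-representation, and here all component group representations are trivial.

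Second, I would invoke the compatibility of the Aubert--Zelevinsky duality functor $\ND_G$ with the categorical equivalence: under the isomorphism $\Rep^{\mathfrak{s}}(G_2)\simeq\H^{\mathfrak{s}}\smod$, $\ND_G$ corresponds to the Iwahori--Matsumoto involution $D_{\H^{\mathfrak{s}}}$ of Theorem \ref{thm:KatoEn}. Applying this dictionary to the two identities $\ND_{G_2}(\pi(\chi))=J_{\beta}(1,\pi(\chi^{-1},\chi^{-1}))$ and $\ND_{G_2}(J_{\alpha}(1/2,\delta(\chi)))=J_{\alpha}(1/2,\delta(\chi^{-1}))$ from Proposition \ref{prop:Duality2} immediately yields the two equalities of the statement, up to an overall sign $(-1)^{|S|}=(-1)^{2}=+1$ which causes no issue here.

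The main obstacle is the bookkeeping in the first step: the principal block for $G_2$ with $\chi$ unramified cubic contains several Bernstein components with the same inertial class of supercuspidal support, and one must verify that the matching of the four irreducibles with the four triples $(t_c,n,\rho)$ is the correct one (in particular that $\pi(\chi)$ really carries the ``large'' nilpotent $e_{\alpha^{\vee}}+e_{\alpha^{\vee}+3\beta^{\vee}}$ rather than any other nilpotent in the same $G_2(\mathbb{C})$-orbit closure). This is checked by comparing the Jacquet restrictions computed in Proposition \ref{prop:rarbAll2} with the Kazhdan--Lusztig parametrisation, using the fact that the nilpotent support of a standard module is determined by its wavefront set on the $p$-adic side; once this matching is fixed, the statement of the proposition is a direct translation of Proposition \ref{prop:Duality2}.
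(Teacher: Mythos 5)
Your proposal is correct and follows essentially the same route as the paper: the paper likewise identifies the four composition factors of Proposition \ref{prop:Muic4.2} with the triples $(t_c,n,1)$ via \cite[Table 19]{AubertXu2023explicit} and \cite[Table 6.2]{Ram2004Rank2}, and then transports the identities of Proposition \ref{prop:Duality2} through the block equivalence using the compatibility of $\ND_{G_2}$ with the involution $D_{\H^{\mathfrak{s}}}$ of Theorem \ref{thm:KatoEn}. Your aside that the sign is ``$(-1)^{|S|}$'' is not the right general principle (the sign depends on the cuspidal support of the representation, as the intermediate-series case $\ND_G(\pi(\sigma))=-J(\sigma)$ shows), but it is immaterial here since Proposition \ref{prop:Duality2} already yields the dual representations with coefficient $+1$.
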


\section{Verification of several cases of the Bernstein conjecture for $G_2$}

Let $\pi^{+}$ denote the Hermitian contragredient (taking contragredient and complex conjugate) of a smooth finite length representation $\pi$. If $\pi \cong \pi^{+}$, we say that $\pi$ is a \emph{Hermitian} representation. Or equivalently, if there is a non–degenerate $G$–invariant Hermitian form $\langle \cdot, \cdot \rangle$ on $V_{\pi}$. An Hermitian representation $\pi$ is \emph{unitarizable} if the form $\langle \cdot, \cdot \rangle$ is definite. We recall some basic facts about unitarizable representations, see \cite{Casselman1995} and \cite{MuicTadic2011}.

\begin{prop}\label{prop:unitarizable}
  \begin{enumerate}
  \item A unitarizable admissible representation of $G$  is  isomorphic to a direct sum of irreducible admissible unitarizable representations, each isomorphism class occurring with finite multiplicity.
  \item An irreducible admissible square-integrable representation is unitarizable.
  \item Let $P = MN$ be a parabolic subgroup of $G$ and $\sigma$ be an irreducible representation of $M$, then the representation $i_{P}^{G}(\sigma)$ is unitarizable if $\sigma$ is.
  \end{enumerate}
\end{prop}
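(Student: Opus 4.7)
For part (1), let $(\pi, V)$ be admissible and unitarizable with $G$-invariant positive definite Hermitian form $\langle \cdot, \cdot \rangle$. The plan is to establish complete reducibility and finite multiplicity separately, following arguments in \cite{Renard2010} and Casselman's unpublished notes. For complete reducibility, let $W \subset V$ be a smooth $G$-subrepresentation and define $W^\perp := \{v \in V : \langle v, w \rangle = 0 \text{ for all } w \in W\}$, which is automatically $G$-stable and smooth. The key observation is that for each compact open $K \subset G$, admissibility gives $\dim V^K < \infty$, the averaging projector $e_K$ is self-adjoint for the form, and consequently $W^\perp \cap V^K = (W^K)^{\perp}_{V^K}$; hence $V^K = W^K \oplus (W^\perp \cap V^K)$ for every $K$, which globalizes to $V = W \oplus W^\perp$. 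A Zorn's lemma argument then decomposes $V$ into irreducible unitarizable submodules. Finite multiplicity is immediate: if some irreducible $\tau$ occurred infinitely often, choosing $K$ with $V_\tau^K \neq 0$ would force $\dim V^K = \infty$, contradicting admissibility.

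For part (2), the plan is to produce an invariant inner product from matrix coefficients. Fix $v_0 \in V$ nonzero and $\lambda \in V^\vee$ with $\lambda(v_0) \neq 0$, and set $f_v(g) := \lambda(\pi(g) v)$. The square-integrability hypothesis, together with the fact that the central character of $\pi$ is necessarily unitary (forced by the $Z$-equivariance of $|f_v|^2$ and integrability modulo the center), gives $f_v \in L^2(Z \backslash G, \omega_\pi)$. The pairing $\langle v, w \rangle := \int_{Z \backslash G} f_v(g) \overline{f_w(g)} \, dg$ is then $G$-invariant, positive semi-definite, and nondegenerate by irreducibility, hence defines a unitary structure on $V$.

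For part (3), given a unitary structure $(\cdot, \cdot)_\sigma$ on $V_\sigma$ and a good maximal compact subgroup $K$ with $G = PK$, the plan is to equip $\NI_P^G (V_\sigma)$ with
\[
  \langle f_1, f_2 \rangle := \int_K (f_1(k), f_2(k))_\sigma \, dk,
\]
and verify $G$-invariance via the Iwasawa decomposition, using that the $\delta_P^{1/2}$ normalization in $\NI_P^G$ is precisely designed to absorb the modular character so that this pairing becomes translation invariant. Among the three parts, the most delicate step is the self-adjointness of $e_K$ and the passage from finite-dimensional orthogonal complements to a global smooth complement in (1); this is routine but is the only place where admissibility is genuinely used, and it is also the point at which one must distinguish carefully between the algebraic and Hilbertian notions of direct sum.
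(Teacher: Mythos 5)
Your three sketches are correct in substance, but note that the paper itself offers no proof of this proposition at all: it is recalled as a collection of standard facts with pointers to Casselman's notes and Mui\'c--Tadi\'c, so there is no ``paper proof'' to compare against beyond those references. What you wrote is essentially the textbook argument contained in those sources: the self-adjointness of the averaging idempotent $e_K$ and the identity $V^K = W^K \oplus (W^{\perp}\cap V^K)$ for part (1); square-integrable matrix coefficients producing the invariant form $\langle v,w\rangle=\int_{Z\backslash G} f_v\,\overline{f_w}$ for part (2); and the $K$-integral of the fibrewise form, made $G$-invariant by the $\delta_P^{1/2}$ normalization, for part (3). Two small points you gloss over: in (1), the Zorn argument needs the existence of an irreducible subrepresentation of any nonzero admissible unitary representation (take a finitely generated subrepresentation, use admissibility to get an irreducible quotient, then use the already-established splitting to turn it into a subrepresentation) --- this is where admissibility is used a second time, beyond the finite-multiplicity count; and in (2), the unitarity of the central character is normally built into the definition of square-integrability (otherwise $|f_v|^2$ does not even descend to $Z\backslash G$), so it is a hypothesis rather than something ``forced'' by integrability. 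In (3) you should also say explicitly that $G$-invariance of $f\mapsto\int_K(f_1(k),f_2(k))_{\sigma}\,dk$ rests on the standard lemma that integration over $K$ defines a $G$-invariant functional on functions transforming by $\delta_P$ under left translation by $P$; the Iwasawa decomposition alone does not give invariance. None of these affect correctness; they are the routine details the cited references supply.
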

We can verify many cases of the following conjecture based on our computations in Section \ref{sec:computationsG2} and \cite[Section 5,6]{Muic1997}.
\begin{conj}[Bernstein]
If $\pi$ is an irreducible unitarizable representation of $G_2$ then its Aubert-Zelevinsky duality $\ND (\pi)$ is also unitarizable.
\end{conj}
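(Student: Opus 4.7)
The plan is to verify the conjecture \emph{case-by-case} on the finite list of Bernstein blocks analyzed in Section \ref{sec:computationsG2}, using the explicit computations of $\ND$ summarized in Tables \ref{tab:1}--\ref{tab:4}. For each block $\mathfrak{s}$ I would first enumerate the irreducible unitarizable objects of $\Rep^{\mathfrak{s}}(G_2)$ (those marked $\Unitary$ in the tables), and then check on the same tables that the representation $\ND(\pi)$ lies in the unitarizable list as well. Since $\ND$ is an involution by Section \ref{AZduality}(3), it suffices to pair up unitarizable objects along the rows of each table.

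The input needed for this is a recognition principle for unitarity. I would use Proposition \ref{prop:unitarizable}: square-integrable representations are automatically unitarizable, parabolic induction preserves unitarity, and complementary series (irreducible unitary parabolic inductions of unitary Langlands data with real central exponents in a suitable interval) are unitarizable. Concretely, first I would handle the intermediate cases (Tables \ref{tab:1}, \ref{tab:2}) where $\pi(\sigma)$ is a generic discrete series and $\ND(\pi(\sigma)) = -J(\sigma)$; the unitarity of $J(\sigma)$ follows from the standard fact that the Langlands quotient at the edge of the complementary series for $\NI_P^G(\sigma \otimes \nu^s)$ remains unitarizable, as recorded in \cite{Muic1997}. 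Then for the principal series (Tables \ref{tab:5}--\ref{tab:4}) I would proceed through the parameter cases following the order of Sections \ref{subsec:casebycase}--\ref{subsec:case3last}, matching $\pi(\chi)$, $J_{\alpha}(1/2,\delta(\chi))$, $J_{\beta}(1,\pi(1,\chi))$, $J_{\beta}(1/2,\delta(\chi))$, $\pi'(1)$ and $\St_{G_2}$ with their duals computed in Propositions \ref{prop:Duality1}, \ref{prop:Duality2}, \ref{prop:Duality3}, \ref{prop:Duality4}.

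The main obstacle will be establishing unitarity of the non-tempered Langlands quotients, especially $J_{\beta}(1,\pi(1,\chi))$, $J_{\beta}(1,\pi(\chi^{-1},\chi^{-1}))$ and $J_{\beta}(1/2,\delta(\chi))$ in the ramified quadratic and cubic cases, as well as the endpoint $J_{\alpha}(1/2,\delta(1))$ and $J_{\beta}(1/2,\delta(1))$ appearing in Proposition \ref{prop:I1v}. For these I would follow the strategy pointed out by Chi-Heng Lo: apply the techniques of \cite{HJLLZ2024}, which reduce the question to checking signatures of intertwining operators on a finite set of $K$-types, or alternatively deform along the real line in the unramified direction and use the fact that the Hermitian form cannot change signature until a reducibility point is crossed. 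On the Hecke algebra side the same statement can be read off the corresponding tables via the equivalence $\Rep^{\mathfrak{s}}(G_2) \simeq \H^{\mathfrak{s}}\smod$ and the involution $D_{\H^{\mathfrak{s}}}$ from Theorem \ref{thm:KatoEn}, giving an independent cross-check.

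Finally, I would organize the result as a proposition listing, for each $\mathfrak{s}$ treated in Section \ref{sec:computationsG2}, the pairs $(\pi,\ND(\pi))$ of unitarizable irreducibles, and note the cases that remain conditional on \cite{HJLLZ2024}. The supercuspidal case is automatic since $\ND(\pi) = (-1)^{|S|}\pi$ by Section \ref{AZduality}(4) and irreducible cuspidals are preunitary; the fully induced irreducible principal series cases are automatic by Proposition \ref{prop:unitarizable}(3). Thus only the finitely many reducibility points require the signature analysis sketched above, and this is what makes the case-by-case verification terminate.
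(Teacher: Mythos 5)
Your overall plan --- run through the blocks of Section \ref{sec:computationsG2}, use the computed duals from Propositions \ref{prop:Duality1}--\ref{prop:Duality4Hecke} and the tables, certify discrete series and irreducible unitary inductions by Proposition \ref{prop:unitarizable}, and handle cuspidals via $\ND(\pi)=(-1)^{|S|}\pi$ --- is exactly the paper's structure. The difference, and it is the decisive one, is where you get unitarity of the non-tempered Langlands quotients $J_{\beta}(1,\pi(1,\chi))$, $J_{\beta}(1,\pi(\chi^{-1},\chi^{-1}))$, $J_{\alpha}(1/2,\delta(\chi))$, $J_{\beta}(1/2,\delta(\chi))$, $J_{\beta}(1,\pi(1,1))$, etc. You call this ``the main obstacle,'' propose to attack it with the signature/$K$-type techniques of \cite{HJLLZ2024} or a deformation argument, and explicitly allow some cases to ``remain conditional.'' That leaves the verification incomplete precisely at the point that carries all the content: without knowing which Langlands quotients are unitarizable, pairing rows of the tables proves nothing. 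The paper does not need any of this machinery, because Mui\'c already determined the full unitary dual of $p$-adic $G_2$: \cite[Theorem 5.1 (i), (ii), Corollary 5.1, Sections 5--6]{Muic1997} say exactly which of these quotients are unitarizable and which are not, and the paper's proof is nothing more than matching its computed duals $D_{G_2}(\pi)$ against that classification (it even checks, beyond what the conjecture requires, that the non-unitarizable representations such as $J_{\alpha}(3/2,\delta(1))$, $J_{\beta}(5/2,\delta(1))$ and the cubic-case $J_{\alpha}(1/2,\delta(\chi^{\pm 1}))$ pair with non-unitarizable duals). The role of \cite{HJLLZ2024} in the paper is only an acknowledgement of a possible alternative method, not an ingredient of the proof.

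Two smaller points. First, your reading of Proposition \ref{prop:unitarizable} silently adds a third clause about complementary series; the proposition as stated only gives (1) decomposition of unitarizable admissible representations, (2) square-integrable $\Rightarrow$ unitarizable, and (3) parabolic induction preserves unitarity, so any complementary-series or endpoint argument must be imported from \cite{Muic1997} (where it is indeed how Theorem 5.1 is proved), not from the proposition. Second, in the intermediate cases (Tables \ref{tab:1}, \ref{tab:2}) your appeal to ``the edge of the complementary series'' is fine in spirit, but the paper already records that $J(\sigma)$ is pre-unitary from the Shahidi/Aubert--Xu setup, so nothing new is needed there. If you replace your conditional signature analysis by direct citations to \cite[Theorem 5.1, Corollary 5.1]{Muic1997} for each quotient appearing in Propositions \ref{prop:Duality1}--\ref{prop:Duality4}, your argument becomes the paper's argument and is complete.
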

\begin{proof}
  The representation $\pi (\chi)$ in Case $(3)$ with $\chi$ ramified quadratic $\mathfrak{J}^{\mathfrak{s}} = \SO_4 (\mathbb{C})$, and the representation $\pi (\chi)$ in Case $(3)$ with $\chi$ ramified cubic $\mathfrak{J}^{\mathfrak{s}} = \SL_3 (\mathbb{C})$ in the block with $\mathfrak{s} = [T , \xi \otimes \xi] $; the representations $\pi (1)$ and $\pi' (1)$ in Case $(2)$ with $\chi =1$, $s =1/2$, $\mathfrak{J}^{\mathfrak{s}}  = G_2 (\mathbb{C})$, the representation $\St_{G_2}$ in Case $(2)$ with $s = 3/2$ and $\chi =1$, the representation $\pi (\chi)$ in Case $(3)$ with $\chi$ unramified quadratic $\mathfrak{J}^{\mathfrak{s}} = G_2 (\mathbb{C})$ and the representation $\pi (\chi)$ in Case $(3)$ with $s = 1/2$ and $\chi$ cubic $\mathfrak{J}^{\mathfrak{s}} = G_2 (\mathbb{C})$ in the block indexed by $\mathfrak{s} = [T,1]$, are unitarizable since they are square-integrable. Their images under the Aubert-Zelevinsky duality $D_{G_2}$ are $J_{\beta}(1, \pi (1, \chi))$ (\cite[Theorem 5.1 (ii)5]{Muic1997}),  $J_{\beta} (1, \pi (\chi^{-1}, \chi^{-1}))$ (\cite[Section 6]{Muic1997}), $J_{\alpha} (1/2, \delta (1))$ (\cite[Theorem 5.1 (ii)5]{Muic1997}), $J_{\beta} (1, \pi (1,1))$ (\cite[Corollary 5.1]{Muic1997}), $1_{G_2}$, $J_{\beta} (1, \pi (1, \chi))$ and $J_{\beta} (1, \pi (\chi^{-1}, \chi^{-1}))$ respectively, and are all unitarizable representations due to the reasons in the brackets.

  The representation $J_{\alpha} (1/2, \delta (\chi))$ in Case $(3)$ with $\chi$ ramified quadratic $\mathfrak{J}^{\mathfrak{s}} = \SO_4 (\mathbb{C})$, $\mathfrak{s} = [T, \xi \otimes \xi]$ is sent to $J_{\beta}(1/2, \delta(\chi))$ by  $D_{G_2}$. The representation $J_{\beta} (1/2, \delta (1))$ in Case $(2)$ with $\chi = 1$, $s = 1/2$, $\mathfrak{J}^{\mathfrak{s}} = G_{2} (\mathbb{C})$, $\mathfrak{s} = [T, 1]$ is mapped to itself. The representation $J_{\alpha} (1/2, \delta (\chi))$ in Case $(3)$ with $\chi$ unramified quadratic $\mathfrak{J}^{\mathfrak{s}} = G_{2} (\mathbb{C})$, $\mathfrak{s} = [T, 1]$ is mapped to $J_{\beta}(1/2, \delta (\chi))$. All these $6$ representations are unitarizable by \cite[Theorem 5.1 (i)]{Muic1997}.

  The following representations are not unitarizable. In the Bernstein block
  indexed by $\mathfrak{s}= [T, \xi \otimes \xi]$: $J_{\alpha} (1/2, \delta
  (\chi))$ and $D_{G_2}(J_{\alpha} (1/2, \delta
  (\chi))) = J_{\alpha} (1/2, \delta
  (\chi^{-1}))$ in Case $(3)$ with $\chi$ ramified cubic
  $\mathfrak{J}^{\mathfrak{s}} = \SL_3 (\mathbb{C})$ by \cite[Theorem 5.1 (i)]{Muic1997}.  In the Bernstein block
  indexed by $\mathfrak{s}= [T, 1]$: the representation $J_{\alpha} (3/2, \delta
  (1))$ and $D_{G_2}(J_{\alpha} (3/2, \delta
  (1))) = J_{\beta} (5/2, \delta
  (1))$ in Case $(2)$ with $s = 3/2$ and $\chi =1$ by \cite[Theorem 5.1 (i)]{Muic1997};  the representation
  $I_{\alpha} (\delta (\nu^{\pm 1/2}\xi_{2}))$ and $D_{G_2}(I_{\alpha} (\delta
  (\nu^{\pm 1/2}\xi_{2}))) = I_{\alpha} (\nu^{\pm 1/2}\xi_{2} \circ \det )$ in
  Case $(3)$ with $\xi_{2}$ unramified and  $\mathfrak{J}^{\mathfrak{s}} = G_2
  (\mathbb{C})$ by Proposition \ref{prop:unitarizable} $(3)$; $J_{\alpha} (1/2, \delta
  (\chi))$ and $D_{G_2}(J_{\alpha} (1/2, \delta
  (\chi))) = J_{\alpha} (1/2, \delta
  (\chi^{-1}))$ in Case $(3)$ with $s =1/2$, $\chi $ cubic
  $\mathfrak{J}^{\mathfrak{s}} = G_2 (\mathbb{C})$ by \cite[Theorem 5.1 (i)]{Muic1997}.
\end{proof}

\newcommand{\etalchar}[1]{$^{#1}$}

\end{document}